\newtheoremstyle{nonum}{}{}{\itshape}{}{\bfseries}{.}{ }{\thmnote{#3}}
\newtheorem{thm}{Theorem}[section]
\newtheorem*{thm*}{Theorem}
\newtheorem{cor}[thm]{Corollary}
\newtheorem{lem}[thm]{Lemma}
\newtheorem{prop}[thm]{Proposition}
\newtheorem{definition}[thm]{Definition}
\newtheorem*{definition*}{Definition}
\newtheorem{fact}[thm]{Fact}
\newcounter{examples}
\newtheorem{exm}[examples]{Example}
\newenvironment{manualtheorem}[1]{%
	\manualtheoreminner
}{\endmanualtheoreminner}
\newenvironment{manualdefinition}[1]{%
	\manualdefinitioninner
}{\endmanualtheoreminner}
\theoremstyle{nonum}
\theoremstyle{remark}
\newtheorem*{rems*}{Remarks}
\newtheorem{rem}[thm]{Remark}
\newcommand{\R}{\mathbb R}
\newcommand{\RR}{\mathbb R}
\def\K{{\mathcal K}}
\def\A{{\mathcal A}}
\def\L{{\mathcal L}}
\def\P{{\mathcal P}}
\newcommand{\dual}{^\circ}
\def\orqi{order reversing quasi involution }
\def\orqis{order reversing quasi involutions}
\def\BS{Blaschke-Santal\'{o}~}
\def\PL{Pr\'ekopa-Leindler~}
\def\sp{{\rm sp }}
\newcommand{\iprod}[2]{\langle #1,#2 \rangle} %
\def\C{\mathcal C}
\newcommand{\vol}{{\rm{Vol}}}
\def\epi{{\rm epi}\,}
\def\hypo{{\rm hypo}\,}
\def\dom{{\rm dom}\,}
\def\cvx{{\rm Cvx}}
\def\eps{{\varepsilon}}
\def\dom{{\rm dom}}
\def\conv{{\rm conv}}
\def\A{{\mathcal A}}
\def\R{\mathbb R}
\def\cvx{{\rm Cvx}}
\def\L{{\mathcal L}}
\def\S{{\mathcal S}}
\def \sp#1{\langle #1\rangle}
\def\l{\lambda}
\def\v{\varphi}
\def\implies{\;\;\Rightarrow\;\;}
\def\dom{\text{dom}}
\def\moverlay{\mathpalette\mov@rlay}
\def\mov@rlay#1#2{\leavevmode\vtop{%
		\baselineskip\z@skip \lineskiplimit-\maxdimen
		\ialign{\hfil$\m@th#1##$\hfil\cr#2\crcr}}}
\newcommand{\charfusion}[3][\mathord]{
	#1{\ifx#1\mathop\vphantom{#2}\fi
		\mathpalette\mov@rlay{#2\cr#3}
	}
	\ifx#1\mathop\expandafter\displaylimits\fi}
\begin{document}
\title{A zoo of dualities}

\author{S. Artstein-Avidan, S. Sadovsky, K. Wyczesany } %

	\begin{abstract}
		In this note we study \orqis~ and their properties. These maps are dualities (order reversing involutions) on their image. We prove that any \orqi is induced by a cost. Invariant sets of \orqis~ are of special interest and we provide several results regarding their existence and uniqueness. 
		We determine when an \orqi on a sub-class can be extended to the whole space and discuss the uniqueness of such an extension. We also provide several ways for constructing new \orqis~from given ones. In particular, we define the dual of an order reversing quasi involution. Finally, throughout the paper we exhibit a ``zoo'' of illustrative examples. Some of them are classical, some have recently attracted attention of the convexity community and some are new. We study in depth the new example of dual polarity and obtain a Blaschke-Santal\'o type inequality for a corresponding Gaussian volume product.
		The unified point of view on \orqis~ presented in this paper gives a deeper understanding of the underlying principles and structures, offering a new and exciting perspective on the topic, exposing many new research directions.  
	\end{abstract}

\maketitle

\section{Definitions and results} 

Duality theory in mathematics at large, and in geometry in particular, is a pervasive and important concept with manifestations in many areas. It has been a cornerstone in convexity theory, in which several characterization theorems have been proven (see, among others,  \cite{gruber1991endomorphisms, boroczky-schneider,slomka2011duality, florentin2012stability,segal2013duality,schneider2008endomorphisms, before,before-hidden, ArtsteinMilmanHidden,  iusem2015order,artstein-slomka2012order, artstein-florentin2012order}).  As we shall demonstrate, dualities show up in unexpected places,  fueling new research directions and affecting old ones.

In the context of this paper, a duality is defined as a mapping $T$ from a partially ordered set   $\C$   to itself that is order reversing and which satisfies $T\circ T = Id$ (where $Id$ is the identity operator).  In most cases $\C$ is a class of sets partially ordered by inclusion, that is, $\C \subseteq \P(X) = 2^X$.  

Of particular significance in geometry is the class of closed, convex sets in $\RR^n$ with the origin in the interior, denoted by ${\mathcal K}_0^n$,  together with the order given by inclusion.  %
Of equal importance are the following two classes of functions that play a central role in analysis and optimization: The class  $\cvx (\R^n)$ of lower semi-continuous convex functions from $\RR^n$ to $(-\infty, \infty]$,  with the order given by point-wise inequality (which corresponds to the inclusion of the epi-graphs of the functions) and the sub-class $\cvx _0 (\R^n)\subseteq \cvx (\R^n)$ of non-negative lower semi-continuous convex functions attaining value zero at the origin with the same order.  %
For each of these examples there exists a duality on the class (not necessarily unique),  which can be extended to be defined on a much larger class, yet on it,  it may no longer be an involution.  We call such a transform an \emph{order reversing quasi involution}:

\begin{definition}\label{def:set-duality}
	Let $X$ be a set, and let $\,T:\P(X)\to \P(X)$, where by $\P(X)$ we denote the power set of $X$.   We will say that $T$ is an  {\bf order reversing quasi involution} if for every $K,L\subseteq X$, the following hold
	\begin{enumerate}[(i)]
		\item $K\subseteq TTK$, \tabto{5.5cm} (quasi involution)
		\item if $L\subseteq K$ then $TK \subseteq TL$.  \tabto{5.5cm}  (order reversion)
	\end{enumerate}
	Given a family of sets $\C \subseteq \P(X)$ and $T:\C\to \C$ which satisfies (i) and (ii) for all $K,L \in \C$ we say that $T$ is  an {\bf order reversing quasi involution on $\C$.}
\end{definition}

It turns out that these abstract order reversing quasi involutions share many interesting structural properties, some of which are usually proven ad hoc in the literature  when an \orqi is discussed. Some of these properties stem from the fact, which is our first theorem in this note, that all \orqis{ } can be identified as a form of ``cost duality for sets'', introduced by the authors in \cite{paper2} in the context of optimal transport. To state the theorem, let us define the \orqi associated with a cost. 

\begin{definition}[Cost duality]\label{def:cost-duality}
	Let  $c:X\times X \to (-\infty,\infty]$ satisfy $c(x,y) = c(y,x)$.   
	For $K\subseteq X$ define the $c$-dual  set of $K$ as
	\[ K^c= \{ y\in X:\, \inf_{x\in K} c(x,y) \ge 0 \}.\]
\end{definition}

Our first main theorem reads as follows.

\begin{thm}\label{thm:duality-induced-by-cost}
	Let $T: \P(X) \to \P(X) $ be an order reversing quasi involution. Then  there exists a cost function $c:X\times X\to \{\pm 1\}$ such that   for all $K\subseteq X$, $TK = K^{c}$.
\end{thm}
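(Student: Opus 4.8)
The plan is to build the cost directly from the action of $T$ on singletons. Define $c:X\times X\to\{\pm1\}$ by setting $c(x,y)=1$ when $y\in T\{x\}$ and $c(x,y)=-1$ otherwise. The first task is to check that this $c$ is symmetric, as required by Definition~\ref{def:cost-duality}. This follows from the two defining properties of an \orqi: if $y\in T\{x\}$ then $\{y\}\subseteq T\{x\}$, so order reversion gives $TT\{x\}\subseteq T\{y\}$, and since $x\in\{x\}\subseteq TT\{x\}$ by the quasi-involution property we conclude $x\in T\{y\}$. Thus $y\in T\{x\}\iff x\in T\{y\}$, and $c(x,y)=c(y,x)$.

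With this $c$ fixed, unravel the definition of the $c$-dual: since $c$ takes only the values $\pm1$, the condition $\inf_{x\in K}c(x,y)\ge0$ is equivalent to $c(x,y)=1$ for every $x\in K$ (with the usual convention that the infimum over the empty set is $+\infty$). Hence
\[ K^c=\bigcap_{x\in K}\{y:\,y\in T\{x\}\}=\bigcap_{x\in K}T\{x\},\]
where an empty intersection is read as $X$. So the theorem reduces to the identity $TK=\bigcap_{x\in K}T\{x\}$ for every $K\subseteq X$; in words, every \orqi is determined by its values on singletons, and it acts on a general set by intersecting the duals of its points.

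To prove this identity, the inclusion $TK\subseteq\bigcap_{x\in K}T\{x\}$ is immediate: for each $x\in K$ we have $\{x\}\subseteq K$, so order reversion gives $TK\subseteq T\{x\}$. For the reverse inclusion, take $y\in\bigcap_{x\in K}T\{x\}$. Then $y\in T\{x\}$ for every $x\in K$, which by the symmetry established above means $x\in T\{y\}$ for every $x\in K$, i.e.\ $K\subseteq T\{y\}$. Applying order reversion to this inclusion yields $TT\{y\}\subseteq TK$, and since $y\in\{y\}\subseteq TT\{y\}$ by the quasi-involution property, we get $y\in TK$. This completes the identity and hence the theorem.

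The argument is short, and the only real subtlety is the reverse inclusion $\bigcap_{x\in K}T\{x\}\subseteq TK$: this is the step where the quasi-involution axiom $K\subseteq TTK$ is used in an essential way, since order reversion alone would only give the trivial inclusion. It is worth noting that the cost produced takes values in $\{\pm1\}$, so the statement is genuinely about ``yes/no'' costs; the edge cases $K=\emptyset$ (which forces $T\emptyset=X$) and $K$ a singleton are handled automatically by the conventions on empty infima and intersections, and can also be verified directly from the same two axioms.
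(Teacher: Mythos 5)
Your proof is correct and follows essentially the same route as the paper: the same cost $c(x,y)=1$ iff $y\in T\{x\}$, the same symmetry argument via order reversion plus $\{x\}\subseteq TT\{x\}$, and the same two inclusions (the paper packages the identity $TK=\bigcap_{x\in K}T\{x\}$ through its Proposition \ref{prop:intersection-and-dual}, but the underlying manipulations are identical to yours). No gaps.
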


A large portion of this paper is dedicated to the study of the inherent properties of  \orqis.  For example,  any \orqi is always an order reversing involution on its image and this image has a natural lattice structure, respected by the quasi involution. In this sense, an \orqi comes always together with its associated class of sets (closed under intersection). Sometimes, as in the case of the polarity transform on convex bodies, the \orqi is unique.

We accompany the general results by many examples of \orqis~ to highlight that one can obtain in this way a vast amount of classical families of sets as well as some less standard ones, which have received attention in recent years,  and also completely  new and interesting examples. 
This extensive array of examples shows how fundamental the notion of an \orqi is, and offers a unifying point of view which deepens our understanding of the underlying principles and structures.   For example, it allows for an integrated approach to the study of invariant sets of a given order reversing quasi involution. Invariant sets are of interest for multiple reasons; we give an example where the invariant sets of an \orqi are exactly the sets of equal width in $\RR^n$, a class studied in convexity with many open questions about it. Often invariant sets have extremal properties within a given family with respect to some functional. In the direction of invariant sets we prove,  for instance, the following theorem.

\begin{thm}\label{thm:inv-sets-with-intersection}
	Let $T:\P(X)\to \P(X)$ be an \orqi and let $X_0 = \{ x\in X: x\in T\{x\}\}$. Let $K_0\subseteq X$ satisfy $K_0 \subset TK_0$. Then there exists some $K \subseteq X$ with $K_0 \subseteq K$ and such that $TK \cap X_0 = K$. In particular, if $X=X_0$ then for any $x_0\in X$ there exists an invariant set $K$ with $x_0\in K$, namely $x_0 \in TK=K$. 
\end{thm}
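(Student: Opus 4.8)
The plan is to build $K$ as a fixed point of the operator $K \mapsto TK \cap X_0$, using the order‑reversing quasi involution structure together with a Knaster–Tarski / monotone‑iteration argument. First I would record the basic consequences of Definition~\ref{def:set-duality}: the map $K \mapsto TTK$ is order preserving and extensive ($K \subseteq TTK$), hence so is the map $\Phi(K) := TK \cap X_0$ composed with itself, but more usefully, $S := TT$ is a closure‑type operator and $T$ intertwines it with itself, so $T$ maps $S$‑closed sets to $S$‑closed sets. I would also note the elementary fact that $x \in X_0$ iff $x \in T\{x\}$ iff $\{x\} \subseteq T\{x\}$, and that by order reversion and (i) this is equivalent to $\{x\} \subseteq TTTx \cap \dots$, giving $X_0 = \bigcup\{ \{x\} : \{x\}\subseteq T\{x\}\}$; in particular every singleton contained in its own dual lies in $X_0$.

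The key step is the following. Starting from $K_0$ with $K_0 \subseteq TK_0$, I would first argue $K_0 \subseteq X_0$: indeed for $x \in K_0$, monotonicity gives $T K_0 \subseteq T\{x\}$, and since $x \in K_0 \subseteq TK_0 \subseteq T\{x\}$ we get $x \in X_0$. Hence $K_0 \subseteq TK_0 \cap X_0 =: K_1$. Now I claim the assignment $A \mapsto TA \cap X_0$ is order preserving on subsets of $X$ (it is a composition of the order‑\emph{reversing} $T$ with itself implicitly — here one must be careful: $TA\cap X_0$ is order reversing, not preserving). So instead I would iterate \emph{twice}: set $\Psi(A) = T(TA \cap X_0)\cap X_0$, which is order preserving, and satisfies $A \subseteq \Psi(A)$ whenever $A \subseteq X_0$ and $A \subseteq TA$ (using $A \subseteq TTA$ and monotonicity to insert the $X_0$ truncations). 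Then transfinite iteration $K_0 \subseteq \Psi(K_0)\subseteq \Psi^2(K_0)\subseteq\cdots$ stabilizes at some $K$ with $\Psi(K) = K$, i.e. $T(TK\cap X_0)\cap X_0 = K$. Writing $L = TK \cap X_0$, we have $TL \cap X_0 = K$; it remains to upgrade this to $TK \cap X_0 = K$. For this I would show $L = K$: since $K \subseteq TL\cap X_0 \subseteq TL$, order reversion gives $TTL \subseteq TK$, and combined with $L \subseteq TTL$ we get $L \subseteq TK$; intersecting with $X_0$ (and noting $L \subseteq X_0$) yields $L \subseteq TK \cap X_0$. Conversely $TK \cap X_0 \subseteq TK$; applying $T$ and using $L \subseteq TK$ one analogously sandwiches to conclude $TK \cap X_0 \subseteq L$, whence $L = K$ and therefore $TK \cap X_0 = TL \cap X_0 = K$, with $K_0 \subseteq K$ throughout.

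The main obstacle I anticipate is exactly this last bookkeeping — ensuring that the fixed point of the ``squared'' operator $\Psi$ is genuinely a fixed point of the single operator $K \mapsto TK\cap X_0$, and not merely a $2$‑periodic point. The order‑reversing nature of $T$ makes the naive monotone iteration fail, so the squaring trick (or an appeal to a min‑max / Abian–Brown type fixed point theorem for order‑reversing maps) is essential, and the verification that $L = TK\cap X_0$ collapses onto $K$ requires using the quasi‑involution inequality in both directions. The final ``in particular'' is then immediate: if $X = X_0$ then $\{x_0\}$ satisfies $\{x_0\}\subseteq T\{x_0\}$ with $\{x_0\} \ne T\{x_0\}$ allowed, apply the theorem with $K_0 = \{x_0\}$ (or note $\{x_0\}\subseteq T\{x_0\}$ directly when $x_0\in X_0$), obtaining an invariant $K \ni x_0$ with $TK = TK\cap X_0 = K$.
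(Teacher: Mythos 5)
Your reduction to a monotone operator via squaring is a natural idea, but the final step --- upgrading the fixed point of $\Psi(A)=T(TA\cap X_0)\cap X_0$ to a fixed point of $\Phi(A)=TA\cap X_0$ --- is exactly where the argument breaks, and you correctly flagged it as the danger point. Your verification is circular: writing $L=TK\cap X_0$, the two chains of inclusions you derive terminate at ``$L\subseteq TK\cap X_0$'' and ``$TK\cap X_0\subseteq L$'', but $TK\cap X_0$ \emph{is} $L$ by definition, so both statements reduce to $L\subseteq L$ and nothing forces $L=K$. The failure is not merely one of presentation: a genuine $2$-cycle can occur and the iteration can stall on it. Take $X=\{1,2,3\}$ with the symmetric cost $c(i,i)=c(1,2)=c(1,3)=1$ and $c(2,3)=-1$, so that $X_0=X$, $T\{1\}=X$, $T\{2\}=\{1,2\}$, $T\{3\}=\{1,3\}$. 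Starting from $K_0=\{1\}$ (which satisfies $K_0\subseteq TK_0=X$) one gets $\Phi(K_0)=X$ and $\Phi(X)=T\{1\}\cap T\{2\}\cap T\{3\}=\{1\}$, so $\Psi(K_0)=K_0$ and your iteration terminates immediately at $K=\{1\}$, for which $TK\cap X_0=X\neq K$. The theorem is still true in this example (both $\{1,2\}$ and $\{1,3\}$ are $T$-invariant and contain $K_0$), but the monotone iteration of $\Psi$ never reaches either of them.

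The paper proves the statement by a maximality argument instead: consider the family $\mathcal{S}$ of all $S$ with $K_0\subseteq S$ and $S\subseteq TS$ (equivalently, $c\ge 0$ on $S\times S$), check that the union of a chain in $\mathcal{S}$ again lies in $\mathcal{S}$, and apply Zorn's lemma to obtain a maximal $K\in\mathcal{S}$. Then $K\subseteq TK$ forces $K\subseteq X_0$ (your opening observation that $A\subseteq TA$ implies $A\subseteq X_0$ is exactly this point, and that part of your proposal is correct), hence $K\subseteq TK\cap X_0$; maximality gives the reverse inclusion, since for $z\in TK\setminus K$ the set $K\cup\{z\}$ fails to be in $\mathcal{S}$, and as $c(x,z)\ge 0$ for all $x\in K$ and $c\ge 0$ on $K\times K$, the only possible obstruction is $c(z,z)<0$, i.e.\ $z\notin X_0$. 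If you want to keep an iteration flavour, you would have to enlarge $K$ one admissible point at a time transfinitely (which is Zorn's lemma in disguise); iterating $\Psi$ does not work.
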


We also discuss \orqis~ defined on a sub-class $\C\subseteq \P(X)$, and determine necessary and sufficient conditions for such an \orqi to have an extension to $\P(X)$.  This is useful as the existence of an extension,  by means of Theorem \ref{thm:duality-induced-by-cost},  gives the cost representation of the transform and all the resulting properties. To state the theorem we need to define the condition of ``respecting inclusions'', which is a strengthening of order reversion. 

\begin{definition}
	Let $X$ be some set, $\C\subseteq \P(X)$ and  $T:\C \to \C$. 
	We say that $T$ \emph{respects inclusions} if  $K\subseteq  \cup_{i\in I} K_i$ implies $TK \supseteq\cap_{i\in I} TK_i $
	for any $K,K_i \in \C$, $i\in I$. 
\end{definition}
It is easy to see that a mapping respecting inclusions is also order reversing. It is also clear that if $T:\C \to \C$ is an \orqi on $\C$ and it has an extension to an \orqi on $\P(X)$ then $T$ must respect inclusions. Our  next theorem shows that this condition is not only necessary but is sufficient.  

\begin{thm}\label{thm:extending-general-class}
	Let $\C\subseteq \P(X)$ be a family of sets and $T:\C\to \C$ be an \orqi on $\C$ which respects inclusions. Then $T$ can be extended to an \orqi $\hat{T}:\P (X)\to \P(X)$, such that $\hat{T}|_\C =T$. 
\end{thm}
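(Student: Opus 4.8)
The plan is to construct the extension explicitly via the cost-duality machinery, mimicking the proof of Theorem \ref{thm:duality-induced-by-cost}. First I would define a candidate cost $c:X\times X\to\{\pm1\}$ by setting $c(x,y)=1$ if there exists some $K\in\C$ with $x\in K$ and $y\in TK$ (equivalently, if $y\in TK$ for every $K\in\C$ containing $x$, once we check consistency), and $c(x,y)=-1$ otherwise; then define $\hat T M = M^c$ for all $M\in\P(X)$. By construction $\hat T$ is of the form $M\mapsto M^c$ with a symmetric $\{\pm1\}$-valued cost, so the general properties of cost dualities (order reversion and the quasi-involution inequality $M\subseteq M^{cc}$, which hold for any symmetric cost) give that $\hat T$ is an \orqi on $\P(X)$ automatically. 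The whole content is therefore (a) checking that $c$ is well-defined and symmetric, and (b) checking that $\hat T|_\C = T$.

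For symmetry of $c$: if $x\in K$ and $y\in TK$ for some $K\in\C$, I want to produce $L\in\C$ with $y\in L$ and $x\in TL$. The natural choice is $L=TK$; this lies in $\C$ since $T:\C\to\C$, and $y\in TK=L$ by assumption, so I need $x\in TTK$ — which follows from $x\in K\subseteq TTK$ by the quasi-involution property of $T$ on $\C$. So symmetry is immediate. For $\hat T|_\C=T$: fix $K\in\C$. The inclusion $TK\subseteq \hat T K = K^c$ is easy — if $y\in TK$ then for every $x\in K$ we have (by definition, witnessed by $K$ itself) $c(x,y)=1\ge0$, so $\inf_{x\in K}c(x,y)\ge0$, i.e. $y\in K^c$. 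The reverse inclusion $K^c\subseteq TK$ is where ``respects inclusions'' enters. Suppose $y\in K^c$, i.e. $c(x,y)=1$ for all $x\in K$; this means that for each $x\in K$ there is some $K_x\in\C$ with $x\in K_x$ and $y\in TK_x$. Then $K\subseteq\bigcup_{x\in K}K_x$, so by respecting inclusions $TK\supseteq\bigcap_{x\in K}TK_x\ni y$, giving $y\in TK$ as desired.

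The main obstacle — the only real subtlety — is precisely this last step: without the ``respects inclusions'' hypothesis, knowing $y\in TK_x$ for a family of ``small'' sets $K_x$ covering $K$ does not let one conclude $y\in TK$, and indeed the extension can fail, so the hypothesis is being used in an essential way exactly here. Everything else is bookkeeping: verifying that $M\mapsto M^c$ satisfies Definition \ref{def:set-duality}(i)–(ii) for an arbitrary symmetric cost is routine (order reversion is monotonicity of $\inf$ under enlarging the index set; the quasi-involution inequality $M\subseteq M^{cc}$ holds because for $x\in M$ and any $y\in M^c$ one has $c(x,y)=c(y,x)\ge0$, so $x\in (M^c)^c$). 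I would also remark that one may equivalently phrase the definition of $c$ as $c(x,y)=1 \iff y\in\bigcap_{K\in\C,\,x\in K}TK$ and note that when $x$ belongs to no member of $\C$ the intersection is all of $X$, which causes no problem; this is the same phenomenon already handled in the proof of Theorem \ref{thm:duality-induced-by-cost}, so the argument here is a mild generalization of that one.
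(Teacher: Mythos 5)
Your construction is essentially identical to the paper's: the paper defines the cost via the set $S=\cup\{TK\times TTK:\,K\in\C\}$, which coincides with your $\cup\{K\times TK:\,K\in\C\}$ because $K\subseteq TTK$ and $TTTK=TK$, and the key step --- covering $K$ by the witnesses $K_x$ and invoking ``respects inclusions'' to get $y\in\cap_{x\in K}TK_x\subseteq TK$ --- is exactly the paper's argument. One caveat: your parenthetical claim that ``there exists $K\in\C$ with $x\in K$ and $y\in TK$'' is equivalent to ``$y\in TK$ for every $K\in\C$ containing $x$'' is false in general (for polarity on $\K_0^n$, take $x\in S^{n-1}$, $K_1=[0,x]$, $K_2=B_2^n$, and $y\perp x$ with $|y|$ large: then $y\in TK_1$ but $y\notin TK_2$), and the universal version would not reproduce $T$ on $\C$; since your actual argument only ever uses the existential form, this side remark does not affect correctness, but it should be deleted.
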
  
When the class $\C$ is closed under intersections, any \orqi $T$ on it automatically respects inclusions (see Lemma \ref{lem:extending}).

As an outcome of this line of study, we present ways in which one can build new \orqis~ from existing ones. We explain how to intersect \orqis, and in some cases how to average them. We express some known \orqis~ as intersections of simpler ones. Finally, we show how to construct the \emph{dual} of an order reversing quasi involution, which results in a new quasi involution, as described   in Definition \ref{def:dualorqi}. These methods are illustrated by examples in Section \ref{sec:Zoo}.

Notably, by taking the dual \orqi of the classical polarity transform from convexity, we find a very natural and basic concept which has been hiding out of sight for a long time. In  Section \ref{sec:dual-duality} we present a thorough study of the dual polarity, which turns out to be particularly curious due to its many links with the classical polarity and with other known operations in convexity.  We analyze in detail the ``dual polarity'' given by the cost function $c(x,y)=\sp{x,y}-1$, which enables us to  prove the following theorem.

\begin{thm}\label{thm:newBStypeMAIN1}
	Let $K\subset \RR^{n}$ be such that for some $e\in S^{n-1}=\{x\in \R^{n}: |x|=1\}$ we have that $x+t e \in K$, $x\in e^{\perp}$ implies 
	$-x+te \in K$,  where $|\cdot|$ denotes the Euclidean norm (we then say $K$ is even with respect to this coordinate system). Let ${T: \P(\RR^{n})\to \P(\RR^{n})}$ be given by $TK = \{ x\in \R^n: \iprod{x}{y}\ge 1 \ \forall y \in K\}$. Then 	
	\[ \gamma_{n} (K)\gamma_{n}(TK) \le \gamma_{n}(K_0)^2\]
	where $K_0 = \{ (x,t)\in \RR^{n-1}\times \RR^+: |x|^2 + 1 \le t^2\}$, and $\gamma_{n}$ is the Gaussian measure on $\,\R^{n}$.
\end{thm}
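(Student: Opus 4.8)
The plan is a symmetrisation argument that reduces $K$ to the symmetry class of the extremiser $K_0$ — rotationally symmetric about the axis $\R e$ and an ``upper set'' in the $e$--direction — followed by an explicit one--dimensional estimate.

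\emph{Reductions.} If $0\in K$ then $TK=\emptyset$ and the inequality is trivial, so assume $0\notin K$ and $TK\neq\emptyset$. Take coordinates with $e=e_n$ and write $x=(x',x_n)\in\R^{n-1}\times\R$; the hypothesis says $K$ is invariant under $\sigma(x',x_n)=(-x',x_n)$, and so is $TK$ (since $T$ commutes with the orthogonal map $\sigma$). If $y=(y',y_n)\in TK$, averaging $\langle y,\cdot\rangle\ge1$ and $\langle\sigma y,\cdot\rangle\ge1$ over $K$ gives $(0,y_n)\in TK$, hence $y_n\,p_n\ge1$ for every $(p',p_n)\in K$; running the symmetric statement, $K$ and $TK$ lie in a common coordinate half--space, so (reflecting in $\{x_n=0\}$ if necessary, which preserves $\gamma_n$ and only mirrors $K_0$) we may assume $K,TK\subseteq\{x_n>0\}$. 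Since $TK$ is always convex and closed and $T$ is an involution on its image, replacing $K$ by $TTK\supseteq K$ does not decrease $\gamma_n(K)\gamma_n(TK)$ and keeps the $\sigma$--symmetry, so we may also assume $K$ closed, convex, $K=TTK$. As an intersection of half--spaces $\{y:\langle x',y'\rangle+x_ny_n\ge1\}$ with $x_n>0$, such a $K$ is an upper set in the last coordinate: $K=\{(u,s):s\ge\beta(u)\}$ with $\beta:\R^{n-1}\to(0,\infty]$ convex and $\sigma$--invariant. A short Legendre computation gives $TK=\{(z,w):w>0,\ w\,\beta^{*}(-z/w)\le-1\}$, and for $\beta_0(u)=\sqrt{1+|u|^2}$, whose conjugate is $\beta_0^{*}(\zeta)=-\sqrt{1-|\zeta|^2}$ on the unit ball, this returns $TK_0=K_0$ — confirming $K_0$ is $T$--invariant.

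\emph{Symmetrisation.} For each level $s$, the slice $K_s=\{u:\beta(u)\le s\}$ is a centrally symmetric convex subset of $\R^{n-1}$; replace it by the centred Euclidean ball of the same $(n-1)$--dimensional Gaussian measure, obtaining a set $\widetilde K$ rotationally symmetric about $\R e$, with the same $\gamma_n$ by Fubini; re--applying $TT$ afterwards restores convexity and the upper--set property while keeping rotational symmetry. The crux — and the step I expect to be hardest — is the Meyer--Pajor type monotonicity that $\gamma_n(TK)$ does not decrease under this symmetrisation: with $r(s)$ the radius of the symmetrised slice one has $T\widetilde K=\{(z,w):\inf_s\big(ws-r(s)|z|\big)\ge1\}$, so this amounts to the assertion that among centrally symmetric bodies in $\R^{n-1}$ of given Gaussian measure the centred ball is the optimal shape for the relevant ``Gaussian dual--volume'' functional — the Gaussian analogue of the symmetric Blaschke--Santal\'o fact behind the classical Steiner--symmetrisation proof, to be established by a shadow--system argument rather than by a set inclusion (which fails). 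Iterating, we are reduced to $K=\{(u,s):s\ge b(|u|)\}$ with $b:[0,\infty)\to(0,\infty)$ convex and nondecreasing.

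\emph{The radial case.} Now both $\gamma_n(K)$ and, via the Legendre description, $\gamma_n(TK)$ are explicit one--variable functionals of the profile $b$, and maximising their product — after a further Ehrhard--type rearrangement in the radial variable — comes down to the inequality
\[
\Big(\int_a^{\infty}e^{-t^2/2}\,dt\Big)\Big(\int_{1/a}^{\infty}e^{-t^2/2}\,dt\Big)\ \le\ \Big(\int_1^{\infty}e^{-t^2/2}\,dt\Big)^{2},\qquad a>0,
\]
which holds because $r\mapsto\log\int_{e^{r}}^{\infty}e^{-t^2/2}\,dt$ is concave — equivalently $x\mapsto x\,e^{-x^2/2}\big/\int_x^{\infty}e^{-t^2/2}\,dt$ is nondecreasing on $(0,\infty)$, a short Mills--ratio computation. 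Tracking the equality cases back through the symmetrisations pins down $K_0$ as the unique extremiser. (Equivalently, one may first apply the projective change of variables $(u,s)\mapsto(u/s,1/s)$, which turns $T$ into ``polarity intersected with the upper half--space'', sends $K_0$ to the half--ball, and recasts the theorem as a weighted Blaschke--Santal\'o inequality on $\{x_n>0\}$; the symmetrisation above is the natural route to that statement.)
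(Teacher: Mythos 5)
Your reductions and the Legendre computation of $TK$ from the profile $\beta$ are correct, and you have correctly identified the extremiser, but the argument has a genuine gap exactly where you flag it: the claim that $\gamma_n(T\widetilde K)\ge\gamma_n(TK)$ when each slice $K_s$ is replaced by a centred ball of equal $\gamma_{n-1}$-measure is asserted, not proved. This is not a routine step that can be deferred to ``a shadow-system argument'': the slices of $TK$ are not determined slice-by-slice from the slices of $K$ (the constraint $\inf_s(ws-r(s)|z|)\ge1$ couples all levels), so the monotonicity of the functional $K\mapsto\gamma_n(TK)$ under this Gaussian rearrangement is itself a Blaschke--Santal\'o-type statement of essentially the same depth as the theorem. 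The subsequent ``Ehrhard-type rearrangement in the radial variable'' reducing the rotationally symmetric case to the one-dimensional inequality $\Phi(a)\Phi(1/a)\le\Phi(1)^2$ is likewise only named, not carried out, and it is not clear that the $n$-dimensional radial problem collapses to that scalar inequality. So as written the proof is incomplete at its two load-bearing steps.

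For comparison, the paper proceeds differently and avoids any symmetrisation. It conjugates $T$ by the projective map $J(x,s)=(x/s,1/s)$ (your parenthetical remark at the end), under which $T$ becomes $L\mapsto -L^\circ$ on centrally symmetric convex bodies $L$ squeezed between $[-e_n,e_n]$ and the slab $\{|\langle\cdot,e_n\rangle|\le1\}$, and $\gamma_n$ pushes forward to an explicit measure $\nu$ with density $(2\pi)^{-n}e^{-|x|^2/2z^2}e^{-1/2z^2}z^{-(n+1)}$. Writing $\nu(L)$, $\nu(L^\circ)$, $\nu(B_2^n)$ as integrals over the height variable of slice functions $f,g,h$, the key pointwise inequality $f(s)g(t)\le h^2(\sqrt{st})$ is verified using the fact that the slices of $L$ at height $s$ and of $L^\circ$ at height $t$ are (after dilation by $\sqrt{st/(1-st)}$) polar to one another, so that Cordero-Erausquin's Gaussian Blaschke--Santal\'o inequality in dimension $n-1$ applies; the multiplicative Pr\'ekopa--Leindler inequality then multiplies the slices together. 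In effect, the $(n-1)$-dimensional Gaussian Blaschke--Santal\'o inequality is used as a black box on each pair of slices, and Pr\'ekopa--Leindler replaces your symmetrisation. If you want to salvage your route, the cleanest fix is to abandon the slice-wise rearrangement and instead prove the coupled slice inequality directly, which is precisely what the paper does.
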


\begin{rem}
	In Section \ref{sec:dual-duality} we analyze $T$ in depth, and, in particular, show that the image of this transform are ``cone-like'' sets (closed convex sets satisfying $\lambda K \subseteq K$ for $\lambda \ge 1$),   that $K_0 = TK_0$, and that $K_0$ is, up to rotation, the only $T$-invariant set  which is even with respect to some coordinate system.  
\end{rem}

\textbf{The paper is organized as follows:} In Section \ref{sec:general-properties} we investigate some basic properties of \orqis. In Section \ref{sec:cost-dualities} we study cost dualities and prove Theorem \ref{thm:duality-induced-by-cost}. In Section \ref{sec:Zoo} we invite the reader into the ``zoo of examples'', where we describe a first selection of illustrative examples that serve as a motivation for further analysis of the general cost transforms. In Section \ref{sec:dual-duality} we discuss in detail one of these examples, which we call ``dual polarity'', together with some new \BS type inequality. 
In Section \ref{sec:fixed-points} we  address the question of fixed points of the transform $K\mapsto K^c$. 
In Section \ref{sec:another-ex} we provide a second collection of  examples, and finally in Section \ref{sec:some additional} we return to the general theory of \orqis~ and describe several  constructions and properties, in particular, the possibility of extension of \orqis, their composition and conjugation, and restriction to a sub-class or a subset.  

\subsection*{Acknowledgments}

The authors were supported in part by the 
European Research Council (ERC) under the European Union’s Horizon 2020
research and innovation programme (grant agreement No 770127). The first named author was supported in part by ISF grant no. 784/20. The second named author is grateful to the Azrieli foundation
for the award of an Azrieli fellowship.

\section{General properties of quasi involutions on sets} \label{sec:general-properties}

We begin by gathering a few simple facts about \orqis .  First, every order reversing quasi involution gives rise to an  order reversing involution on a subset of $\P(X)$. 

\begin{lem}\label{lem:orqi-is-duality-on-image}
	\sloppy Let $T:\P(X)\to \P(X)$ be an order reversing quasi involution and let $~{\C = {\rm Im}(T)= \{ TK: K\subseteq X\} \subseteq \P(X)}$. Then $T:\C \to \C$ is an   order reversing involution. 	
\end{lem}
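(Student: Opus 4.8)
The plan is to verify three things: that $T$ maps $\C$ into $\C$, that $T$ restricted to $\C$ is an involution, and that it is order reversing on $\C$ (the last being immediate from the hypothesis). The conceptual heart is the standard "closure operator" argument: for a quasi involution, applying $T$ three times is the same as applying it once.

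First I would record the key identity $TTTK = TK$ for every $K \subseteq X$. One inclusion, $TK \subseteq TTT K$, is just property (i) of Definition \ref{def:set-duality} applied to the set $TK$ in place of $K$. For the reverse inclusion, start from (i) in the form $K \subseteq TTK$; since $T$ is order reversing (property (ii)), applying $T$ to this inclusion reverses it, giving $T(TTK) \subseteq TK$, i.e. $TTTK \subseteq TK$. Combining the two inclusions yields $TTTK = TK$.

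From this identity everything follows quickly. An arbitrary element of $\C = \mathrm{Im}(T)$ has the form $TK$ for some $K \subseteq X$; then $T(TK) = TTK = T(TTK)$ by the identity (reading $TTK = TTTK$... more directly: $TTK$ is the image under $T$ of $TK \in \P(X)$, hence $TTK \in \C$), so $T$ maps $\C$ into $\C$. For the involution property, take $L \in \C$, say $L = TK$; then $TTL = TTTK = TK = L$, using the identity $TTTK = TK$. So $T \circ T = \mathrm{Id}$ on $\C$. Finally, order reversion on $\C$ is inherited verbatim from property (ii), which holds for all subsets of $X$ and in particular for all members of $\C$. Hence $T : \C \to \C$ is an order reversing involution.

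I do not anticipate a genuine obstacle here; the only point requiring a little care is making sure the three-fold identity is derived using only (i) and (ii) and not some unstated closure property, and being explicit that "$TK \in \C$" together with the identity is what gives both well-definedness of the restriction and the involution property. This is routine, so I would keep the write-up short.
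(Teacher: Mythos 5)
Your proof is correct and follows essentially the same route as the paper: both establish the key identity $TTTK=TK$ by applying the quasi involution property to $TK$ for one inclusion and by applying $T$ to $K\subseteq TTK$ with order reversion for the other. (Your version is in fact slightly more careful than the paper's, which has a small slip in attributing the inclusion $TK\subseteq TTTK$ to "order reversion'' rather than to property (i).)
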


\begin{proof}
	The fact that $T$ reverses order on $\C$ is inherited from the same property on $\P(X)$.  As for the involution property, it amounts to showing $TTTK = TK$ for any $K\subseteq X$. Using the quasi involution property on $K$, we see that $TTK \supseteq K$, so applying $T$ and using order reversion we get $TTTK \subseteq TK$. On the other hand, we can use the order reversion property on $TK$ instead, which yields, $TK\subseteq TTTK$, and finishes the proof.
\end{proof}

Let us remark that the direction of the inclusion in the quasi involution property $(i)$ is not significant and can be reversed, as there is a one-to-one correspondence between \orqis~ as given by Definition \ref{def:set-duality}, and those defined with the reverse inclusion (see Lemma \ref{lemma:switch-direction}).

A useful property of an order reversing quasi involution $T$ defined on all the subsets of the space $X$ is that it respects the lattice structure, with $\wedge$ corresponding to intersection and $K \vee L$ corresponding to $TT(K\cup L)$, in the sense that $T$ maps unions to intersections (for more on lattice theory, see e.g. \cite{Davey}).  Also, the set $X$ is always the maximal element in the image lattice, and $TX$ (which may or may not be the empty set) is the minimal element in the image lattice. These properties are captured in the following simple proposition, which, nevertheless, will be of much use to us in what follows.

\begin{prop}\label{prop:intersection-and-dual}
	Let $T:\P(X)\to \P(X)$ be an order reversing quasi involution. Then $TTX = X$ and $T\emptyset = X$ and for any collection of sets $K_i\subseteq X$,  $i\in I$,
	\begin{equation}\label{eq:lattice}
	T\left(\cup_{i\in I}K_i\right) = \cap_{i\in I} T(K_i).  
	\end{equation}
\end{prop}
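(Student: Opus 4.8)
The plan is to prove the three assertions in turn, deriving each from the defining properties (i) and (ii) of an order reversing quasi involution together with Lemma \ref{lem:orqi-is-duality-on-image}.

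First I would handle the identity $TTX = X$. By property (i) applied to $K = X$ we have $X \subseteq TTX$, and since $TTX \subseteq X$ trivially (everything is a subset of $X$), equality follows. Next, for $T\emptyset = X$: since $\emptyset \subseteq K$ for every $K \subseteq X$, order reversion (ii) gives $TK \subseteq T\emptyset$ for all $K$; taking $K = X$ and using $TTX = X$ — or more directly taking $K$ ranging over all sets — we get that $T\emptyset$ contains $TK$ for every $K$ in the image, in particular $T\emptyset \supseteq TTX = X$, hence $T\emptyset = X$. (Alternatively, $T\emptyset \supseteq TX$ and applying $T$ again, etc.; the cleanest route is $X = TTX \subseteq T\emptyset \subseteq X$.)

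The main content is the lattice identity \eqref{eq:lattice}. For the inclusion $T(\cup_i K_i) \subseteq \cap_i T(K_i)$: each $K_j \subseteq \cup_i K_i$, so by order reversion (ii), $T(\cup_i K_i) \subseteq T(K_j)$ for every $j \in I$, and intersecting over $j$ gives the claim. The reverse inclusion $\cap_i T(K_i) \subseteq T(\cup_i K_i)$ is the part requiring a genuine idea. I would argue as follows: for each $i$, property (i) gives $K_i \subseteq TT(K_i)$; now $T(K_i) \supseteq \cap_j T(K_j)$, so applying $T$ and order reversion, $TT(K_i) \subseteq T(\cap_j T(K_j))$. Chaining, $K_i \subseteq T(\cap_j T(K_j))$ for every $i$, hence $\cup_i K_i \subseteq T(\cap_j T(K_j))$. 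Applying $T$ once more and using order reversion, $T(\cap_j T(K_j)) \supseteq TT(\cup_i K_i) \supseteq \cup_i K_i$ — wait, that only re-derives what we had. Instead, from $\cup_i K_i \subseteq T(\cap_j T(K_j))$ apply $T$: $T(\cup_i K_i) \supseteq TT(\cap_j T(K_j)) \supseteq \cap_j T(K_j)$, where the last step is property (i) applied to the set $\cap_j T(K_j)$. This is exactly the desired inclusion.

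The step I expect to be the main obstacle is precisely this reverse inclusion: one must resist the temptation to use an involution identity that only holds on the image, and instead carefully apply (i) to the right set (namely $\cap_j T(K_j)$, not to some $K_i$) and combine it with two rounds of order reversion. Everything else is formal manipulation of inclusions. Once the chain $\cup_i K_i \subseteq T(\cap_j T(K_j))$ is in hand, a single application of $T$ followed by one use of the quasi involution property closes the argument.
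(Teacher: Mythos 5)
Your proof is correct and follows essentially the same route as the paper's: the identities $TTX=X$ and $T\emptyset=X$ are obtained exactly as in the paper (via $\emptyset\subseteq TX$ and order reversion), and the key reverse inclusion in \eqref{eq:lattice} is established by the same chain $K_j\subseteq TT(K_j)\subseteq T(\cap_i TK_i)$, then applying $T$ and using property (i) on $\cap_i TK_i$. The only cosmetic issue is the acknowledged false start before the final step, which you correctly discard.
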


\begin{proof}
	The first two properties are straightforward. Indeed, $TTX \supseteq X$ by the quasi involution condition, so $TTX = X$. Letting $L_0 = TX$, since $\emptyset\subseteq L_0$ we have by order reversion that $T\emptyset \supseteq TL_0 = X$ and so $T\emptyset  = X$. 	  
	
	To show the property \eqref{eq:lattice},  we start by considering	$K_j \subseteq \cup_{i\in I}K_i$ for all $j\in I$. Then by the order reversion property (i),    $T(\cup_{i\in I}K_i) \subseteq T(K_j)$ for any $j$, and, in particular,  $T(\cup_{i\in I}K_i) \subseteq \cap_{j\in I}T(K_j)$.
	
	For the other direction, $\cap_{i\in I} T(K_i) \subseteq T(K_j)$ for any $j\in I$. Applying property (ii) again, $TT(K_j) \subseteq T(\cap_{i\in I} TK_i )$. Using property (i), this implies  $K_j \subseteq T(\cap_{i\in I} TK_i )$, so in particular $\cup_{i\in I}K_i  \subseteq T(\cap_{i\in I} TK_i )$. Applying $T$ duality once more 
	\[ T(\cup_{i\in I}K_i)  \supseteq TT(\cap_{i\in I} TK_i ) \supseteq \cap_{i\in I} TK_i,  \] 
	where for the rightmost inclusion we used property $(i)$ once more.
\end{proof}

\begin{rem}
	It is useful to note that one may repeat the same proof  in the case 
	where  $T$ is defined only on a sub-class $\C\subseteq \P(X)$, and is an  order reversing quasi involution  on ${\C}$. We then get that property  \eqref{eq:lattice} still holds whenever 	$\cup_{i\in I}K_i \in {\C}$.  	\\ 
	In Section 
	\ref{sec:some additional} we discuss \orqis~defined on sub-classes and show that   often one can extend them to be defined on the whole $\P(X)$ (e.g. when  $\C$ is closed under intersections and $X\in \C$, see    Lemma \ref{lem:extending}). 
\end{rem}
It will be useful to note that for any set $K\subseteq X$, the set $TTK$ is the ``envelope'' of $K$, namely the smallest set in the image of $T$ which contains $K$.

\begin{prop}\label{prop:TTKisenvelope}
	Let $T:\P(X)\to \P(X)$ be an order reversing quasi involution, and let $K\subseteq X$. Then 
	\[  TTK = \cap \{ L: L \supseteq K \quad{\rm and}\quad L = TTL\}. \]	
\end{prop}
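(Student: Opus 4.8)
The plan is to show the right-hand side, call it $M = \cap \{ L: L \supseteq K \text{ and } L = TTL\}$, equals $TTK$ by proving mutual inclusion. First I would observe that $TTK$ is one of the sets appearing in the intersection defining $M$: indeed $TTK \supseteq K$ by the quasi involution property (i), and $TT(TTK) = TTK$ because, by Lemma \ref{lem:orqi-is-duality-on-image}, $T$ is an involution on its image $\C = \mathrm{Im}(T)$ (here $TTK = T(TK) \in \C$, and applying the involution property $TTTK = TK$ once more gives $TT(TTK) = T(TTTK) = T(TK) = TTK$). Since $TTK$ is a member of the family being intersected, we immediately get $M \subseteq TTK$.

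For the reverse inclusion $TTK \subseteq M$, I would take an arbitrary $L$ with $K \subseteq L$ and $L = TTL$ and show $TTK \subseteq L$. Applying $T$ to $K \subseteq L$ and using order reversion (ii) gives $TL \subseteq TK$; applying $T$ again gives $TTK \subseteq TTL = L$. Since $L$ was an arbitrary member of the family, $TTK$ is contained in every set in the intersection, hence $TTK \subseteq M$. Combining the two inclusions yields $TTK = M$, which is the claim; the phrase ``smallest set in the image of $T$ containing $K$'' is then justified because the sets $L$ with $L = TTL$ are precisely the sets in $\mathrm{Im}(T)$ (one direction is the defining equation, the other is that any $TK'$ satisfies $TT(TK') = TK'$ by the involution-on-image property).

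I do not anticipate a genuine obstacle here: the statement is a direct consequence of Lemma \ref{lem:orqi-is-duality-on-image} together with monotonicity, and both inclusions are one-line applications of properties (i) and (ii). The only point requiring a moment's care is the characterization of $\mathrm{Im}(T)$ as the fixed points of $TT$, i.e. verifying that $L \in \mathrm{Im}(T) \iff L = TTL$, so that the intersection in the statement really ranges over all image sets containing $K$ and the minimality interpretation is correct. This is exactly the content of Lemma \ref{lem:orqi-is-duality-on-image} applied in both directions, so it can be invoked without further work.
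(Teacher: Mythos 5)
Your proof is correct and follows essentially the same route as the paper: both inclusions are obtained exactly as in the paper's argument, by noting that $TTK$ itself participates in the intersection and that order reversion applied twice gives $TTK\subseteq TTL=L$ for every $L$ in the family. Your explicit verification that $TT(TTK)=TTK$ via Lemma \ref{lem:orqi-is-duality-on-image} is a small point the paper leaves implicit, but it is not a different approach.
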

\begin{proof}
	For any $L = TTL$ with $K\subseteq L$ we have $TK \supseteq TL$ an thus $TTK \subseteq TTL = L$. Therefore, 
	letting $ L_0$ stand for the intersection 	$L_0 = \cap \{ L: L \supseteq K \quad{\rm and}\quad L = TTL\}$ we have $TTK \subseteq L_0$. On the other hand, letting $L = TTK$, we know $K \subseteq L$ and thus $L$ participates in the intersection, which means $L_0 \subseteq L = TTK$ and the proof is complete.  
\end{proof}

\section{Cost dualities}\label{sec:cost-dualities}

A natural source for \orqis, that emerged from authors' previous work \cite{paper2} on optimal transport,   are \emph{cost transforms for sets}.  We recall the definition given in the introduction,  though we include an extra parameter.

\begin{manualdefinition}{1.2}
	Let  $c:X\times X \to (-\infty,\infty]$ satisfy $c(x,y) = c(y,x)$.
	Fix $t\in (-\infty, \infty]$ (which will be omitted in the notation as it is a fixed parameter).   
	For $K\subseteq X$ define the $c$-dual  set of $K$ as
	\[ K^c =\bigcap _{x\in K} \{ y\in X:\, c(x,y)\ge t \}= \{ y\in X:\, \inf_{x\in K} c(x,y) \ge t \}.\]
\end{manualdefinition}

Let us point out (see also Example \ref{ex:polar}) that in the case of the classical cost $c(x,y) = -\iprod{x}{y}$ and $t = -1$, we get the well known polar set $K^\circ$.  Polarity can also be represented by the so-called \emph{polar cost} given by $c(x,y)=-\ln (\sp{x,y}-1)_+$ and $t = \infty$. This cost was introduced in \cite{hila} and studied in \cite{kasia-thesis, paper2}, and is linked with the Polarity transform $\A$ (see Example \ref{exm:A-transf}).  
The classical cost $c(x,y) = -\iprod{x}{y}$ and $t = -5$ gives $K\mapsto 5K^{\circ}$. With $t =  1$, and $c(x,y)=\iprod{x}{y}$ however, we get 
\[K \mapsto \{x: \forall x\in K\,,\, \iprod{x}{y} \ge 1\} \] 
which is very different from polarity, and which we discuss in depth in Section \ref{sec:dual-duality}. We will see many other examples of cost dualities in Section \ref{sec:Zoo}. 

We remark that the notion of $c$-duality may be generalized further.  One may associate with a cost function $c:X\times Y \to (-\infty,\infty]$ the corresponding transform from $\P(X)$ to $\P(Y)$, and its counterpart, from $\P(Y)$ to $\P(X)$.  For simplicity we restrict here to the case $X = Y$ and symmetric $c$.     

Note that very little of the information given by $c$ is used for the definition of the transform.  The choice of $t$ is immaterial as we may exchange $c$ with $c -t$ and take $t=0$, or, if $t = \infty$, we can replace $c$ by $-\exp(-c)$ with $t=0$.  Moreover, two costs $c_1, c_2$, with parameters $t_1, t_2$, produce the same transform if and only if 
\[\{ (x,y): c_1(x,y)\ge t_1 \}  = \{ (x,y): c_2(x,y)\ge t_2 \}.\]  
In other words, the transform is actually defined by this subset of $X\times X$, and the main reason to keep in mind the cost function and the parameter $t$ is that for a fixed cost function and a varying parameter $t$ we get a family of transforms which is sometimes of interest to study as a whole. In what follows, when $t$ is not specified we always mean $t=0$.

\begin{lem} Fix a set $X$ and a cost $c(x,y) = c(y,x):X\times X \to (-\infty, \infty]$. The transform $K\mapsto K^{c}$ is an order reversing quasi involution. 
\end{lem}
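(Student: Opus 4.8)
The statement to prove is that for a symmetric cost $c$ on $X\times X$, the transform $K\mapsto K^c$ is an order reversing quasi involution, i.e.\ it satisfies conditions (i) and (ii) of Definition~\ref{def:set-duality}. Both are direct consequences of the defining formula $K^c=\{y: \inf_{x\in K}c(x,y)\ge t\}$, so the proof is a short unwinding of definitions; the only thing to be careful about is where the symmetry of $c$ enters (it enters exactly in the quasi involution property, not in order reversion).

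\emph{Order reversion (ii).} Suppose $L\subseteq K$. Then for any $y$, $\inf_{x\in K}c(x,y)\le \inf_{x\in L}c(x,y)$, since the infimum over a larger set is no larger. Hence if $y\in K^c$, i.e.\ $\inf_{x\in K}c(x,y)\ge t$, then also $\inf_{x\in L}c(x,y)\ge t$, i.e.\ $y\in L^c$. Thus $K^c\subseteq L^c$. This uses nothing about $c$ beyond its being a function into $(-\infty,\infty]$.

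\emph{Quasi involution (i).} We must show $K\subseteq (K^c)^c$. Fix $x_0\in K$. To see $x_0\in (K^c)^c$ we need $\inf_{y\in K^c}c(x_0,y)\ge t$, i.e.\ $c(x_0,y)\ge t$ for every $y\in K^c$. But if $y\in K^c$ then by definition $\inf_{x\in K}c(x,y)\ge t$, and since $x_0\in K$ this gives $c(x_0,y)\ge t$; here we use $c(x_0,y)=c(y,x_0)$ so that the roles of the two arguments match up with the definition of $(K^c)^c$ (which takes infima over the first slot). Hence $c(x_0,y)\ge t$ for all $y\in K^c$, so $x_0\in (K^c)^c$, proving $K\subseteq (K^c)^c$.

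\emph{Main obstacle.} There is essentially no obstacle: the result is a formal manipulation. The one point worth stating explicitly, and the only place the hypothesis $c(x,y)=c(y,x)$ is used, is the symmetry step in part (i) — without it one would only get a $c$-dual and a ``reverse'' $c$-dual between which the quasi involution inclusion holds, and the self-map statement would fail. (Note also that an alternative, even slicker, route is to invoke Theorem~\ref{thm:duality-induced-by-cost} in reverse spirit, but since that theorem is proved \emph{using} this lemma, the direct argument above is the honest one.)
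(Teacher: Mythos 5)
Your proof is correct and follows essentially the same argument as the paper's: order reversion comes from monotonicity of the infimum, and the quasi involution inclusion from evaluating the defining condition of $K^c$ at a fixed $x_0\in K$, with symmetry of $c$ identifying the two slots. The only difference is that you make the use of $c(x,y)=c(y,x)$ explicit, which the paper leaves implicit.
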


\begin{proof}
	Order reversion is immediate from the definition, since if $K\subseteq L$ and $c(x,y)\ge t$ for all $x\in L$ then $c(x,y)\ge t$ for all $x\in K$. For the property $K \subseteq TTK$ note that if $x\in K$ and $y\in K^{c}$ then $c(x,y)\ge t$ so that $x\in K^{cc}$. 	
\end{proof}

\begin{rem}
	As mentioned above, our motivation for studying abstract dualities for subsets arose from our study of optimal transport maps and potentials in \cite{paper2}.  In this setting, we consider a measure space $X$ together with a symmetric cost function $~{c:X\times X\to (-\infty, \infty]}$.  Given a probability measure $\pi$ on $X\times X$  we define the ``total cost'' of $\pi$ by $\int_{X\times X}c(x,y) d\pi(x,y)$.  When this expression is finite, we say that $\pi$ is a finite cost plan between its two marginals, $\mu$ and $\nu$, given by $\mu(A) = \pi (A\times X)$ and $\nu(B) = \pi (X\times B)$, where $A$ and $B$ are any measurable subsets of $X$. 
	
	A necessary condition for the total cost of $\pi$ to be finite is that $\pi$ is concentrated  on a set $S\subseteq X\times X$ such that $c(x,y)<\infty$ for any $(x,y)\in S$.  Studying requirements for the existence of finite cost plans between given measures in \cite{paper2}, a natural construction of the ``$c$-dual'' of a subset of $X$ arose, which, roughly speaking, is as follows: Given a subset $A\subseteq X$, we look at all of the elements $y\in X$ which have finite cost with {\em at least} one element of $A$. The complement of this set is then given by  
	\[ TA = \{ y\in X: \, \forall x\in A, \, c(x,y) = \infty\}. \] 
	
	It is easy to check  that a necessary condition for the existence of a finite cost plan transporting  a probability measures $\mu$ to a probability measure $\nu$ is that  $\mu(A)+\nu(TA)\le 1$ for all $A\subseteq X$. 
	
\end{rem}

Our main observation in this section is that all 
\orqis~ on sets are induced by costs, and in fact, by costs attaining only two values, for example, costs into $\{\pm 1\}$.

\begin{manualtheorem}{1.3}%
	Let $T: \P(X) \to \P(X) $ be an order reversing quasi involution. Then  there exists a cost function $c:X\times X\to \{\pm 1\}$ such that   for all $K\subseteq X$, $TK = K^{c}$.
\end{manualtheorem}

\begin{proof}   
	We shall construct a cost $c$ as required. To do so, define the following subset of $X\times X$ 
	\[ S = \{ (x,y) : y \in T(\{x\})\},\] 
	and define $c(x,y) = 1$ if $(x,y)\in S$ and $-1$ elsewhere. Note that $S$ is a symmetric set (that is, $(x,y) \in S$ implies $(y,x) \in S$): Indeed, 
	$y\in T(\{x\})$ implies $T(\{y\}) \supseteq TT(\{x\}) \supseteq \{x\}$
	so that $x\in T(\{y\})$.

	Let $K\subseteq X$, we claim that $TK=K^c$. 
	Note that 
	\[K^c = \{y:\ \forall x\in K\ c(x,y)\ge 0\} 
	= \{y:\ \forall x\in K\ (x,y)\in S\}
	=  \{y:\ \forall x\in K\ y\in T(\{x\})\}.\]
	Thus,
	\[K^c\subseteq \cap_{x\in K} T(\{x\}) = T(\cup_{x\in K} \{x\}) = TK. \]
	
	Next let $y\in TK$, then $T(\{y\}) \supseteq TTK \supseteq K$ so that for every $x\in K$ we have $x\in T(\{y\})$ which, by the symmetry of $S$ mentioned above, means that for every $x\in K$, $y\in T(\{x\})$ so that 
	for every $x\in K$, $c(x,y) \ge 0$ namely $y\in K^c$ as required. 
\end{proof}

As the proof above demonstrates, to every
symmetric (with respect to $(x,y)\mapsto (y,x)$) subset $S\subseteq X\times X$ there corresponds a unique order reversing quasi involution. Indeed,  given $S$ we define a cost by $c(x,y) = 1$ if $(x,y)\in S$ and $-1$ elsewhere. Let
\[ T\{x\}= \{x\}^c = \{y : c(x,y) \ge 0\} =\{y: (x,y) \in  S\} = : S_x\]
(which can be called the fiber of $X$). For any $K\subseteq X$ 
\[ TK = T(\cup_{x\in K} \{x\}) = \cap_{x\in K}S_x.\]
Clearly different sets $S$ produce different \orqis~ $T$, since at least one of the fibers is different.  

In what follows we will associate an \orqi $T$ with either a cost function $c$ or a set $S\subseteq X\times Y$, according to what is more convenient for the clarity of the proof.

\begin{rem}\label{rem: set S}
	It will be useful to have another representation of the set $S$ defined in the proof of Theorem \ref{thm:duality-induced-by-cost} above. This set can be equivalently written as \[ \tilde{S} = \cup \{   TK \times TTK: K \subseteq X\}\subseteq X\times X.\] 
	Indeed,  we have that $S=\cup_x \{x\}\times\{Tx\} = \cup_x \{Tx\}\times\{x\} \subseteq \cup_x \{Tx\}\times TT\{x\}\subseteq \cup_K TK \times TTK$. For the reverse inclusion $\tilde{S}\subseteq S$ note that if we choose a point $(x,y)$ in $\tilde{S}$ then there is some $K\subseteq X$ such that $(x,y)\in TK\times TTK$ and it follows that since $x\in TK$ we have that $TTK\subseteq T\{x\}$ and hence $y\in T\{x\}$, which means that $(x,y)\in S$. 
\end{rem}

\section{A Selection of Examples}\label{sec:Zoo}

Many of the classical operations considered in convexity theory are \orqis.  In this section we gather what we find to be a selection of illuminating examples some of which are well known, some less known and some completely new.  Another collection will be presented in Section \ref{sec:another-ex}.

\begin{exm}[Polarity]\label{ex:polar}
	Consider the polarity transform $T:\P(\R^n)\to \P(\R^n)$ given by 
	$$TK= K\dual=\{y:\forall x\in K\ \iprod{x}{y}\le 1\}.$$
	The associated set is $S=\{(x,y): \, \sp{x,y}\le 1\}$. The image class for this transform is 
	${\mathcal K}_0^n$, the class of closed convex sets which include the origin. 
	To write it as a cost-transform, one may take $c(x,y)=-\iprod{x}{y} +1$ so that  $K^c=\{y: \forall x\in K \ -\sp{x,y}+1\ge 0\}=K^\circ$. 
	As is well known, $K^{\circ\circ} = \conv(K, 0)$, the smallest set in the class which includes $K$.  The only invariant set is $B_2^n=\{x\in \R^n: \, |x| \le 1\}$ (we discuss invariant sets for \orqis~ in Section \ref{sec:fixed-points}).  
	In \cite{boroczky-schneider}, B\"or\"oczky and Schneider showed that polarity is essentially the only order reversing involution on ${\mathcal K}_0^n$.  
	For an overview of polarity in classical convexity see, for example,  \cite[Section 1.6]{schneider-book}.

\end{exm}
Another known and important duality, which we present in the setting of \orqis, is the Legendre transform $\L$ which maps the class $\cvx(\R^n)$ (consisting of proper lower semi continuous convex functions $\varphi: \RR^n\to \RR\cup \{+\infty\}$)  to itself.  We identify a function with its epigraph, and then the transform can be extended to  an \orqi on $\RR^{n+1}$ with a natural associated cost function.

\begin{exm}[Legendre transform]\label{exm:Legendre}
	Fix a partition $\RR^{n+1} = \RR^n \times \RR$. Given a subset $A\subseteq \RR^{n+1}$, complete it to be a set $A'$ which is an epi-graph of a function $\varphi: \RR^{n}\to [-\infty, \infty]$ (letting $(x,s)$ belong to $A'$ whenever $(x,t)\in A$  and $s\ge t$). Consider the transform $T: \P(\RR^{n+1})\to \P(\RR^{n+1})$ defined by $TA = \epi(\L\varphi)$.  
	Here $\L$ denotes 
	the Legendre transform, 
	\[ \L \varphi (y) = \sup_x \left(\iprod{x}{y}-\varphi(x) \right),\]
	and $\epi(\varphi) = \{(x,t): t\ge \varphi(x)\}$.	The associated set is 
	\[ S = \{ ((x,t), (y,s)) :   \iprod{x}{y} \le s+ t \}.\]
	
	The image class for this transform is the class of epi-graphs of functions in  $\cvx(\RR^n)$ together  with the constant $+\infty$ and the constant $-\infty$ functions. It is easy to check that $T$
	is an \orqi using, in particular, the fact that $\L\L\varphi \le \varphi$ for all $\varphi\in \cvx(\RR^n)$. 
	To write it as a cost transform, one may take $c((x,t), (y,s)) = t+ s - \iprod{x}{y}$. The only invariant set is $\epi(\|x\|^2/2)$, see  \cite[Section~9.2.5]{AGAbook2}.  
\end{exm}

The Legendre transform is just one of many functional cost transforms, all of which fit into this framework in a similar manner, see Example \ref{ex:functional-cost-trans}. 

Our next example is in the setting of metric spaces, and does not require a linear structure on the space $X$. 

\begin{exm}[Complements of neighborhoods]\label{ex:complement-of-tneighborhood}
	Consider the   transform $T: \P(X)\to \P(X)$ where $(X,d)$ is a metric space, given by 
	\[ TA = \{ y: d(x,y)\ge \eps \ \forall x\in A\}, \] 
	which corresponds to taking a set to the complement of its $\eps$-neighborhood. 	It is clear that $A\subseteq TT A$ and that $T$ reverses order. 
	The associated set is  $S = \{ (x,y): d(x,y)\ge \eps\}$. The image class for this transform consists of complements of unions of $\eps$-balls. For example, all convex sets are in the class. To write $T$ as a cost transform, one may take $c(x,y) = d(x,y) - \eps$. Clearly, there are no invariant sets. 
\end{exm}

The next example is quite trivial, but will show up unexpectedly in a few pages; we emphasize that {\em any} choice of symmetric cost function induces an \orqi and generates a class of sets, so one may view this example as simply ``experimenting" with artificial cost functions. 

\begin{exm}[Balls]\label{ex:balls}
	Consider the cost function $c(x,y)=1-\|x\|\|y\|$ on $\RR^{n}$ where $\|\cdot\|$ is any fixed norm.  Equivalently, consider 
	\[ S = \{ (x,y): \|x\|\|y\|\le 1\}.\]
	Then 
	\[ TA = \{ y: \forall x\in A\ \|y\| \le 1/\|x\|\} = \frac{1}{R} K \]
	where $R = \sup \{ \|x\|: x\in A\}$ and $K$ is the unit ball of the norm $\|\cdot\|$, i.e. $K=\{x \in \R^n: \ \|x\|\le1 \}$. 
	The image class consists of positive multiples of $K$, as well as $\{0\}$ and $\RR^n$, and $K$ is the only invariant set in the class.
	
\end{exm}

In the next example the induced class is that of complements of ``flowers''.  A flower is defined as a union of the form $A=\bigcup_{x\in C}B\left(\frac{x}{2},\frac{|x|}{2}\right)$, where $C\subseteq \RR^n$ is some closed set.
The class of flowers is closed under unions (not intersections), which explains why we need to consider complements (for the relation between a class and the class of complements see Lemma \ref{lemma:switch-direction}). The class of flowers was recently investigated in \cite{flowers1, flowers2}, and the flower transform $K \mapsto K^\clubsuit$, mapping a convex body to its ``flower'',  was defined to be the star shaped set with radial function $r_{K^\clubsuit}=h_K$.

\begin{exm}[Flowers]\label{ex:flowers}
	Consider the set $S\subseteq \R^n \times \R^n$ given by 
	\[ S = \{  (x,y): \iprod{x}{y} < \frac12 |x|^2|y|^2        \}. \] 
	The associated transform maps a set $A$ to the set 
	\[ TA= \R^n \setminus \bigcup_{x\in A} B(\tfrac{x}{|x|^2}, \tfrac{1}{|x|}). \]
	The image class is thus clearly (using that $\Phi(x)=x/|x|^2$ is a bijection) all complements of flowers. In terms of the flower transform we have $T(\Phi(K/2)) = \R^n \setminus K^\clubsuit$. 
	The image of a single point $x_0\in X$ is given by  
	\[ \{ y: 2\iprod{z_0}{y} < |y|^2  \} = \RR^n \setminus B(z_0, |z_0|)\]
	where $z_0 = x_0 /|x_0|^2$. We discuss in Section \ref{sec:fixed-points} the easily verifiable fact that there is a unique invariant set for $T$, namely the set $\{ x: |x|\ge \sqrt{2}\}$. 
\end{exm}

We shall use the next example to demonstrate a way to use given \orqis~to build new ones by way of intersection. We start by presenting an \orqi whose image is the class of reciprocal convex sets, 
studied by Milman, Milman and Rotem \cite{flowers1}. We then show that it can be obtained as an intersection of a family of simple \orqis.

\begin{exm}[Reciprocals]\label{ex:recip} 
	Consider the set $S\subseteq \R^n \times \R^n$ given by 
	\[ S = \{  (x,y):   \iprod{x}{\theta}\iprod{y}{\theta}  \le 1      \ \forall \theta\in S^{n-1}  \}. \] 
	One may compute the associated transform
	\begin{eqnarray*}
		T A  %
		& = & \{ y\in \RR^n: \iprod{x}{\theta}\iprod{y}{\theta}  \le 1\ \forall x\in A\ \forall \theta\in S^{n-1}    \}\\
		& = & \{ y\in \RR^n: 
		(\sup_{x\in A}\iprod{x}{\theta})   \iprod{y}{\theta}\le 1\
		\forall  \theta\in S^{n-1}    \}\\
		& = & \{ y\in \RR^n: h_A(\theta)\iprod{y}{\theta} \le 1\ \forall \theta\in S^{n-1}   \}.	
	\end{eqnarray*}
	
	Note that when $h_A(\theta) <0$ then, as $h_A(-\theta) \ge -h_A(\theta)$, we may ignore, in such cases,  the condition $h_A(\theta)\iprod{y}{\theta} \le 1$. In fact, one may easily check that $TA = T(\conv(0,A))$.  
	The image class of $T$ is the class of so-called ``reciprocal bodies'', namely the Wulff shapes (or Alexandrov bodies) associated with the functions $1/h_A$ (on the sphere), where $A\in \K_0^{n}$, i.e. the set
	\[\{ y\in \RR^n: \forall   \theta\in S^{n-1}\      
	\iprod{y}{\theta} \le 1/h_A(\theta) \}.\] 
	
	\sloppy The transform itself is the ``reciprocal transform'' considered in \cite{flowers1}, denoted there by $~{TK = K'}$. The only invariant set is the euclidean unit ball. A natural cost to consider is of course $c(x,y) = 1- \sup_{\theta \in S^{n-1}} \iprod{x}{\theta} \iprod{\theta}{y}$. 
\end{exm} 

It turns out that many properties of the reciprocal transform shown in \cite{flowers1} follow immediately from the theory of \orqis{}.  Firstly,  by Lemma \ref{lem:orqi-is-duality-on-image} we have that $K'''=K'$.  Secondly,  it follows that $K'\subseteq K^\circ$.  Indeed, 
we have \[ c(x,y) = 1- \sup_{\theta\in S^{n-1}}\iprod{x}{\theta}\iprod{y}{\theta} \le  1- \iprod{x}{y/|y|}\iprod{y}{y/|y|} = 1- \iprod{x}{y} =: c_2(x,y), \] 
and since $c_2$ is the cost inducing polarity, the claim follows.

Additionally, one may present this example in multiple interesting ways, for example, we show that it can be viewed as an intersection of \orqis.  In terms of costs, ``intersection'' corresponds to taking the minimum of the given costs, and in terms of the associated sets $S$, it is simply their intersection. 

\begin{fact}\label{prop-intersection}
	Assume $T_1$ and $T_2$ are two order reversing quasi involutions. Then also $ T_3$, defined by $T_3A = T_1 A \cap T_2 A$ is an order reversing quasi involution. Similarly, if $T_\alpha$ is an \orqi for any $\alpha \in I$, then $TA = \cap_{\alpha\in I} T_\alpha A$ is an \orqi as well. 
\end{fact}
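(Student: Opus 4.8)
The plan is to verify the two defining properties of an order reversing quasi involution directly for $T_3$ (and then for the arbitrary intersection $T$), using the corresponding properties of $T_1, T_2$ (resp. the $T_\alpha$). Both properties follow by elementary set manipulation, so the proof is short and there is no genuine obstacle; the only thing to be a little careful about is the order-reversion argument, which uses monotonicity of intersection.

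First I would check order reversion. Suppose $L\subseteq K$. Then $T_1 K\subseteq T_1 L$ and $T_2 K\subseteq T_2 L$ by order reversion of $T_1$ and $T_2$, so $T_3 K = T_1 K\cap T_2 K\subseteq T_1 L\cap T_2 L = T_3 L$. For the family $(T_\alpha)_{\alpha\in I}$ the same reasoning gives $T_\alpha K\subseteq T_\alpha L$ for every $\alpha$, hence $\cap_\alpha T_\alpha K\subseteq \cap_\alpha T_\alpha L$, i.e. $TK\subseteq TL$.

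Next I would check the quasi involution property $K\subseteq T_3 T_3 K$. The key point is that $T_3 K\subseteq T_1 K$, and applying $T_1$ (which reverses order) gives $T_1 T_1 K\subseteq T_1 T_3 K$; combined with $K\subseteq T_1 T_1 K$ this yields $K\subseteq T_1 T_3 K$. Symmetrically $K\subseteq T_2 T_3 K$. Intersecting, $K\subseteq T_1 T_3 K\cap T_2 T_3 K = T_3(T_3 K)$, as desired. For the general family: for each fixed $\alpha$ we have $TK = \cap_\beta T_\beta K\subseteq T_\alpha K$, so applying $T_\alpha$ gives $T_\alpha T_\alpha K\subseteq T_\alpha(TK)$, and with $K\subseteq T_\alpha T_\alpha K$ we get $K\subseteq T_\alpha(TK)$ for every $\alpha$; intersecting over $\alpha$ gives $K\subseteq \cap_\alpha T_\alpha(TK) = T(TK)$.

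The statement for $T_3$ is of course just the case $|I|=2$ of the statement for an arbitrary family, so in the writeup I would simply prove the general case and note that it subsumes the pairwise one. If one prefers the cost-function viewpoint, one can instead invoke Theorem \ref{thm:duality-induced-by-cost} to write each $T_\alpha$ as $K\mapsto K^{c_\alpha}$ for a symmetric cost $c_\alpha$, and observe that $\cap_\alpha K^{c_\alpha} = K^{c}$ where $c(x,y) = \inf_\alpha c_\alpha(x,y)$ is again a symmetric cost; since every cost transform is an \orqi (by the Lemma preceding Theorem \ref{thm:duality-induced-by-cost}), so is $T$. I expect to present the direct set-theoretic argument as the main proof, as it is self-contained and avoids any measurability or well-definedness concerns about the infimum cost.
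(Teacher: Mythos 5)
Your proof is correct. The paper does not actually write out a proof of this Fact; it is justified only by the preceding remark that intersection of \orqis{} corresponds to taking the minimum of the associated costs (equivalently, the intersection of the associated sets $S_\alpha$), which is exactly the second route you sketch via Theorem \ref{thm:duality-induced-by-cost}. Your primary, direct set-theoretic argument is a valid alternative: order reversion passes to intersections by monotonicity, and the quasi-involution property follows from $TK\subseteq T_\alpha K$, hence $K\subseteq T_\alpha T_\alpha K\subseteq T_\alpha(TK)$ for each $\alpha$, and intersecting over $\alpha$. This direct argument has the mild advantage of not invoking the cost representation at all (so it works verbatim for an arbitrary index set with no concerns about how the costs are normalized), while the cost/set-$S$ viewpoint the paper intends makes the statement essentially immediate and fits the narrative of the section. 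Either writeup would be acceptable; proving the general family case and noting that $T_3$ is the case $|I|=2$ is the right economy.
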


Therefore, one can think of Example \ref{ex:recip} as an intersection of a continuum of the following simple order reversing quasi involutions:

\begin{exm}[$\theta$-slabs]
	\sloppy For a fixed $\theta\in S^{n-1}$ consider the cost function $c_\theta$ on $\R^n\times \R^n$ given by ${c_\theta(x,y) = 1- \iprod{x}{\theta}{\iprod{y}{\theta}}}$.  The associated set is $S_\theta=\{(x,y): \ \iprod{x}{\theta}{\iprod{y}{\theta}} \le 1 \}$. The image class consists of slabs and half-spaces, containing the origin, in direction $\theta$. More precisely, denoting the transform $T_\theta$, we have 
	\[ T_\theta A  = \{ y:  \iprod{y}{\theta} \le 1/\iprod{x}{\theta} \ \forall x\in A \},	
	\]	
	which simply means that $T_\theta A = (P_{\theta \RR} A)^\circ$. 
	
	As $\cap_{\theta \in S^{n-1}} S_\theta $ gives the set $S$ from Example \ref{ex:recip}, we see that the reciprocal transform is an intersection of these simple slab-transforms. 
	
\end{exm}

This representation again offers a new insight into the geometry of reciprocals,  namely that \[K' = \cap_\theta (P_\theta K)^\circ.\]

We proceed to  yet another representation of the reciprocals, which places the family of reciprocal bodies as an average of polarity presented in Example \ref{ex:polar} and balls from Example \ref{ex:balls}. More precisely, we consider a continuous family of \orqis~ by means of averaging two cost functions.  Undoubtedly one can average any two costs to get new examples, however, since a transform does not have a uniquely associated cost, the family of ``intermediate transforms'' is not uniquely defined. Nonetheless, when there is a natural cost associated with an order reversing quasi involution, some interesting examples can be produced.  As already mentioned, our example will be that of the reciprocal transform expressed as the arithmetic average of polarity (Example \ref{ex:polar}) and balls (Example \ref{ex:balls}).

\begin{exm}[Reciprocal-type transform] 
	Consider a family of costs on $\R^n \times \R^n$ given by $c_\l(x,y)= 1- (1-\l)|x||y| +\l\sp{x,y}$.  The associated sets are  given by 
	\[S_\l = \{(x,y):  (1-\l)|x||y| +\l\sp{x,y} \le 1\}\]
	The image class of the associated transform depends, of course,  on $\l$.  In the case $\l=\tfrac 12$, as mentioned above, the image class consists of reciprocals convex bodies, as in Example \ref{ex:recip}. Indeed, 
	\begin{align*}
	\sup_{\theta \in S^{n-1}} \iprod{x}{\theta}\iprod{\theta}{y} &= \sup_{\theta\in S^{n-1}} |x| |y| \cos ( \sphericalangle \theta x) \cos ( \sphericalangle \theta y) \\ &= \tfrac{1}{2} |x||y| \sup_{\theta\in S^{n-1}} ( \cos (\sphericalangle \theta x +\sphericalangle \theta y) +\cos  (\sphericalangle \theta x - \sphericalangle \theta y) ) \\
	&= \tfrac{1}{2} |x||y| ( \cos (\sphericalangle xy)+\cos(0)) = \tfrac 12 ( \sp{x,y}+|x||y|)
	\end{align*}
	where the supremum is obtained at $\theta = \frac{1}{2}(\hat{x} + \hat{y})$ where $\hat{v} = v/|v|$. 
\end{exm}

In the last part of this section we present a new construction, which produces a ``dual'' of a given order reversing quasi involution. That is, \orqis~ come in pairs.  Moreover, this pairing reverses the natural order on \orqis~ (on a given space $X$) given by the pull back of the inclusion order on the associated sets $S\subseteq X\times X$.

As we have seen in Section \ref{sec:cost-dualities},  every \orqi $T:\P(X)\to \P(X)$ is determined by the 
set $S=\{TK\times TTK: \, K\subseteq X\}\subseteq X\times X$.  In many cases (e.g. all of the cases above except for Example \ref{ex:flowers}) the set 
$S$ is  closed, which ensures that the image class consists of closed sets. This is a desirable property (usually) which is why, below, we chose to define the pairing in a way that includes a closure operation (we let $\overline{A}$ stand for the closure of $A$ with respect the standard topology). 

\begin{definition}\label{def:dualorqi}
	\sloppy Given a topological space $X$ and an \orqi  $~{T:~\P(X)~\to~\P(X)}$ with an associated set $S_T =\{TK\times TTK: \, K\subseteq X\}\subseteq X\times X $ we define its {\bf dual order reversing quasi involution} to be $T':\P(X) \to \P(X)$ with an associated set $S_{T'} = \overline{X\times X\setminus S_T}$ (here closure is taken with respect to the product topology). 	
\end{definition}

Since $S_T$ is symmetric, so is $S_{T'}$. When $c$ is ``reasonable'', the cost associated with $T'$ is simply $-c_T$ where $c_T$ is the cost associated with $T$. We next dualize two of the above examples. First, consider the dual to Example \ref{ex:complement-of-tneighborhood}.

\begin{exm}[Ball intersections]\label{ex: dual-neighborhoods} Let $(X,d)$ be some metric space. 
	Let
	\[S = \{ (x,y): d(x,y)\le \eps\}.\]
	The associated transform is given by 
	\[ 
	T A = \cap_{x\in A} B(x,\eps). 
	\] The image class consists of all sets obtained by intersections of balls of radius $\eps$.  In particular, these sets are closed and of diameter at most $2\eps$. When $X$ is a linear space, the sets in the image class are convex, and the transform is shift invariant $T(a+ A) = a + TA$ (which incidentally happens for a transform associated with a set $S$ if and only if $S = S+D$  for the diagonal $D = \{ (x,x): x\in X\}$). An exciting feature of $T$ is that its invariant sets are the so-called ``diametrically complete'' sets, and when $X = \RR^n$ with the Euclidean distance $d$, these are precisely sets of equal width $\eps$. 
	The analysis of invariant sets of \orqis~in general, and of $T$ in particular, will be presented in Section \ref{sec:fixed-points}.
\end{exm}

The next example is extremely rich, and we devote a full section to it (Section \ref{sec:dual-duality}). It is the dual (in the sense of Definition \ref{def:dualorqi}) of the classical polarity from Example \ref{ex:polar}.  

\begin{exm}[Dual polarity]\label{ex:dual-polarity}
	Let
	\[ S= \{(x,y): \ \sp{x,y}\ge 1\}\subseteq \RR^n \times \RR^n. \]
	The associated transform is given by 
	\[ TA = \{ y: \iprod{x}{y} \ge 1\ \forall x\in A\}.\] 
	The image class   consists of intersections of affine half-spaces that do not include the origin.  In particular, these are unbounded, closed and convex sets.  We analyze this class and the transform in detail in the next section. 
\end{exm}

\begin{rem}
	This section included just a few of the many interesting examples for \orqis. We included more examples in Section \ref{sec:another-ex}. We mention that the dual of the `Legendre-induced' \orqi from Example \ref{exm:Legendre} remains the same type up to a change of coordinate system since 
	\[ S_{T'} = \{ ((x,s),(y,t)): \iprod{x}{y} \ge s+ t \} = 
	\{ ((x,s),(y,t)): \iprod{-x}{y} \le -s- t \}
	\]
	is a linear image of $S_T= \{ ((x,s),(y,t)): \iprod{x}{y} \le s+ t \}$. 
\end{rem}

\begin{rem} 
	Let us mention that while many geometric families of sets appear as images of \orqis~  or their complement, some classical families of convex bodies cannot arise in this way. For example, the classes of Zonotopes (Minkowski sums of a finite number of segments) or Zonoids (limits of Zonotopes) cannot be the image class of an \orqi as they are not closed under intersection (nor can their complements, as they are not closed under unions either).
\end{rem}

\section{Dual Polarity}\label{sec:dual-duality}

The classical polarity transform plays a central role in convex geometry (see e.g. \cite{schneider-book}). Its dual, given in Example \ref{ex:dual-polarity}, is the object we carefully analyze in this section. Throughout this section, $~{T:\P(\RR^n) \to \P(\RR^n)}$ will denote the \orqi given by 
\[ TA = \{ y: \iprod{x}{y} \ge 1\ \forall x\in A\}.
\]

\subsection{The image of $T$ and a connected functional transform}
By definition, the image of the transform consists of intersections of affine half-spaces that do not include the origin. We can characterize the image in several useful ways.   

To begin with,  note that the image of a point $\{x_0\}$ is the halfspace 
\[ H_{x_0} = \{ y:\iprod{x_0}{y}\ge 1 \}.\] 
By Proposition \ref{prop:intersection-and-dual},  the  image class consists of all possible intersections of such half-spaces. In particular, if $K$ is in the image class,  then $K$ is convex and 
$\lambda K \subseteq K$ 
for any $\lambda >1$, a property one might call ``cone-like''. Together with the property of not including the origin, this characterizes sets in the image.  
\begin{lem}\label{lem:class-of-dual-polarity}
	The  class  $\C = \{ TK : K\subseteq \RR^n\}$ consists of \,$\RR^n$ together with all closed convex sets $K\subseteq \RR^n$ that do not include the origin and satisfy for all $\lambda\ge 1$ that $\lambda K \subseteq K$. 
\end{lem}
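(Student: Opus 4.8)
The plan is to prove the two inclusions separately. For the easy inclusion, ``image $\subseteq$ claimed class'': note $T\emptyset=\RR^{n}$ (an empty intersection), and for $K\ne\emptyset$ apply Proposition \ref{prop:intersection-and-dual} to write $TK=\bigcap_{x\in K}H_{x}$ with $H_{x}=\{y:\iprod{x}{y}\ge 1\}$. Each $H_{x}$ is closed and convex, omits the origin (as $\iprod{x}{0}=0<1$), and is cone-like, i.e. $\lambda H_{x}\subseteq H_{x}$ for $\lambda\ge 1$ (if $\iprod{x}{y}\ge 1$ then $\iprod{x}{\lambda y}=\lambda\iprod{x}{y}\ge\lambda\ge 1$). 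All three properties survive arbitrary intersection, so $TK$ is of the claimed form, with the possibility $TK=\emptyset$, which is consistent since $\emptyset$ vacuously satisfies the three conditions. This part is routine.

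For the reverse inclusion it suffices, since $\RR^{n}=T\emptyset$ and $\emptyset=T\RR^{n}$, to show that any \emph{nonempty} closed convex origin-free cone-like set $K$ equals $TTK$; then $K=T(TK)\in\C$. One inclusion $K\subseteq TTK$ is automatic from the quasi-involution property (indeed $TTK$ is the smallest image set containing $K$, by Proposition \ref{prop:TTKisenvelope}), so the real content is $TTK\subseteq K$. Unravelling definitions, this amounts to: for every $z\notin K$ there is some $x\in TK$ --- that is, $\iprod{x}{y}\ge 1$ for all $y\in K$ --- with $\iprod{x}{z}<1$, which forces $z\notin TTK$.

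I would obtain such an $x$ by a separation argument. Since $z\notin K$ and $K$ is closed and convex, strict separation of a point from a closed convex set gives $x_{0}\ne 0$ with $\alpha:=\inf_{y\in K}\iprod{x_{0}}{y}>\iprod{x_{0}}{z}$. The cone-like hypothesis forces $\alpha\ge 0$: were $\iprod{x_{0}}{y_{0}}<0$ for some $y_{0}\in K$, then along the ray $\lambda y_{0}\in K$ ($\lambda\ge 1$) the value $\iprod{x_{0}}{\lambda y_{0}}=\lambda\iprod{x_{0}}{y_{0}}$ tends to $-\infty$, contradicting the finite lower bound $\alpha$. Since in addition $0\notin K$ and $K$ is closed, separating $0$ from $K$ yields $w$ with $\gamma:=\inf_{y\in K}\iprod{w}{y}>0$. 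Now set $x_{\eps}=x_{0}+\eps w$ for small $\eps>0$: then $\inf_{y\in K}\iprod{x_{\eps}}{y}\ge\alpha+\eps\gamma>0$, while $\iprod{x_{\eps}}{z}\to\iprod{x_{0}}{z}<\alpha$ as $\eps\to 0^{+}$, so for $\eps$ small enough $\iprod{x_{\eps}}{z}<\alpha+\eps\gamma\le M:=\inf_{y\in K}\iprod{x_{\eps}}{y}$. Finally $x:=x_{\eps}/M$ satisfies $\inf_{y\in K}\iprod{x}{y}=1$ (so $x\in TK$) and $\iprod{x}{z}<1$, completing the argument.

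The step I expect to be the main obstacle is the degenerate case in which $z$ can only be separated from $K$ by a hyperplane through the origin, so $\alpha=0$ and $x_{0}$ itself cannot be rescaled into $TK$; this is exactly where both hypotheses on $K$ are indispensable --- the cone-like property to guarantee that no separating functional is negative anywhere on $K$ (hence $\alpha\ge0$), and origin-exclusion together with closedness to supply the auxiliary functional $w$ with a strictly positive infimum over $K$, used to tilt the separating hyperplane off the origin. I would conclude by recording the bookkeeping that $\emptyset$ is simultaneously a member of the described family and equals $T\RR^{n}\in\C$, and that $TK=\RR^{n}$ can only happen for $K=\emptyset$ (take $y=0$), so the two extreme sets $\RR^{n}$ and $\emptyset$ are correctly placed.
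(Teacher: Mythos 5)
Your proof is correct, and it is essentially the paper's argument in different packaging: the paper directly builds the representation $K=\bigcap_i\{x:\iprod{x}{u_i}\ge 1\}$ from the supporting half-spaces of $K$, using the cone-like property to rule out negative offsets and tilting a through-the-origin supporting hyperplane toward one separating $K$ from $0$ (via $w_t=(1-t)v+tw$), which is exactly your perturbation $x_\eps=x_0+\eps w$ followed by rescaling. Your formulation via $TTK\subseteq K$ with a pointwise separation is a clean equivalent and correctly isolates the degenerate $\alpha=0$ case as the crux.
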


\begin{proof} 
	Let us denote 
	\begin{align*}
	&\C_1 = \{ K\subseteq \RR^n : \exists (u_i)_{i\in I\neq \emptyset } \quad{\rm and} \quad K = \cap_{i\in I} \{ x: \iprod{x}{u_i} \ge 1 \}, \\
	&\C_2 = \{ K\subseteq \RR^n  :  K {\rm~is~closed,~convex}, 0\notin K, \ \lambda K \subseteq K {\rm ~for~all~}\lambda \ge 1 \}.	
	\end{align*} 
	
	We need to show that the two are equal. The fact that $\C_1\subseteq \C_2$ is obvious since the intersection of closed sets remains closed and the other cone-like properties are preserved by the intersection. For the other direction, given $K\in \C_2$, using that it is closed and convex, there exists a collection of vectors $v$ and constants $\eta(v)$ such that 
	\[ K  = \cap_v \{ x: \iprod{x}{v} \ge \eta(v)	
	\}.\]
	When $\eta(v) >0$, we consider $u = v/\eta(v)$. We claim that the condition  $\lambda K \subseteq K $ for $\lambda >1$ implies that one may dismiss the pairs $(v,\eta(v))$ for $\eta(v)<0$. Indeed, since the set $~{\{ x: \iprod{x}{v} \ge \eta(v)\}}$ is monotone in $\eta(v)$, we may assume that for a given $v$, we chose the maximal among all  constants $\eta(v)$ such that $K \subseteq \{ x: \iprod{x}{v} \ge \eta(v)	
	\}$. Therefore, assuming towards a contradiction that $\eta(v)<0$ means that    
	$K \not\subseteq \{ x: \iprod{x}{v} \ge \eta(v)/2	
	\}$, and so there is some $x_0\in K$ with $\iprod{x_0}{v} < \eta(v)/2$. However, then $3x_0$ (which must also be in $K$ as $3K \subseteq K$) satisfies $\iprod{3x_0}{v} < 3\eta(v)/2<\eta(v)$ contradicting  $K \subseteq \{ x: \iprod{x}{v} \ge \eta(v)	
	\}$. 
	
	Finally, we need to consider the case $\eta(v) = 0$. We claim that if $\{ x: \iprod{x}{v} \ge 0\}$ participates in the intersection defining $K$, we may replace it by an intersection of the form 
	$~{\cap \{ \iprod{x}{w_t} \ge \eta_t>0\}}$, in the sense that 
	$K \subseteq 	\cap \{ \iprod{x}{w_t} \ge \eta_t>0\}\subseteq \{ x: \iprod{x}{v} \ge 0\}$. To define $w_t$ and $\eta_t$, consider some hyperplane $\{ x: \iprod{x}{w} = \eta_1\}$ separating the origin and $K$. Consider $w_t = (1-t) v+ t w$ and $\eta_t = t\eta_1$.  Then we have 
	\[ K \subseteq \{ x: \iprod{x}{w_t} \ge t\eta_1\}\]
	and on the other hand if $	\iprod{x}{w_t} \ge t\eta_1$ for all $t\in (0,1)$ then
	$	(1-t)\iprod{x}{v}   \ge t(\eta_1-\iprod{x}{w})$ so taking $t\to 0$ we see	
	$\iprod{x}{v}\ge 0$. This completes the proof.  
\end{proof} %

The   class  $\C$ decomposes into sub-classes $\C_u$, $u\in S^{n-1}$, which are invariant under $T$, in the following way: Every set $K\in \C$ (except for the empty set and all of $\RR^n$) contains a unique point $0\neq x\in K$ which is closest to the origin. Indeed, it contains at least one such point by closedness, whereas convexity implies the uniqueness. 
The sub-class ${\mathcal C}_u$ consists of those $K\in \C$ whose closest point to the origin lies on the ray $u\RR^+$. We notice that ${\mathcal C}_u$ is a rotation of ${\mathcal C}_{v}$ for any two directions $u,v\in S^{n-1}$ and that the mapping $T$ respects this filtration. 

\begin{lem}\label{lem:invariant_subclass}
	For any $u\in S^{n-1}$ the sub-class $\, \C_u$ is invariant under $T$.	
\end{lem}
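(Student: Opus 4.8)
The plan is to verify directly that if $K \in \C_u$ then $TK \in \C_u$, that is, that the closest point to the origin of $TK$ again lies on the ray $u\RR^+$. First I would record the basic geometric fact underlying the filtration: if $K \in \C$ is a closed convex set with $0 \notin K$, and $x_K \neq 0$ is its unique closest point to the origin, then $\iprod{x_K}{y} \ge |x_K|^2$ for every $y \in K$ (this is the standard supporting-hyperplane characterization of the metric projection onto a convex set), and moreover $x_K = \rho(K)\, \hat{x}_K$ where $\rho(K) = \operatorname{dist}(0,K)$; the direction $\hat{x}_K \in S^{n-1}$ is what the sub-class $\C_u$ records.

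Next I would compute $\rho(TK)$ and its closest point in terms of $K$. Since $TK = \cap_{x \in K}\{y : \iprod{x}{y}\ge 1\}$, a point $y$ lies in $TK$ iff $\iprod{x}{y}\ge 1$ for all $x \in K$. Writing $x_K = \rho \hat{u}$ with $\rho = \rho(K)$ and $\hat u = u$ (the normalized closest point), I claim the closest point of $TK$ to the origin is $\tfrac{1}{\rho}u = \tfrac{1}{\rho^2}x_K$. Indeed, on one hand $\tfrac1\rho u \in TK$: for any $x \in K$ one has $\iprod{x}{\tfrac1\rho u} = \tfrac1\rho \iprod{x}{u}$, and since $x_K$ is the closest point of $K$ one has $\iprod{x}{x_K} \ge |x_K|^2 = \rho^2$, i.e. $\iprod{x}{u} \ge \rho$, so $\iprod{x}{\tfrac1\rho u}\ge 1$. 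On the other hand, applying the same inequality to $x = x_K = \rho u \in K$ shows every $y \in TK$ satisfies $\iprod{u}{y}\ge \tfrac1\rho$, so $TK$ lies in the half-space $\{y:\iprod{u}{y}\ge 1/\rho\}$, whose closest point to the origin is exactly $\tfrac1\rho u$; since this point is itself in $TK$, it is the closest point of $TK$. Hence the closest point of $TK$ lies on the ray $u\RR^+$, which is precisely the statement $TK \in \C_u$. I should also dispatch the degenerate cases: $T\emptyset = \RR^n$ and $T(\RR^n) = \emptyset$ by Proposition \ref{prop:intersection-and-dual} (or directly), and these are excluded from all $\C_u$, so the claim is vacuous there; and I should note that $TK$ is nonempty and proper whenever $K$ is, which follows from the explicit witness $\tfrac1\rho u \in TK$ together with $0 \notin TK$ (as $0$ fails $\iprod{x}{0}\ge 1$).

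I do not expect a serious obstacle here — the lemma is essentially the observation that $T$ inverts distances along a fixed ray and sends the supporting hyperplane at the closest point of $K$ to a point of $TK$. The only mild subtlety is making sure the "unique closest point" discussion is airtight for all members of $\C$ (closedness gives existence, strict convexity of the Euclidean norm plus convexity of $K$ gives uniqueness), and that one correctly identifies $\rho(TK) = 1/\rho(K)$ rather than merely an inequality; both follow from using $x_K \in K$ as a test point in the two directions of the argument above.
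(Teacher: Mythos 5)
Your proposal is correct and is essentially the paper's argument: the paper sandwiches $K$ between the ray $\ell=\{\lambda u:\lambda\ge a\}$ and the supporting half-space $H=\{x:\iprod{x}{u}\ge a\}$ and applies order reversion to get $TH\subseteq TK\subseteq T\ell$, which is exactly your two inclusions ($\tfrac1\rho u\in TK$ from $K\subseteq H$, and $TK\subseteq\{y:\iprod{u}{y}\ge 1/\rho\}$ from testing against the closest point $\rho u\in K$). The only cosmetic difference is that you use the single point $\rho u$ where the paper uses the whole ray $\ell$; since $T\ell=T\{\rho u\}$ this changes nothing.
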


\begin{proof} %
	For $K\in \C_u$ we let $H$ denote the halfspace of the form $\{x: \iprod{x}{u} \ge a\}$ and $\ell$ the ray $\{ \lambda u: \lambda \ge a\}$, where $a>0$ and $au$ is the  point in $K$ closest to the origin. Then $\ell \subseteq K \subseteq H$, and so $TH\subseteq T K\subseteq T\ell$. A direct computation gives that $T\ell = \{x: \iprod{x}{u} \ge 1/a\}$ and $TH = \{ \lambda u: \lambda \ge 1/a\}$. Therefore, $TK$ belongs to $\C_u$ as well. 	
\end{proof}%

As is clear from the above proof, $\C_u = \cup_{a>0} \C_{u,a}$ where $K\in \C_{u,a}$ if its point closest to the origin is $au$, and further, it follows that the transform $T$ maps $\C_{u,a}$ to $\C_{u,1/a}$. 
In addition, the mapping $K \mapsto \lambda K$ is a  bijection between $\C_{u,a}$ and $\C_{u, \lambda a}$.

Therefore, having fixed an orthonormal basis $\{ e_i\}_{i=1}^n$ for $\RR^{n}$, in order to study $T$ it suffices to focus on one sub-class ${\mathcal C}_{e_{n},1}$. We call this sub-class ${\mathcal S}:={\mathcal C}_{e_{n},1}$. In the next lemma we demonstrate that one can identify $\S$ with a class of functions and we provide with a functional transform that corresponds to $T$.  

For $\varphi: \RR^{n-1}\to (0, \infty]$ we let $\epi(\v) = \{ x + te_n: t\ge \v(x), \ x\in e_n^\perp\} \subset \R^n$, where we have chosen an orthonormal basis so as to identify  $e_n^\perp$ and $\RR^{n-1}$.

\begin{prop}\label{prop:functional-form-or-dp}
	Let $K\in \S$ and let  $\v: \RR^{n-1}\to (0, \infty]$ be given by 
	\[ \varphi(x) = \min \{ t: (x,t) \in K \}.\]
	Let \[ {\tilde{T}}\v (y) = \sup_{x\in \R^{n-1}} \frac{1-\sp{x,y}}{\v(x)}.\]
	Then $\epi(\v) = K$ and  
	$TK = \epi(\tilde{T}\varphi)$.
	
	The identification $K\mapsto \v$ is a bijection between $\S$ and the class of %
	lower semi continuous convex functions on $\RR^{n-1}$ with  minimal value $\v(0)= 1$, and satisfying $-1\le \L \v|_{\dom (\L \v)} \le 0$.
\end{prop}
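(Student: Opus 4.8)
The plan is to verify the three claims in sequence: (a) $\epi(\varphi)=K$, (b) $TK=\epi(\tilde T\varphi)$, and (c) the identification $K\mapsto\varphi$ is the claimed bijection. Claim (a) is immediate: since $K\in\S={\mathcal C}_{e_n,1}$ it is closed, convex, cone-like ($\lambda K\subseteq K$ for $\lambda\ge 1$) and its closest point to the origin is $e_n=(0,1)$; one checks that a closed convex cone-like set all of whose vertical fibres over $e_n^\perp$ are upward-closed rays (this is exactly what ``closest point on the ray $e_n\RR^+$ and $\lambda K\subseteq K$'' forces, by looking at the supporting halfspaces, none of which can point ``downward'' in $e_n$) coincides with the epigraph of the function $\varphi(x)=\min\{t:(x,t)\in K\}$, which is convex and lower semicontinuous because $K$ is closed and convex, and satisfies $\varphi(0)=1$ with $\varphi\ge$ something positive.

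For claim (b), I would compute $TK$ directly from the definition $TK=\{(y,s):\iprod{(x,t)}{(y,s)}\ge 1\ \forall (x,t)\in K\}$. Writing $\iprod{(x,t)}{(y,s)}=\iprod{x}{y}+ts$ and using $(a)$ to replace ``$(x,t)\in K$'' by ``$t\ge\varphi(x)$, $x\in\dom\varphi$'', the constraint becomes $\iprod{x}{y}+ts\ge 1$ for all such $(x,t)$. Since $\varphi>0$, for fixed $x$ the worst case over $t\ge\varphi(x)$ depends on the sign of $s$: if $s\ge 0$ it is $t=\varphi(x)$, giving $\iprod{x}{y}+s\varphi(x)\ge 1$, i.e. $s\ge\frac{1-\iprod{x}{y}}{\varphi(x)}$; if $s<0$ the infimum over $t$ is $-\infty$ and the constraint fails. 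Hence $TK=\{(y,s):s\ge\sup_{x}\frac{1-\iprod{x}{y}}{\varphi(x)}\}=\epi(\tilde T\varphi)$, which is exactly the asserted formula. (Implicitly one should note $\tilde T\varphi>0$, which follows since $e_n\in K$ forces $s\ge 1/\varphi(0)=1$ at $y=0$, consistent with $TK\in\S$ by Lemma~\ref{lem:invariant_subclass}.)

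Claim (c) is where the real content lies. That $K\mapsto\varphi$ lands in lsc convex functions with $\varphi(0)=\min\varphi=1$ is from (a) and the fact that $0\notin K$ combined with cone-likeness pins the minimum at $e_n$. The new constraint is $-1\le\L\varphi|_{\dom(\L\varphi)}\le 0$, and I expect this to be the main obstacle. The idea: $\varphi\in\S$ iff $TK\in\S$ (Lemma~\ref{lem:invariant_subclass}), and by (b) $TK=\epi(\tilde T\varphi)$; so I must show $\tilde T\varphi$ is again a legitimate element of the function class, equivalently that $\tilde T\tilde T\varphi=\varphi$ (quasi-involution collapses to involution on $\S$ by Lemma~\ref{lem:orqi-is-duality-on-image} and Lemma~\ref{lem:invariant_subclass}) AND that the functions so obtained are precisely those with $-1\le\L\varphi\le 0$ on the domain of $\L\varphi$. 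The key computation is to relate $\tilde T\varphi$ to the ordinary Legendre transform: $\tilde T\varphi(y)=\sup_x\frac{1-\iprod{x}{y}}{\varphi(x)}$ is a ``fractional'' Legendre-type transform, and one should identify when $\sup_x\frac{1-\iprod{x}{y}}{\varphi(x)}$ is finite and positive. Finiteness of $\tilde T\varphi(y)$ at a given $y$ is governed by the behaviour of $1-\iprod{x}{y}$ relative to $\varphi(x)$ as $|x|\to\infty$, i.e. by linear functionals $x\mapsto\iprod{x}{y}$ lying below or above the affine minorants of $\varphi$ — precisely the condition $\L\varphi(y)\le\const$. I would make the dictionary precise by the substitution that turns the projective/cone picture into the affine one: a set $K\subseteq\RR^{n-1}\times\RR^+$ that is cone-like corresponds, via intersecting with the hyperplane $\{t=1\}$ or equivalently via $(x,t)\mapsto x/t$, to an ordinary closed convex set, and dual polarity $T$ on cone-like sets corresponds under this correspondence to classical polarity (this is essentially the content of the section's title and the remark following Example~\ref{ex:dual-polarity}); translating the known description of $\kon$ under classical polarity — whose support functions are exactly those bounded appropriately — yields the two-sided bound $-1\le\L\varphi\le 0$. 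The surjectivity direction then amounts to checking that any lsc convex $\varphi$ with $\varphi(0)=\min=1$ and $-1\le\L\varphi|_{\dom\L\varphi}\le 0$ has $\epi\varphi$ cone-like with closest point $e_n$ and not containing the origin, which is a direct verification once the dictionary is in place.
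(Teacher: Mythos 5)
Parts (a) and (b) of your plan are correct and essentially identical to the paper's argument: $\epi(\v)=K$ follows because the ray $[1,\infty)e_n$ lies in $K$, so each vertical fibre of $K$ is an upward-closed ray, and the computation of $TK$ reduces to the binding constraint $t\ge (1-\sp{x,y})/\v(x)$ exactly as you describe.

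Part (c) is where your proposal has a genuine gap. You reduce the two-sided bound $-1\le \L\v|_{\dom(\L\v)}\le 0$ to a ``dictionary'' between cone-like sets and ordinary convex bodies under which $T$ becomes classical polarity, and then to ``the known description of $\kon$''; but you never construct this dictionary, and in the paper it only appears in the \emph{next} subsection (Lemma \ref{lem:TisJ}), where it moreover involves a reflection ($-\Lbod(K)^\circ=\Lbod(TK)$), so invoking it here is both circular in the paper's logical order and not obviously a one-line translation. The direct argument, which you are missing, is elementary: for $v=(-y,1)$ one has
\[ \inf_{z\in K}\sp{z,v} \;=\; -\sup_{x}\big(\sp{x,y}-\v(x)\big) \;=\; -\L\v(y), \]
so the offsets of the supporting halfspaces of $K$ with normals of the form $(-y,1)$ are exactly $-\L\v(y)$. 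The proof of Lemma \ref{lem:class-of-dual-polarity} shows that for a set in $\C$ every relevant supporting halfspace has nonnegative offset (negative offsets are dismissed, so the offset is either $\ge 0$ or the constraint is vacuous, i.e.\ $y\notin\dom(\L\v)$), giving $\L\v\le 0$ on its domain; and $e_n\in K$ with $\sp{e_n,v}=-1$ gives $-\L\v(y)\le 1$. You also misidentify the ``real content'' as showing $\tilde T\tilde T\v=\v$ — that is automatic from Lemmas \ref{lem:orqi-is-duality-on-image} and \ref{lem:invariant_subclass} once (b) is known. Finally, the surjectivity direction (that any lsc convex $\v$ with $\v(0)=\min\v=1$ and $-1\le\L\v|_{\dom(\L\v)}\le 0$ yields $\epi(\v)\in\S$) is dismissed as ``a direct verification once the dictionary is in place''; without the dictionary you need the same identity above to write $\epi(\v)=\bigcap_{y\in\dom(\L\v)}\{(x,t):\sp{(x,t)}{(y,-1)}\le \L\v(y)\}$ and appeal to Lemma \ref{lem:class-of-dual-polarity}, using $\L\L\v=\v$.
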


\begin{proof}
	Clearly
	$\varphi$ defined above is convex, attains its positive minimal value $1$ at the origin, and 
	\[  \epi(\varphi) = \{(x, t) \in \R^{n-1}\times \R:  \v (x)\le t\} = K \]
	as $K$ is closed and $(x,t)\in K$ implies $(x,s)\in K$ for any $s>t$, which follows from the fact that $~{(x,s)\in \overline{\conv}([1,\infty)e_n \cup \{(x,t)\})}$. We can thus compute  
	\begin{align*}
	T K &= \{(y,t)\in \R^{n-1}\times \R: \, \sp{x,y} +\l \v (x) t\ge 1 \ \ \forall x\in \R^{n-1},  \ \forall \lambda\ge1\}\\  &=  \{(y,t)\in \R^{n-1}\times \R: \, t \ge \frac{1-\sp{x,y}}{\lambda \v (x)}\ \ \forall x\in \R^{n-1},  \ \forall \lambda\ge1\}.
	\end{align*}
	
	Note that in fact we must have $t \ge \frac{1-\sp{x,y}}{\v(x)}$ and hence, we may  define the order reversing transform $\tilde{T}$ on functions as in the statement of the Proposition. 
	Clearly, this $\tilde{T}$ is an order reversing bijection on functions associated with sets in $\S$ 
	(and since at $x=0$ the value of the expression in the supremum is $1$, it is enough to consider the supermum over all $x$ such that $1\ge \sp{x,y}$). 
	
	We are thus left with proving the characterization for functions associated with sets in $\S$. 
	\sloppy To this end let $\v$ be a function associated with some $K \in \S$ and assume that $~{y\in \dom (\L \v)\subseteq \R^{n-1}}$.  Let $v=(-y,1)\in \R^n$ and let $c(v)=\max \{c: \, K\subseteq \{z: \, \sp{z,v}\ge c\}\}$. We claim that $c(v)>-\infty$ if and only if $y\in \dom(\L\v)$, that is, $\L\v (y) <\infty$. Indeed, 
	\[ \L\v(y)  = \sup_x (\sp{x,y}-\v(x)) = \sup_x \sp{(x, \v(x)),(y, -1)}   = \sup_{(x,t)\in \epi(\v)} \sp{(x, t),-v}  \]
	so that $\inf_{z\in K} \sp{z,v} = -\L\v(y)$.  
	From the proof of  Lemma         
	\ref{lem:class-of-dual-polarity}
	we see that either $c(v)\ge 0$ or $c(v)=-\infty$ (in which case $v$ is not to be chosen in the collection of half-spaces intersected), which means precisely that $\L\v(y)\le 0$ on its domain. Clearly, $-\L\v(y) = c(v)\le 1$ since $e_n \in K$ and $\sp{e_n,v} = -1$.

	For the opposite inclusion,  let $\v$ be a convex function such that $-1\le \L \v|_{\dom (\L \v)} \le 0$ and $\min \v (x) = \v(0)=1$. As we have computed, $(x,y)\in \epi(\v)$ implies $\iprod{(x,t)}{(y,-1)}\le \L\v (y)$ for all $y\in \dom(\L \v)$, and in the other direction, if $\iprod{(x,t)}{(y,-1)}\le \L\v (y)$ for all $y$ then $t\ge \L\L \v(x)=\v(x)$, so we have 
	\[\epi(\v) = \cap \{(x,t): \iprod {(x,t)}{(y,-1)}\le \L\v (y), \forall y\in \dom(\L\v)\}.\] 
	It then follows from Lemma \ref{lem:class-of-dual-polarity} that $\epi(\v)$ belongs to $\C$. Since the unique minimum of $\varphi$ is 1 and is attained at the origin, we can conclude that in fact $\epi(\v)$ belongs to $\S$, completing the proof. 
\end{proof}

\begin{rem}\label{rem:Liran-duality}
	It turns out that a variant of the  transform $\tilde{T}$  has been considered, as a special case, by Rotem \cite{Liran-sharpBS}. There, a transform $\sharp$ is defined as
	
	\[f^\sharp(x) = \frac{1}{\sup_y \left[f(y) \left( 1+ \frac{\iprod{x}{y}}{\beta}\right)^\beta \right]}.\]
	
	Clearly, $\left(\frac{1}{\v}\right)^\sharp (-x) = \frac{1}{\tilde{T}(x)}$. The a priori small modification of taking a reflection of $x$ in the definition of $\sharp$ will be crucial when discussing invariant sets (see Section \ref{sec:fixed-points}). 
\end{rem}

\subsection{The $J$ transform as the link with polarity}

It turns out that the transform $T$, when restricted to the subclass $\C_{e_n,1}\subset \P(\R^{n})$, say, is, up to a minus sign, the pull-back of the classical polarity transform under a point-map on the upper half-space. This point map is the one associated with the order preserving transform $J$ defined in \cite{ArtsteinMilmanHidden} and studied intensively in \cite{artstein-florentin2012order}. We illustrate this connection in this subsection. However, one should bare in mind that the mapping $J$ is defined only on subsets of a half space, and when considered as transforms on the whole space $\P(\RR^n)$ the polarity and the dual-polarity are not conjugate, and are in fact quite different. 

Recall that the $J$ transform on subsets of $\RR^{n-1}\times \RR^+$ is defined as $JK = F(K)$, where $F(x,t) = (x/t, 1/t)$. This is a convexity preserving map that maps rays emanating from the origin to rays parallel to the ray $\{0\} \times \RR^+$. 
Consider a set $K\in \C_{e_n,1} \subset \P(\RR^n)$. We already showed that it consists of rays, $K  = \{ t(x,1) : t\ge a(x) \}$ where $a: \RR^{n-1} \to [1,\infty]$.  Note that $a(0)  = 1$ and when $a(x) = \infty$ this means the ray $\RR(x,1)$ does not intersect $K$.

\def\Lbod{{\tilde{J}}}

Thus $JK = \{ (x, \frac{1}{t}): t\ge a(x)\} = \{ (x,s) : s\le \frac{1}{a(x)}\}$. Since $K$ was convex so is $JK$, hence $1/a(x)$ is a concave function on its support (which consists precisely of those $x$'s such that the ray $(x,1)\RR$ intersects $K$ or is asymptotically tangent to it).
In this way we can associate to every $K \in \C_{e_n,1}$ a convex body in $\RR^{n}$ given by 
\[\Lbod(K) = \{(x,t): |t|\le1/a(x) \}.\]  
Note that this operation is simply applying $J$ to $K$ and taking a union with the reflection, i.e. $\Lbod(K)= JK \cup R_{e_{n}}JK$ where $R_{e_n}y = y -2\iprod{e_n}{y}e_n$. The resulting body always includes the segment $[-e_n,e_n]$, is included in the slab $\{ |\iprod{\cdot}{e_n} |\le 1\}$, and is invariant under reflections about $e_n^\perp$. 

\begin{lem}\label{lem:TisJ}
	For $K\in \C_{e_n,1}\subset \P(\RR^{n})$ we have that \[ -\Lbod(K)^{\circ} = \Lbod(T(K)).\]	
\end{lem}

\begin{proof}
	Let $K = \{(x,s): s\ge \varphi(x)\}\in \C_{e_n,1}$, then by definition of $J$ we get that
	\[ J K= 
	\big\{ \Big(\frac{x}{s}, \frac{1}{s}\Big): s\ge \varphi(x)\big\} = \big\{ a\big(\frac{x}{\varphi(x)} , \frac{1}{\varphi(x)}\big) : 0\le a\le 1 \big\},
	\]
	and thus 
	\[ \Lbod(K) = \big\{ \big(a\frac{x}{\varphi(x)} , \pm a\frac{1}{\varphi(x)}\big) : 0 \le a\le 1 \big\}. 	
	\]
	Therefore
	\begin{eqnarray*}\Lbod(K)^{\circ} &=& \{ (y,r) : \iprod{x}{y} + rs \le 1 \quad\forall (x,s)\in \Lbod(K)\} \\ &=&
		\{ (y,r) : \iprod{ax}{y} \pm  r a \le \varphi(x) \quad\forall x \in \RR^{n-1}, 0\le a\le 1\}  
		\\ &=&
		\{ (y,r) : \iprod{x}{y} \pm  r  \le \varphi(x) \quad\forall x \in  \RR^{n-1}\}  
		\\ &=&
		\{ (y,r) : \iprod{x}{y}  - \varphi(x) \le   r  \le \varphi(x)-\iprod{x}{y} \quad\forall x \in \RR^{n-1}\}.  
	\end{eqnarray*}
	On the other hand, 
	\[ T(K) = \big\{ (x,s): s\ge \sup_z \frac{1 - \iprod{x}{z}}{\varphi(z)}   \} = \{ (x,s): s\varphi(z) \ge 1 - \iprod{x}{z} \quad \forall z \in \RR^{n-1}\big\}.\]
	So, 
	\begin{eqnarray*}JTK & =& \big\{ \big(\frac{x}{s}, \frac{1}{s}\big) :  s\varphi(z) \ge 1 - \iprod{x}{z} \quad \forall z \in  \RR^{n-1}
		\big\}  \\
		& = & \{ (y,r) : \frac{1}{r} \varphi(z)\ge 1 - \iprod{y/r}{z} \quad \forall z \in  \RR^{n-1}
		\} %
		\\
		& = & \{ (y,r) :  r \le   \varphi(z) +  \iprod{y}{z} \quad \forall z \in  \RR^{n-1}
		\} . 
	\end{eqnarray*}
	We thus see that
	\[    
	\Lbod(TK) = \{ (y,r) : -\varphi(z) -  \iprod{y}{z} \le  r \le   \varphi(z) +  \iprod{y}{z} \quad \forall z \in \ \RR^{n-1}
	\} . 
	\]
	Plugging in $-y$ instead of $y$ in $\Lbod(K)^\circ$, gives $\Lbod(TK)$ (since the bodies are invariant under reflection with respect to $e_{n}^\perp$, the map $r\mapsto -r$ does not change the set). 
\end{proof}  

Lemma \ref{lem:TisJ} essentially tells us that $\Lbod$ is a bijection between sets in $\C_{e_n,1}$ and convex bodies which include the segment $[-e_n,e_n]$, are included in the slab $\{ |\iprod{\cdot}{e_n} |\le 1\}$, and are invariant to reflections about $e_n^\perp$. Moreover, the mapping $T$ is the pull back of the mapping $L\mapsto -L^\circ$ on this class. 

In particular, the invariant sets for $T$ correspond to convex sets invariant under $L\mapsto -L^\circ$. The only centrally symmetric such set is the ball, but in general there are many sets invariant with respect to this operations, such as  the simplex (which is not in the class), a half-slab (which is in the class), and many more. 

\begin{rem}
	It is useful to note that the set $K_0 = \{ (x,y)\in \RR^{n}\times \RR^+: |x|^2 + 1 \le |y|^2\}$ corresponds to the ball under this transform. More precisely we have
	\[ \Lbod(K_0) = B_2^n.\]
	Indeed, \begin{eqnarray*} \{(x/s, 1/s): s\ge \sqrt{1+|x|^2}\} &=& \{ a(x/\sqrt{1+|x|^2}, {1}/{\sqrt{1+|x|^2}}): a\le 1\}\\
		&=& \{ (y,z): |y|^2 + z^2 \le 1, \,\,z\ge 0\}.
	\end{eqnarray*}
	
\end{rem}

\subsection{A Blaschke-Sanatal\'o type inequality}

Blaschke-Sanatal\'o type inequalities are a well investigated topic in convexity, but many intriguing open problems remain. The classical inequality  states that $\vol(K)\vol(K^\circ)\le \vol(B_2^n)^2$ for all centrally symmetric bodies $K$ in $\RR^n$, see e.g. \cite[Section 1.5.4]{AGAbook1}. For a non-centrally symmetric body, one needs to translate the body first (otherwise both the body and its dual can be huge). For example, the inequality is true when the origin is the bodies center of gravity, see \cite{meyer-pajor} or   \cite{AGAbook2}. Many variants of the Blaschke-Santal\'o inequality have been proven along the years, see, among others, \cite{FradeliziMeyer2007, Artstein2004santalo, lutwak1997blaschke, gao2003intrinsic},  where different size functionals and  duality operations are considered, or see \cite[Section 9.3.1-2]{AGAbook2}  and references therein. The reverse question, of bounding the volume product from below, is the famous Mahler's conjecture  \cite{Mahler},  for a discussion and   more references see \cite[Section 8.1]{AGAbook1}. The three dimensional centrally symmetric case of Mahler's conjecture was recently settled in \cite{IriyehShibata}. 
Some open problems of Blaschke-Sanatal\'o type include Cordero's conjecture \cite{Cordero}, which stipulates that $\vol(K\cap L)\vol(K^\circ\cap L)\le \vol(B_2^n\cap L)^2$
for centrally symmetric convex bodies $K, L$ (a version of this inequality where intersection is replaced by a new notion of 2-intersection was proved in \cite{klartag2007}).

In this subsection we will prove Theorem \ref{thm:newBStypeMAIN1}, which is a 
Blaschke-Sanatal\'o type inequality for the Gaussian measure and the new dual polarity. Central symmetry is replaced by a condition which we call ``essential symmetry''. 

\begin{definition}
	Let	$K\in \C\subset \P(\RR^n)$. We say $K$ is \emph{``essentially symmetric''} if for $u$ such that $K\in C_u$, identifying \ $\RR^n = u^\perp \times \RR u$, we have $(x,t)\in K$ implies $(-x, t)\in K$. Since $K\in \C_u$ can be interpreted as the epi-graph of a convex function   $\v:u^\perp \to \RR^+u$, essential symmetry is equivalent to $\v$ being an even function ($\v(-x) = \v(x)$ for $x\in u^\perp$). 	 
\end{definition}

We recall the statement of the main theorem we are proving in this section 

\begin{manualtheorem}{\ref{thm:newBStypeMAIN1}}
	Let $K\in\C \subset \P(\RR^n)$ be essentially symmetric.   Then 
	\[ \gamma_{n} (K)\gamma_{n}(TK) \le \gamma_{n}(K_0)^2,\]
	where $K_0 = \{ (x,y)\in \RR^{n-1}\times \RR^+: |x|^2 + 1 \le |y|^2\}$.   
\end{manualtheorem}

\begin{rem}
	Note that in this setting, one cannot simply shift a set in the class to be centered, however, since the Gaussian measure is a probability measure, it may be the case that the following theorem still holds without the essential symmetry assumption. 
\end{rem}

To prove Theorem \ref{thm:newBStypeMAIN1} we will use the above interpretation of $K$ as the $J$ image of some convex body, and an observation on the Jacobian of $F$, which was given in the paper \cite{artstein-florentin-segal}.

\begin{lem}
	Let $K \in \C_{e_n,1}\subset \P(\RR^n)$. Then 
	\[ \gamma_{n}(K)  = \nu(JK) \]
	where $d\nu(x,z) = (2\pi)^{-n} e^{-|x|^2/2z^2} e^{-1/2z^2}z^{-(n+1)}dxdz$ is defined on $\RR^{n-1}\times \RR^+$.
\end{lem}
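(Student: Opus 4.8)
The plan is to compute both sides as integrals over $\RR^{n-1}\times\RR^+$ and show they agree after a change of variables, using that $K\in\C_{e_n,1}$ is the epigraph of a convex function $\varphi$ with $\varphi(0)=1$, so that $JK = F(K)$ where $F(x,t)=(x/t,1/t)$. First I would set up $\gamma_n(K) = (2\pi)^{-n/2}\int_K e^{-(|x|^2+t^2)/2}\,dx\,dt$, writing the region as $\{(x,t):t\ge\varphi(x)\}$. Then I would apply the substitution $(x,t)\mapsto(y,z)=F(x,t)=(x/t,1/t)$, whose inverse is $(x,t)=(y/z,1/z)$; this is precisely the point map defining $J$, and it carries $K$ bijectively onto $JK$ since $J$ was defined as $JK=F(K)$.

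The key step is the Jacobian computation for $F^{-1}$: one has $x = y/z$, $t = 1/z$, so the Jacobian matrix of $(y,z)\mapsto(x,t)$ is block triangular with the $(n-1)\times(n-1)$ block $\frac{1}{z}I$ coming from $\partial x/\partial y$, a column $\partial x/\partial z = -y/z^2$, and $\partial t/\partial z = -1/z^2$, giving $|\det| = z^{-(n-1)}\cdot z^{-2} = z^{-(n+1)}$. This is exactly the observation from \cite{artstein-florentin-segal} referenced before the lemma. Substituting $|x|^2 = |y|^2/z^2$ and $t^2 = 1/z^2$ into the Gaussian density turns $e^{-(|x|^2+t^2)/2}$ into $e^{-|y|^2/2z^2}e^{-1/2z^2}$, and combining with the Jacobian and the constant $(2\pi)^{-n/2}$ — note the discrepancy with the stated $(2\pi)^{-n}$, which I would treat as a normalization to be reconciled, possibly absorbing a factor or a typo — one obtains precisely $\int_{JK}(2\pi)^{-n}e^{-|y|^2/2z^2}e^{-1/2z^2}z^{-(n+1)}\,dy\,dz = \nu(JK)$.

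I would be slightly careful about the boundary behavior: $K$ may be unbounded and $\varphi$ may take the value $+\infty$ on part of $\RR^{n-1}$, so the region of integration in $(y,z)$ coordinates is contained in the slab $z>0$ (with $z\le 1$ forced only near the minimum), and the substitution is a diffeomorphism of the open region $t>0$, $x\in\RR^{n-1}$ onto the open region $z>0$, $y\in\RR^{n-1}$, so no measure-zero set is lost. The only genuine content beyond bookkeeping is the Jacobian factor $z^{-(n+1)}$ together with the matching of exponents, and that is the main (and only mild) obstacle; everything else is the change-of-variables formula applied to a monotone transformation. I would conclude by noting that since $F$ maps the epigraph region bijectively onto $JK$ and the integrand transforms exactly into the density of $\nu$, the identity $\gamma_n(K)=\nu(JK)$ follows.
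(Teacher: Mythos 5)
Your proof is correct and follows essentially the same route as the paper, which likewise reduces the statement to the observation that the differential of $F(x,z)=(x/z,1/z)$ is upper triangular with Jacobian $z^{-(n+1)}$ and then applies the change-of-variables formula. The normalization discrepancy you flag is real — the computation yields $(2\pi)^{-n/2}$ rather than the stated $(2\pi)^{-n}$ — but this is a harmless inconsistency in the paper's definition of $\nu$ that does not affect the subsequent Blaschke--Santal\'o type inequality, since only ratios of $\nu$-measures matter there.
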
  

\begin{proof}
	The mapping  $F(x,z)  = (x/z, 1/z)$ (which is the point map corresponding to $J$) has a differential which is an upper triangular matrix with diagonal entries $(z^{-1}, \ldots z^{-1}, z^{-2})$. Therefore its Jacobian is $z^{-(n+1)}$. 
\end{proof}

Using Lemma \ref{lem:TisJ}, namely that polarity for $\Lbod K$ corresponds to $T$ for $K$ up to a minus sign,  Theorem \ref{thm:newBStypeMAIN1} will follow once we show the following theorem.

\begin{thm}\label{thm:nu-BS-symmetric}
	Let $L\subseteq \RR^{n}$ be a centrally symmetric convex body which includes the segment $[-e_n,e_n]$, is included in the slab $\{ |\iprod{\cdot}{e_n} |\le 1\}$, and is invariant to reflections about $e_n^\perp$. Then 
	\[ \nu(L) \nu (L^\circ) \le \nu (B_2^n)^2,
	\]
	where, as above,  $d\nu(x,z) = (2\pi)^{-n} e^{-|x|^2/2z^2} e^{-1/2z^2}z^{-(n+1)}dxdz$ on $\RR^{n-1}\times \RR^+$ (and with $d\nu(x,z)$ is $0$ for $z\le 0$).
\end{thm}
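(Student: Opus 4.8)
The plan is to reduce the statement to the classical Blaschke--Santal\'o inequality by an appropriate change of variables, exploiting the very special structure that the measure $\nu$ has relative to the body $L$. First I would record the obvious but essential reduction: since $L$ is invariant under reflections about $e_n^\perp$, so is $L^\circ$, and both $L$ and $L^\circ$ are centrally symmetric; moreover $[-e_n,e_n]\subseteq L \subseteq \{|\iprod{\cdot}{e_n}|\le 1\}$ forces $[-e_n,e_n]\subseteq L^\circ$ and $L^\circ\subseteq\{|\iprod{\cdot}{e_n}|\le1\}$ as well, so $L^\circ$ lies in the same admissible class. Then $B_2^n = (B_2^n)^\circ$ is the natural conjectured extremizer, and it belongs to the class. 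The content is to prove the inequality for $\nu$ in place of Lebesgue measure.

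The key idea is that $\nu$, written in the $(x,z)$-coordinates on $\RR^{n-1}\times\RR^+$ with $d\nu(x,z) = (2\pi)^{-n}e^{-|x|^2/2z^2}e^{-1/2z^2}z^{-(n+1)}dx\,dz$, is precisely the pushforward of the standard Gaussian $\gamma_n$ on the upper half-space under the map $F^{-1}$, i.e. under $(u,s)\mapsto (u/s,1/s)$ up to the Jacobian already computed in the preceding lemma. So $\nu(L) = \gamma_n(F^{-1}L)$. The crucial geometric observation (this is exactly Lemma~\ref{lem:TisJ} read backwards, together with the remark that $\Lbod(K_0)=B_2^n$) is that if $L$ is admissible then $F^{-1}L$, suitably interpreted via $\Lbod$, is the epigraph-type cone-like set $K\in\S$ with $\Lbod(K) = L$, and $F^{-1}(L^\circ)$ corresponds in the same way to $-L^\circ = \Lbod(TK)$, hence to $TK$; thus $\nu(L)\nu(L^\circ) = \gamma_n(K)\gamma_n(TK)$ and $\nu(B_2^n)^2 = \gamma_n(K_0)^2$. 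In other words Theorem~\ref{thm:nu-BS-symmetric} and Theorem~\ref{thm:newBStypeMAIN1} are literally the same statement transported across the $\Lbod$-bijection, so I only need to prove one of them. I would prove Theorem~\ref{thm:nu-BS-symmetric}, because there the hypotheses are phrased for an honest convex body and the classical machinery is directly available.

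To prove $\nu(L)\nu(L^\circ)\le\nu(B_2^n)^2$ for admissible $L$, the approach is a Gaussian Blaschke--Santal\'o argument. The measure $\nu$ is \emph{not} a product of a log-concave density with Lebesgue measure in the $(x,z)$ variables, but after the change of variables $(x,z)\mapsto (u,s) = (x/z,1/z)$ it becomes the standard Gaussian restricted to $\{s>0\}$, and under this change $L\mapsto \widehat L := F(L)$ while $L^\circ\mapsto$ a set which, by the polarity computation in Lemma~\ref{lem:TisJ}, is $F(L^\circ) = -F(-L^\circ)^{\sharp}$-type object — more concretely, the pair $(\widehat L, \widehat{L^\circ})$ is, up to the reflection $s\mapsto -s$ which $\gamma_n$ ignores on the class, again a Santal\'o-type dual pair of the \emph{convex} sets $\widehat L$ and $\widehat{L^\circ}$ with respect to a modified pairing coming from $J$. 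The cleanest route is: (i) use evenness in the $x$-variables to symmetrize, reducing to the case where $L$ (hence $\widehat L$) is symmetric under $x\mapsto -x$ and $s\mapsto -s$; (ii) apply the Gaussian Blaschke--Santal\'o inequality of Lehec / Artstein-Avidan--Klartag--Milman (for symmetric convex bodies, $\gamma_n(K)\gamma_n(K^\circ)\le\gamma_n(B_2^n)^2$, or its functional Pr\'ekopa--Leindler-type proof), to the bodies obtained after transport; (iii) identify the extremizer: equality in the classical/Gaussian Santal\'o forces (in the symmetric case) the body to be an ellipsoid, which combined with the normalization $[-e_n,e_n]\subseteq L\subseteq\{|\iprod{\cdot}{e_n}|\le1\}$ and $e_n^\perp$-reflection symmetry pins $L=B_2^n$, giving $K=K_0$.

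The main obstacle I anticipate is step (ii): the transported pair is dual with respect to the $J$-twisted pairing, not the standard inner product, so one cannot literally cite the classical Santal\'o inequality — one has to run the Ball/Lehec-style argument (either a direct Pr\'ekopa--Leindler inequality applied to the relevant densities, or Lehec's Gaussian Santal\'o proof via the functional inequality) adapted to the measure $\nu$ itself, carefully tracking that the log-concavity needed is supplied by convexity of $\v$ (equivalently of $L$) and that the Gaussian weight $e^{-1/2z^2}$ in the $z$-variable interacts correctly with the polarity. Concretely I expect to need a Pr\'ekopa--Leindler inequality of the form: if $f(x,z)g(y,w)\le h(\tfrac{x+y}2,\cdot)^2$ along the curve dictated by the $J$-pairing, then $\int f\cdot\int g\le(\int h)^2$, with $f = \ind_L\,d\nu/dx\,dz$, $g=\ind_{L^\circ}\,d\nu/dx\,dz$, $h=\ind_{B_2^n}\,d\nu/dx\,dz$; verifying the pointwise hypothesis is the computational heart and is where essential symmetry (evenness) is genuinely used. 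The rest — reductions, identification of $K_0\leftrightarrow B_2^n$, the equality analysis — should be routine given the $\Lbod$-dictionary already set up in this section.
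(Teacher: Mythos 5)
Your setup is sound: the reduction to a Pr\'ekopa--Leindler-type argument with the $\nu$-densities of $L$, $L^\circ$ and $B_2^n$, the observation that $L^\circ$ lies in the same admissible class, and the dictionary with Theorem \ref{thm:newBStypeMAIN1} via $\Lbod$ are all correct (though the first two paragraphs mostly re-derive the equivalence that the paper uses in the \emph{other} direction, to deduce Theorem \ref{thm:newBStypeMAIN1} from the present statement, so they do not advance the proof of the statement itself). However, the heart of the argument --- the pointwise hypothesis for Pr\'ekopa--Leindler --- is exactly what you leave unverified, and the formulation you propose for it is not the right one. An $n$-dimensional Pr\'ekopa--Leindler applied to the full indicator-weighted densities $\ind_L\,d\nu$, $\ind_{L^\circ}\,d\nu$, $\ind_{B_2^n}\,d\nu$ with a midpoint (or ``$J$-twisted curve'') condition would require an inclusion of the type ``combination of $L$ and $L^\circ$ sits inside $B_2^n$'' at the level of points, which is false in general for a Santal\'o pair; the classical functional-Santal\'o proofs avoid this precisely by not working with indicators. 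So as stated the plan stalls at the step you yourself flag as the computational heart.

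The paper's proof resolves this by slicing in the last coordinate rather than running a full-dimensional inequality. Writing $\nu(L)=(2\pi)^{-1}\int_0^1 e^{-1/2z^2}z^{-2}\,\gamma_{n-1}(\{y:(zy,z)\in L\})\,dz$, one sets $f(s)$, $g(t)$, $h(r)$ to be these one-variable integrands for $L$, $L^\circ$ and $B_2^n$, and proves the \emph{geometric-mean} coupling $f(s)g(t)\le h(\sqrt{st}\,)^2$; the multiplicative one-dimensional Pr\'ekopa--Leindler then gives $\int f\int g\le(\int h)^2$. The pointwise inequality is verified by two ingredients you do not supply: the elementary bound $e^{-1/2s^2-1/2t^2}\le e^{-1/st}$, and the geometric fact that if $(sx,s)\in L$ and $(ty,t)\in L^\circ$ then $\iprod{x}{y}\le(1-st)/st$, so that the rescaled slices $A=\{x:(sx,s)\in L\}$ and $B=\{y:(ty,t)\in L^\circ\}$ satisfy $\sqrt{st/(1-st)}\,B\subseteq\bigl(\sqrt{st/(1-st)}\,A\bigr)^\circ$. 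At that point the $(n-1)$-dimensional Gaussian Blaschke--Santal\'o inequality for symmetric sets (Lemma \ref{lem:gaussian-BS}, due to Cordero-Erausquin; this is where the evenness in the $x$-variables is used) closes the estimate. So the correct use of the Gaussian Santal\'o inequality is on the $(n-1)$-dimensional slices inside the pointwise hypothesis, not on the transported $n$-dimensional bodies; without the slicing and the polar inclusion of slices, the proof is not complete. (Also, no equality-case analysis is needed for the inequality, so your step (iii) is superfluous.)
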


We will use the following inequality which was established in \cite{Cordero} for even dimensions, and as Cordero-Erausquin observes, holds in all dimensions by using Steiner symmetrizations as in Meyer and Pajor's  proof of the Santal\'o inequality \cite{meyer-pajor}.

\begin{lem}\label{lem:gaussian-BS}
	For a centrally symmetric convex set $L\subseteq \RR^n$ we have 
	\[ \gamma_n(L) \gamma_n(L^\circ) \le \gamma_n(B_2^n)^2.\] Moreover, for any $\alpha>0$ we have that 
	$\gamma_n(\alpha L) \gamma_n(\alpha L^\circ) \le \gamma_n(\alpha B_2^n)^2$. 
\end{lem}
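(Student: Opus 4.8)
\textbf{Proof plan for Lemma \ref{lem:gaussian-BS}.}

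The plan is to reduce the Gaussian Blaschke--Santal\'o inequality for centrally symmetric convex bodies to the classical (Lebesgue) one via Steiner symmetrization, following the scheme of Meyer and Pajor. The first observation is that the functional $L \mapsto \gamma_n(L)$ is invariant under orthogonal maps and, crucially, under Steiner symmetrization: if $S_u L$ denotes the Steiner symmetral of $L$ with respect to the hyperplane $u^\perp$, then $\gamma_n(S_u L) = \gamma_n(L)$. This is because the Gaussian density factorizes as $e^{-|x|^2/2} = e^{-\langle x,u\rangle^2/2}\cdot e^{-|P_{u^\perp}x|^2/2}$, so by Fubini one integrates the one--dimensional Gaussian over each fiber, and the fiber lengths are preserved by Steiner symmetrization while the measure of a symmetric interval of fixed length only increases when the interval is centered — but since $L$ is already centrally symmetric, each fiber through the center is already a symmetric interval, and for the general fibers one uses that the Gaussian weight on $u^\perp$ only depends on $P_{u^\perp}x$, so the fiberwise integral is exactly the length of the fiber times a constant; hence equality, not just inequality. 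Here it is important that we assume $L$ centrally symmetric so that Steiner symmetrization keeps it centrally symmetric and we stay within the class where equality of Gaussian measure holds.

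Second, I would recall the key polarity fact used by Meyer--Pajor: for a convex body $L$ symmetric about $u^\perp$ (not necessarily about the origin, but here we do have central symmetry), one has the inclusion $S_u(L^\circ) \subseteq (S_u L)^\circ$. Combining this with the monotonicity and Steiner--invariance of $\gamma_n$ gives
\[
\gamma_n(L)\,\gamma_n(L^\circ) = \gamma_n(S_u L)\,\gamma_n(S_u(L^\circ)) \le \gamma_n(S_u L)\,\gamma_n((S_u L)^\circ).
\]
Thus the product $\gamma_n(L)\gamma_n(L^\circ)$ does not decrease under Steiner symmetrization. Since one can choose a sequence of Steiner symmetrizations converging (in Hausdorff distance) to a Euclidean ball $r B_2^n$, and since both $\gamma_n$ and polarity are continuous with respect to Hausdorff convergence on bodies with the origin in the interior, we obtain
\[
\gamma_n(L)\,\gamma_n(L^\circ) \le \gamma_n(r B_2^n)\,\gamma_n((r B_2^n)^\circ) = \gamma_n(r B_2^n)\,\gamma_n(r^{-1} B_2^n).
\]
Finally I would check that $\gamma_n(r B_2^n)\gamma_n(r^{-1}B_2^n) \le \gamma_n(B_2^n)^2$; writing $g(r) = \gamma_n(rB_2^n) = c_n\int_0^r s^{n-1}e^{-s^2/2}\,ds$, this amounts to showing $\log g(r) + \log g(1/r) \le 2\log g(1)$, i.e. that $r\mapsto \log g(e^r)$ is concave (or at least that its value at $r$ plus at $-r$ is maximized at $0$), which follows from a direct computation with the density, or more slickly from the fact that $g$ is a log-concave-type function of $\log r$. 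This also yields the ``moreover'' statement: replacing $L$ by $\alpha L$ throughout, the same symmetrization argument gives $\gamma_n(\alpha L)\gamma_n(\alpha L^\circ) \le \gamma_n(\alpha r B_2^n)\gamma_n(\alpha r^{-1} B_2^n)$, and one checks $\gamma_n(\alpha r B_2^n)\gamma_n(\alpha r^{-1} B_2^n) \le \gamma_n(\alpha B_2^n)^2$ by the same one--variable concavity argument applied to $r \mapsto \log g(\alpha e^r)$.

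The main obstacle I anticipate is the verification that Steiner symmetrization preserves (rather than merely fails to decrease) the Gaussian measure and simultaneously does not decrease the Gaussian volume product; one must be careful that the Gaussian weight depends on the coordinate along $u$, so one cannot blindly quote the Lebesgue statement. The cleanest route is to Steiner--symmetrize only with respect to hyperplanes $u^\perp$ through the origin (which suffices to converge to a ball when starting from a centrally symmetric body), decompose $\gamma_n = \gamma_1 \otimes \gamma_{n-1}$ along the $u$--direction, and note that for a set symmetric under $x \mapsto x - 2\langle x,u\rangle u$ the fiber over each point of $u^\perp$ is a symmetric interval, so its $\gamma_1$--measure depends only on its length — exactly the quantity Steiner symmetrization preserves. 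Since Cordero-Erausquin's argument in \cite{Cordero} already carries this out in even dimensions and the extension to all dimensions is exactly this Steiner scheme, I would state the reduction and refer to \cite{Cordero, meyer-pajor} for the symmetrization details, then only spell out the final one--variable inequality $\gamma_n(\alpha r B_2^n)\gamma_n(\alpha r^{-1}B_2^n)\le\gamma_n(\alpha B_2^n)^2$.
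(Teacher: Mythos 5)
The paper itself does not prove this lemma: it is imported from Cordero-Erausquin \cite{Cordero}, with the remark that the general-dimensional case follows by Steiner symmetrization as in Meyer--Pajor \cite{meyer-pajor}. Your proposal is an attempt to actually carry out that symmetrization scheme, so the route is the intended one, and your final one-variable reduction is correct: with $g(r)=\gamma_n(rB_2^n)$ one has $rg'(r)/g(r)=\bigl(\int_0^1\tau^{n-1}e^{r^2(1-\tau^2)/2}\,d\tau\bigr)^{-1}$, which is decreasing in $r$, so $s\mapsto\log g(e^s)$ is concave and both $\gamma_n(rB_2^n)\gamma_n(r^{-1}B_2^n)\le\gamma_n(B_2^n)^2$ and its $\alpha$-shifted version follow.

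There is, however, a genuine error in your first step. Steiner symmetrization does \emph{not} preserve the Gaussian measure of a centrally symmetric convex body, and the justification you give --- that ``the fiberwise integral is exactly the length of the fiber times a constant'' --- is false: decomposing $\gamma_n=\gamma_1\otimes\gamma_{n-1}$ along the direction $u$, the $\gamma_1$-measure of a fiber $[a,b]$ is $\tfrac{1}{\sqrt{2\pi}}\int_a^b e^{-t^2/2}\,dt$, which depends on the position of the interval and not only on its length $b-a$. Central symmetry ($L=-L$) does not force the fibers of $L$ in direction $u$ to be centered on $u^\perp$ (consider a rotated ellipse in the plane); that would require reflection symmetry about $u^\perp$, a strictly stronger property which moreover is not maintained when you subsequently symmetrize in other directions. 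What is true, and all the argument needs, is the inequality $\gamma_n(S_uL)\ge\gamma_n(L)$: recentering a fiber of fixed length can only increase its $\gamma_1$-measure. Since your displayed chain uses this quantity only in the direction $\le$, replacing the ``$=$'' by ``$\le$'' repairs the step; combined with the Meyer--Pajor inclusion $S_u(L^\circ)\subseteq(S_uL)^\circ$ the product $\gamma_n(\cdot)\gamma_n((\cdot)^\circ)$ is still nondecreasing along the symmetrization sequence, and the rest of your argument (including the dilated version for the ``moreover'' clause, since $S_u$ commutes with dilations) goes through.
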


\begin{proof}[Proof of Theorem \ref{thm:nu-BS-symmetric}]
	Given   $L$ as in the statement of the theorem, note that 
	\begin{eqnarray*} \nu (L) &=& (2\pi)^{-n} \int_0^1 e^{-1/2z^2}z^{-(n+1)} \left( \int_{ \{ x: (x,z) \in L\} } 
		e^{-|x|^2/2z^2} 
		dx\right) dz\\
		& = & (2\pi)^{-n} \int_0^1 e^{-1/2z^2}z^{-2} \left( \int_{ \{ y: (zy,z) \in L\} } 
		e^{-|y|^2/2}  
		dy\right) dz\\
		& = & (2\pi)^{-1} \int_0^1 e^{-1/2z^2}z^{-2} \gamma_{n-1} (\{ y: (zy,z) \in L\} ) dz.
	\end{eqnarray*}

	We define three functions on $[0,1]$: 
	\[ f(s) = e^{-1/2s^2}s^{-2} \gamma_{n-1} (\{ x: (sx,s) \in L\} ),
	\]
	\[ g(t) = e^{-1/2t^2}t^{-2} \gamma_{n-1} (\{ y:  (ty,t) \in L^\circ\} ),
	\]
	\[ h(r)  =
	e^{-1/2r^2}r^{-2} \gamma_{n-1} (\{ z: (rz,r) \in B_2^n\} ) =  e^{-1/2r^2}r^{-2} \gamma_{n-1} \big( \sqrt{\tfrac{1-r^2}{r^2}}\,B_2^{n-1} \big). 
	\]
	
	We claim that $f(s) g(t) \le h^2(\sqrt{st})$. If we can show this then we can use the multiplicative version of \PL \cite[Theorem 1.4.1]{AGAbook1} to get 
	\[ \int f \int g \le \left(\int h \right)^2,\]
	which translates to the statement of the theorem. 
	To show that the condition holds, we rewrite what needs to be shown as
	\[ \frac{e^{-1/2s^2}}{s^{2}} \gamma_{n-1} (\{ x: (x,s) \in L\} )
	\frac{e^{-1/2t^2}}{t^2} \gamma_{n-1} (\{ y:  (y,t) \in L^\circ\} )\le 
	\frac{ e^{-1/st}}{(st)^{2}} \gamma_{n-1}^2 ( \sqrt{1-st}B_2^{n-1}). 
	\]
	Note that since 
	$1/s^2 + 1/t^2 \ge 2/st $ we have 
	$e^{-1/2s^2 - 1/2t^2}\le e^{-1/st},$ and so the result will follow if we show that 
	\[  \gamma_{n-1} (\{ x: (sx,s) \in L\} )
	\gamma_{n-1} (\{ y:  (sy,t) \in L^\circ\} )\le 
	\gamma_{n-1}^2 \big( \sqrt{\frac{1-st}{st}}B_2^{n-1} \big). 
	\]
	Note that if $(sx,s) \in L$ and $(ty,t) \in L^\circ$ then we have that 
	$\iprod{x}{y} \le \frac{1 - st}{st}$. Hence, denoting ${\{ x: (sx,s) \in L\}  = : A}$  and 
	$\{ y:  (ty,t) \in L^\circ\}  = : B$ we see that, in particular, $$ \sqrt{\frac{st}{1-st}} B \subseteq\left( \sqrt{\frac{st}{1-st}} A \right)^\circ.$$

	From this inclusion, and with Lemma \ref{lem:gaussian-BS} in hand, we let $\alpha = \sqrt{\frac{1-st}{st}}$ to see that
	\begin{eqnarray*}  \gamma_{n-1} (A) 
		\gamma_{n-1}  (B)  &=&  \gamma_{n-1} (\alpha \sqrt{\tfrac{st}{1-st}}A) 
		\gamma_{n-1}  (\alpha \sqrt{\tfrac{st}{1-st}}B  )  \\&\le& \gamma_{n-1} (\alpha \sqrt{\tfrac{st}{1-st}}A ) \gamma_{n-1} \left(\alpha \big(\sqrt{\tfrac{st}{1-st}}A\big)^\circ \right) \\
		& \le & \gamma_{n-1} \big(\alpha B_2^n\big)^2 = \gamma_{n-1} \big(\sqrt{\tfrac{st}{1-st}} B_2^{n-1}\big)^2,
	\end{eqnarray*}
	which completes the proof. 
\end{proof}

\section{Invariant sets}\label{sec:fixed-points}

Finding the invariant sets of a given transform (also called ``fixed points'') is an interesting and often difficult task. Apart from the inherent interest of such questions, invariant sets can be important when analyzing extrema of certain functionals involving a set and its transform, as these extrema are in many cases (but not always) achieved on invariant sets (such is the case, for example, when the functional is symmetric with respect to taking the transform, and the extremum is unique - for instance in the classical  \BS inequality, see \cite[Subsection 1.5.4]{AGAbook1}). Another motivation to study invariant sets is that these enable us to study the possibility of two transforms being conjugate, see the paper \cite{higueras2022topological} 
for an instance where this method is used in the particular case of the polarity transform, and the general question of Anderson (for the Hilbert cube, which is quite similar to the space of convex bodies as is also illustrated there) on conjugate dualities is presented.

In this section we will study the existence and uniqueness of invariant subsets under \orqis. The two previously mentioned classical examples, Example \ref{ex:polar} and \ref{exm:Legendre}, have only one invariant set. The polarity transform in $\R^n$ (Example \ref{ex:polar}) has as its unique invariant set the Euclidean unit ball $B_2^n$. The \orqi associated with the Legendre transform (Example \ref{exm:Legendre}) has as a unique invariant set the epigraph of the function $\|x\|_2^2/2$. It is easy to check that some \orqis~ (such as the ``neighborhood complement'' in Example \ref{ex:complement-of-tneighborhood}) have no invariant sets, and some have multiple invariant sets (such as the %
``dual polarity'' discussed throughout Section \ref{sec:dual-duality}). 

We begin by stating and proving some general facts regarding invariant sets for an arbitrary order reversing quasi involution. 
These will capture many of the cases studied in the literature for well-known transforms. We analyze in detail some specific cases where more can be said, alluding to the examples previously discussed and other interesting \orqis. 

As above, we will denote an \orqi~ by $T:\P(X)\to \P(X)$. As we  established  in Theorem \ref{thm:duality-induced-by-cost} any \orqi is induced by some  symmetric  cost function $c:X\times X\to \RR$, and it is sometimes useful to use to notation   
$TK = K^c = \{ y: \forall x \in K, \, c(x,y) \ge 0\}$. Of particular importance in this section is the set 
\[ X_0 = \{ x: c(x,x) \ge 0 \}= \{ x: x\in T(\{x\})\},
\] due to the following simple fact.

\begin{lem}\label{lem:invsetinx0}
	Let $T:\P(X)\to \P(X)$ be an order reversing quasi involution. If $K = TK$ then $K\subseteq X_0$.
\end{lem}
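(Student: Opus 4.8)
The plan is to unwind the definitions. Suppose $K = TK$, and take any $x \in K$; I need to show $x \in X_0$, i.e. $x \in T(\{x\})$. Since $x \in K$, the singleton $\{x\}$ satisfies $\{x\} \subseteq K$, so by order reversion (property (ii)) we get $TK \subseteq T(\{x\})$. But $TK = K$ by hypothesis, so $K \subseteq T(\{x\})$, and in particular $x \in T(\{x\})$, which is exactly the statement $x \in X_0$. Hence $K \subseteq X_0$.

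There is really only one step, and the only thing to be careful about is that the argument uses nothing more than order reversion together with the hypothesis $K = TK$ — the quasi involution property (i) is not even needed. Equivalently, in cost language: if $x \in K = TK$, then for all $z \in K$ we have $c(z,x) \ge 0$; applying this with $z = x$ gives $c(x,x) \ge 0$, so $x \in X_0$. I would present the set-theoretic version since it is cleaner and matches the way $X_0$ is introduced via $X_0 = \{x : x \in T(\{x\})\}$.

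There is no real obstacle here; the lemma is essentially an observation, and the "hard part" is merely making sure the reader sees that membership of $x$ in the invariant set $K$ forces $c(x,x) \ge 0$ through the monotonicity of $T$ applied to the inclusion $\{x\} \subseteq K$. I would keep the proof to two or three sentences.

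\begin{proof}
	Assume $K = TK$ and let $x\in K$. Then $\{x\}\subseteq K$, so by the order reversion property (ii) we have $TK \subseteq T(\{x\})$. Since $TK = K$, this gives $K\subseteq T(\{x\})$, and in particular $x\in T(\{x\})$, i.e. $x\in X_0$. As $x\in K$ was arbitrary, $K\subseteq X_0$.
\end{proof}
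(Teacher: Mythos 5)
Your proof is correct and is essentially identical to the paper's: both take $x\in K$, apply order reversion to $\{x\}\subseteq K$ to get $TK\subseteq T(\{x\})$, and then use $K=TK$ to conclude $x\in T(\{x\})$. No issues.
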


\begin{proof}
	Let $x\in K$, then $\{x\}\subseteq K$ so we have $TK\subseteq T(\{x\}) $ and as $x \in TK = K$ we see that $x\in T(\{x\})$ so that $x\in X_0$. Since this holds for every $x\in K$ the claim follows.
\end{proof}

Clearly, if $X_0 = \emptyset$ (and $X$ is not the empty set, which is a standing assumption in this note) then there is no invariant set for $T$ because $T\emptyset = X$ (see Proposition \ref{prop:intersection-and-dual}). This is the case in Example \ref{ex:complement-of-tneighborhood}.

There are other instances of \orqis~with no invariant sets, for example one may consider a space $X$ with {\em any} \orqi $T$ and construct a space $X\cup\{z\}$ for some new point $z \not\in X$ such that $c(z,z)=-1$ and $c(x,z) = 1$ for all $x\neq z$. It terms of $T$, we define $\tilde{T}K= TK \cup\{z\}$ if $z\not\in K$ and $\tilde{T}K= T(K\setminus \{z\})$ if $z\in K$. In such a case, $\tilde{T}$ will have no invariant sets.

However, the theorem below shows that when $X_0 = X$ then
invariant sets always exist. In fact, for any $K_0$ with $TK_0 \supseteq K_0$ there exists an invariant set including  $K_0$. Moreover, when $X_0 \neq \emptyset$, omitting the condition $X_0 = X$ still there are always ``almost invariant sets'', namely sets for which $TK \cap X_0 = K$, and in some cases we can use this to show the existence of invariant sets, see 
Remark \ref{rem:AremarkonA}.

\begin{manualtheorem}{1.4}
	Let $T:\P(X)\to \P(X)$ be an \orqi and let $X_0$ as above. Let $K_0\subseteq X$ satisfy $K_0 \subseteq TK_0$. Then there exists some $K \subseteq X$ with $K_0 \subseteq K$ and such that $TK \cap X_0 = K$. In particular, if $X=X_0$ then for any $x_0\in X$ there exists  $K$  with $x_0 \in K =  TK$.
\end{manualtheorem}
\begin{proof}
	Consider the class ${\mathcal S}$ of subsets $S\subseteq X$ with $K_0\subseteq S$ and $TS\supseteq S$.  In terms of the cost $c$ this means $c(x,y) \ge 0$ for all $x,y\in S$.   The class $\mathcal S$ is non-empty, as it includes $K_0$. 
	Moreover, given a chain $S_\alpha$  (with respect to inclusion) in ${\mathcal S}$, consider the union of its elements $S_\infty = \cup S_\alpha$. Then $K_\infty$ still belongs to ${\mathcal S}$ as it will still include $K_0$ and for any $x,y\in S_\infty$ there is some $\alpha$ with 
	$x,y\in S_\alpha$ so that $c(x,y) \ge 0$ as well, implying $S_\infty \subseteq TS_\infty$.  
	Thus the conditions of Zorn's lemma are satisfied, that is, every chain has an upper bound. Therefore one may find a maximal element in ${\mathcal S}$. Denote it by $K$. Clearly $K\subseteq TK$ (since $K\in {\mathcal S}$), which means in particular $K\subseteq X_0$. We claim that $TK \cap X_0  = K$. Indeed, give $z\in TK \setminus K$ we know by maximality of $K$ that 
	$K_1 = K\cup \{z\}$ cannot belong to ${\mathcal S}$. In other words, $T(K_1) \not\supseteq K_1$. But as $c(x,z) \ge 0$ for all $x\in K$ (since $z\in TK$) and $c(x,y) \ge 0$ for all $x,y\in K$, this means that $c(z,z) <0$, that is, $z\not\in X_0$, as claimed.  Therefore, $K = TK\cap X_0$.  
\end{proof}

The only case where a multitude of invariant sets may occur, is when  $TX_0$ is a proper subset of $X_0$. 
In other cases, there is either no invariant set, or exactly one. More precisely we prove the following lemma.

\begin{lem}\label{lem:number-of-inv-sets}
	Let $T:\P(X)\to \P(X)$ be an \orqi and denote ${X_0 = \{x\in X: c(x,x)\ge 0\}}$,  as above.    
	\begin{enumerate}
		\item If $T X_0 = X_0$ then $X_0$ is the unique invariant set for the transform.
		\item If $ T X_0 \not\subseteq X_0$ then there is no invariant set for the transform.
		\item If $ T X_0\subsetneq X_0$ then there are examples where no invariant set exists,  examples  where only one invariant set exists, and examples  where more than one invariant set exists.  
	\end{enumerate} 
\end{lem}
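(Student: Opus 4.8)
\textbf{Proof plan for Lemma \ref{lem:number-of-inv-sets}.} The plan is to treat the three cases separately, with parts (1) and (2) following from the general facts already established and part (3) handled by exhibiting explicit examples.

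For part (1), suppose $TX_0 = X_0$. Then $X_0$ is itself an invariant set, so existence is clear. For uniqueness, suppose $K = TK$ is any invariant set. By Lemma \ref{lem:invsetinx0} we have $K \subseteq X_0$. Applying $T$ and using order reversion gives $TK \supseteq TX_0 = X_0$, i.e. $K = TK \supseteq X_0$. Combining the two inclusions yields $K = X_0$, so $X_0$ is the unique invariant set.

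For part (2), suppose $TX_0 \not\subseteq X_0$ and, towards a contradiction, that $K = TK$ is an invariant set. Again Lemma \ref{lem:invsetinx0} gives $K \subseteq X_0$, hence (order reversion) $X_0 = TX_0 \subseteq \cdots$ wait---more carefully: $K \subseteq X_0$ implies $TK \supseteq TX_0$, so $K = TK \supseteq TX_0$. But $K \subseteq X_0$ while $TX_0 \not\subseteq X_0$, contradicting $TX_0 \subseteq K \subseteq X_0$. Hence no invariant set exists.

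For part (3), suppose $TX_0 \subsetneq X_0$; here no general conclusion holds and the content is to produce the three claimed behaviors by examples. For "more than one invariant set", the dual polarity transform from Example \ref{ex:dual-polarity}/Section \ref{sec:dual-duality} is the natural candidate: via Lemma \ref{lem:TisJ} its invariant sets correspond to convex bodies invariant under $L \mapsto -L^\circ$ in the relevant class, of which there are many (ball, half-slab, etc.), while $X_0$ is strictly larger than its image. For "exactly one invariant set" one expects a transform where $X_0$ shrinks under $T$ but the contraction has a unique fixed point; a modified polarity-type cost, or the construction of adjoining a point as in the discussion preceding Theorem 1.4 applied to an \orqi with a unique invariant set, should do. For "no invariant set" one can again use an adjunction construction: take an \orqi with $TX_0 = X_0$ on some $X$, adjoin a bad point $z$ with $c(z,z) = -1$ and $c(x,z)=1$ for $x \ne z$ (as described after Lemma \ref{lem:invsetinx0}); then $X_0$ enlarges to $X_0 \cup \{z\}$ whose $T$-image is $X_0 \subsetneq X_0 \cup \{z\}$, yet $\tilde T$ has no invariant set. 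The main obstacle is the bookkeeping in part (3): one must verify for each example both that $TX_0 \subsetneq X_0$ holds (so the example is genuinely in case (3)) and that it exhibits the asserted number of invariant sets; the dual-polarity example relies on the structural results of Section \ref{sec:dual-duality}, and the two adjunction examples require checking that the modification preserves the \orqi axioms and computing the new $X_0$ and its image carefully.
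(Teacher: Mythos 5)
Parts (1) and (2) are correct and coincide with the paper's argument: combine Lemma \ref{lem:invsetinx0} with order reversion to get $TX_0 \subseteq K \subseteq X_0$ for any invariant $K$, which forces $K=X_0$ in case (1) and yields a contradiction in case (2).

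Part (3), however, has genuine problems. First, your ``no invariant set'' example does not lie in case (3). Adjoining $z$ with $c(z,z)=-1$ and $c(x,z)=1$ for $x\neq z$ does \emph{not} enlarge $X_0$: since $X_0=\{x: c(x,x)\ge 0\}$ and $c(z,z)=-1<0$, the new $X_0$ equals the old one. But then $\tilde T X_0 = TX_0\cup\{z\} = X_0\cup\{z\}\not\subseteq X_0$, so this construction lands in case (2), where the nonexistence of invariant sets is already the general statement; it proves nothing about case (3). Second, for ``exactly one invariant set'' you do not actually produce an example --- ``a modified polarity-type cost \dots should do'' is a placeholder, and since the whole content of part (3) is the existence of the three examples, this is a real gap, not bookkeeping. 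Your ``more than one'' example via dual polarity is correct (one checks $TX_0=\emptyset\subsetneq X_0$ there, and Section \ref{sec:dual-duality} supplies several invariant sets), but it imports heavy machinery. The paper instead settles all three sub-cases with tiny finite costs: a four-point space ($c(1,1)=c(2,2)=c(1,3)=c(2,4)=1$, symmetrized, $-1$ otherwise) with $X_0=\{1,2\}$, $TX_0=\emptyset$, and no invariant set; a three-point analogue with exactly one invariant set $\{2\}$; and the cost $c(x,x)=1$, $c(x,y)=-1$ for $x\neq y$ on any set with at least two points, for which every singleton is invariant. You should replace your two defective constructions with examples of this kind, verifying in each that $TX_0\subsetneq X_0$ actually holds.
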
  

\begin{proof}
	For (1), if $T K = K$ then $K\subseteq X_0$ by Lemma \ref{lem:invsetinx0}, and  by order reversion we get that $K = TK  \supseteq T X_0 = X_0$ and so $K = X_0$.  
	
	To see that (2) holds,  by the same reasoning,  if $T K = K$ then we have that
	$ X_0 \supseteq K = TK  \supseteq T X_0$, and so if this inclusion does not hold, there will be no invariant sets.   
	
	For (3), let us give three simple examples: \textit{(i)} To give an example where no invariant set exists despite $X_0^c\subseteq X_0$, let $X$ 	be a four point space $X = \{1,2,3,4\}$.	
	Consider the cost given by $c(1,1) = c(2,2) = c(1,3) = c(3,1) = c(2,4) = c(4,2) = 1$, and  $-1$ for any other pair. 
	If there was some invariant set, it would have to be included in $X_0  = \{1,2\}$. However $\{1\} \mapsto \{1,3\}$ and $\{2\} \mapsto \{2,4\}$, whereas $X_0^c = \emptyset$ (and as usual $\emptyset \mapsto \{1,2,3,4\}$).

	\textit{(ii)}  An example with just one invariant set, despite $X_0^c\subsetneq X_0$, can be given with a three point space. Let $X = \{1,2,3\}$ and set $c(1,1) = c(2,2) = c(1,3) = c(3,1) = 1$ and all other pairs to have cost $-1$. In this case it is easy to check that $X_0 = \{1,2\}$ and $X_0^c = \emptyset$. The only invariant set is $\{2\}$, and the other subsets of $\{1,2\}$ satisfy $\{1\}\mapsto \{1,3\}$ and $\{1,2\}\mapsto \emptyset$ whereas $\emptyset \mapsto \{1,2,3\}$ of course. 
	
	\textit{(iii)}	Finally, fix some $X=\{x_i\}_{i\in I}$ with at least two elements. Let $c(x_i,x_i) = 1$ and for $x_i\neq x_j$ we let $c(x_i,x_j) = -1$. Then any $\{x_i\}$ is an invariant set, $X_0 = X$, and $X_0^c = \emptyset$. 
\end{proof}

While a complete characterization of invariant sets for general \orqis~ seems difficult, because of the various scenarios which can occur, the above facts capture many of the interesting examples. 
Our two favorite examples, classical polarity and Legendre transform, correspond to 
case (1) in Lemma \ref{lem:number-of-inv-sets}, where for polarity on $\RR^n$ we have   $X_0 = B_2^n$ and  for the set transform associated with Legendre we have $X_0 = \{ (x,t)\in \RR^n \times \RR : t\ge \|x\|^2/2\}$. 
In the case where $X_0 = \emptyset$, in which $TX_0 = X$, the lemma implies that there can be no invariant set
(unless $X = \emptyset$, a rather empty setting), but this was already clear from 
Lemma \ref{lem:invsetinx0} and Proposition \ref{prop:intersection-and-dual}. This is the case in Example \ref{ex:complement-of-tneighborhood}, for instance. 

\subsection*{Invariant sets for the flower duality} 

Another example with a unique invariant set corresponding to case (1) in the lemma is that of complements of flowers, see Example \ref{ex:flowers}. Recall that the associated set $S$ defining the transform is $S=\{(x,y): \sp{x,y}< \frac 12 |x|^2|y|^2\}$. In this case the invariant set is  ${X_0 = \RR^n \backslash \sqrt{2}B_2^n= \{x: \|x\|\ge \sqrt{2}\}}$. Indeed,  for $x,y\in X_0$ we have that
\[ \frac12 |x|^2|y|^2 \ge |x||y|\ge \iprod{x}{y},\]
where the first inequality holds since $|x|,|y|\ge \sqrt{2}$, and the second is due to Cauchy-Schwartz. This implies that $X_0 \subseteq T X_0$. For the other direction, assume $x\notin X_0$, hence $|x|< \sqrt{2}$. Choosing $y=\sqrt{2}\frac{x}{|x|}\in X_0$ we get that $\frac12 |x|^2|y|^2 =|x|^2$ and $\iprod{x}{y}=\sqrt{2}|x|$. Since the latter is clearly larger we get that $(x,y)\notin S$ which implies $x\notin TX_0$. So $TX_0=X_0$, and Lemma \ref{lem:number-of-inv-sets} item (1) tells us that this is the only invariant set.

\subsection*{Invariant sets for Rotem's transform} 
Let us discuss one other case of an \orqi with only one invariant set, namely  the \orqi we alluded to in 
Remark \ref{rem:Liran-duality}, which was considered by Rotem in   \cite{Liran-sharpBS}.

Rotem considered the functional transform $T_R$ given by 
\begin{equation*}\label{eq:rotem-transform}
 {{T_R}}\v (y) = \sup_{x\in \R^{n-1}} \frac{1+\sp{x,y}}{\v(x)},
\end{equation*}

which corresponds, as a set transform for epi-graphs in  $\RR^{n-1} \times \RR^+$, to a cost function $c((x,t), (y,s)) = ts-\iprod{x}{y} -1 \ge 0$. The set $X_0$, therefore,  consists of 
\[ \{(x,t):   t^2 \ge  |x|^2 +1  \}. \]
It is easy to check  that $v(x) = \sqrt{|x|^2 + 1}$ is an invariant function, and we once again see an instance with exactly one invariant set.

\subsection*{Invariant sets for the dual polarity} 
We turn our attention to the new duality transform presented in Example \ref{ex:dual-polarity} and discussed in Section \ref{sec:dual-duality}. 

While at first seemingly similar to Rotem's transform, the situation is completely different when we use the dual-polarity transform as in Example \ref{ex:dual-polarity}, or, for an even simpler case, the modified transform where in $T_R$ we add a negative sign, getting the transform in Proposition \ref{prop:functional-form-or-dp}.
Let \[ {{T_N}}\v (y) = \sup_{x\in \R^{n-1}} \frac{1-\sp{x,y}}{\v(x)},\]
which corresponds, as a set transform for epi-graphs in  $\RR^{n-1} \times \RR^+$, to a cost function $c((x,t), (y,s)) = \iprod{x}{y}+ts -1 \ge 0$. The set $X_0$, therefore,  consists of 
\[ \{(x,t):   t^2 \ge   1 - |x|^2  \}, \]
which is not in the associated class an in particular is not an in variant set. 
In other words, the ``dual polarity'' transform, to which Section \ref{sec:dual-duality} was devoted, is an \orqi corresponding to case (3) in Lemma \ref{lem:number-of-inv-sets} that has many invariant sets. 

Indeed, here are a few different invariant sets of the above transform: 

\[K_0 =  \{(x,t):   t^2 \ge  |x|^2 +1  \}, \]
\[K_1 =  \{(x,t):   x\ge 0, t\ge  1  \}, \]
and in $\RR^2$, for example, 
\[ K_2 =  \{ (x,t) : x\ge 0 {\rm ~and~} t\ge x + 1{\rm~or~}-1\le x\le 0{\rm ~and~}t \ge 1 {\rm~or~}  x\le -1{\rm ~and~}t \ge -x\}.            \]
In fact, in $\RR^2$ we may give the general form of an invariant set for this transform, as the following Lemma explains. 

\begin{prop}
	Consider the cost $c((x,t), (y,s)) = xy+ts-1$ on $\RR^2$, and the associated transform $T: A\mapsto A^c$. 
	Let $K\subset \RR^2_+$ such that $e_2 =(0,1) \in K$ is its closest point to the origin, i.e. $K\in \S=\C_{e_2,1}$. Define  $$L=(K^c\cap \{(x,t)\in \RR^2: x<0, \ t\ge 0\} ) \cup (K^{cc} \cap \RR^2_+).$$ 
	Then the set $L$ belongs to $\S$ and it is an invariant set for $T$. Moreover, any invariant set for $T$ which belongs to $\C_{e_2}$  is of this form. (In particular, this characterizes the invariant sets with respect to $T_N$.)
\end{prop}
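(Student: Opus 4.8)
The plan is to exploit the correspondence, established in Lemma~\ref{lem:TisJ}, between $T$ acting on $\C_{e_2,1}$ and the map $L\mapsto -L^\circ$ acting on symmetric convex bodies of a special shape, but here I would work directly in the plane with the functional picture of Proposition~\ref{prop:functional-form-or-dp}, since $n=2$ makes things concrete. Recall a set $K\in\S$ is the epigraph of a convex function $\varphi:\RR\to(0,\infty]$ with $\varphi(0)=1$ its minimum. Write $T=T_N$ on such functions. The idea is that $T$ is an involution on its image $\C$, but on $\S$ it is only a quasi-involution; the set $X_0$ (here $\{(x,t):t^2\ge 1-x^2\}$) is not in $\S$, so we cannot just take an invariant set inside $X_0$. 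Instead the trick displayed in the statement is to \emph{splice}: on the half-plane $x<0$ take $K^c$, and on the half-plane $x\ge 0$ take $K^{cc}$ (the envelope of $K$). The motivation is that $T$, restricted to functions, involves only $\sup_x \frac{1-\langle x,y\rangle}{\varphi(x)}$, and the competitor $x$ that is active in this supremum, for a given $y$, lies on a definite side depending on the sign of $y$; splitting the domain of the \emph{argument} $x$ of $\varphi$ and the \emph{argument} $y$ of $T\varphi$ along $\{x<0\}$ versus $\{x\ge0\}$ decouples the two half-problems.

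First I would check that $L$ as defined is a legitimate member of $\S$: it is the epigraph of some function $\psi$, $\psi>0$, convex, with $\psi(0)=1$ the minimum. Convexity is the only nontrivial point and follows because on $x<0$ we are using $(K^c)$'s boundary function, on $x\ge 0$ we are using $(K^{cc})$'s boundary function, and at $x=0$ both agree with the value $1$ (since $e_2\in K$ forces $e_2\in K^{cc}$, and one checks $\psi(0^-)=1$ too from $K\subseteq\{x:\langle x,e_2\rangle\ge 1\}$ type bounds established in Lemma~\ref{lem:class-of-dual-polarity}); one also needs the one-sided slopes to match up monotonically, which I would get from the fact that $K^{cc}\supseteq K$ and $K^c$ is the dual, so their boundary functions fit together with a convex corner at the origin. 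Then I would compute $TL$. The key computation is: for $y\ge 0$, $T_N\psi(y)=\sup_x\frac{1-xy}{\psi(x)}$ is attained at $x\le 0$ (because making $x$ negative makes the numerator bigger and, on $x\ge 0$, $\psi$ grows at least linearly so the quotient is controlled), hence $T_N\psi(y)$ depends only on $\psi|_{x\le 0}=$ boundary of $K^c$; and on $K^c$ one has $T_N$ of that function $=$ boundary of $K^{cc}$ (this is exactly $TTK|_{x\ge0}$), so $TL$ and $L$ agree on $x\ge 0$. Symmetrically, for $y<0$ the active $x$ is $\ge 0$, so $T_N\psi(y)$ depends only on $\psi|_{x\ge0}=$ boundary of $K^{cc}$, and $T_N$ of that $=$ boundary of $K^{ccc}=K^c$ (using Lemma~\ref{lem:orqi-is-duality-on-image}, $K^{ccc}=K^c$), which is what $L$ is on $x<0$. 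Hence $TL=L$.

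For the converse — that every invariant $L\in\C_{e_2}$ has this form — I would start from an arbitrary invariant $L$, first reduce to $\C_{e_2,1}$ by the scaling bijection $\C_{u,a}\to\C_{u,1}$ (noting $T$ sends $\C_{u,a}$ to $\C_{u,1/a}$, so invariance forces $a=1$). Write $L=\epi\psi$. Set $K:=\epi(\psi)\cap\RR^2_+$ completed suitably to a set in $\S$ — more precisely define $K$ by $\varphi:=\psi$ on $x\ge0$ and, say, $\varphi:=+\infty$ on $x<0$ (so $K$ is the "right half" of $L$, which is still a convex cone-like epigraph since $\psi(0)=1$). Then $K^{cc}\cap\RR^2_+$ recovers the right half of $L$ essentially by construction (the envelope does not change $\psi$ on $x\ge 0$ if $\psi$ was already the restriction of a function in $\S$), and I must show $K^c\cap\{x<0,t\ge0\}$ recovers the left half of $L$; but invariance $TL=L$ together with the one-sided computation above says exactly that $\psi|_{x<0}=T_N\psi|_{x<0}$ is determined by $\psi|_{x\ge0}$ via the same formula as $K^c$, giving the claim.

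\textbf{Main obstacle.} The delicate step is establishing precisely that in $T_N\psi(y)=\sup_x\frac{1-xy}{\psi(x)}$ the optimal $x$ lies on the side opposite in sign to $y$ (and in particular that the supremum over the "wrong" side is genuinely dominated, not merely non-strictly). This requires using that $\psi$ has minimum $1$ at $0$ and grows at least linearly away from $0$ on each ray — which is where the characterization $-1\le \L\v|_{\dom\L\v}\le 0$ from Proposition~\ref{prop:functional-form-or-dp} (equivalently the cone-like condition from Lemma~\ref{lem:class-of-dual-polarity}) enters — together with a careful case analysis at $y=0$ and at points where the sup is attained "at infinity" (i.e.\ in the direction where $\psi$ has a finite asymptotic slope). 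Handling these boundary/asymptotic cases so that the splice is clean, and verifying convexity of $\psi$ across $x=0$, are the parts that need genuine care rather than routine computation.
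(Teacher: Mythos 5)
Your plan is correct and rests on the same core observation as the paper's proof, but it packages it differently. The common mechanism is that for a pair of points $(x,t),(y,s)$ with $x,y$ of the same sign and $t,s\ge 1$, the cost condition $xy+ts\ge 1$ holds automatically, so only the opposite-sign interactions matter and these are exactly governed by the $K^c$/$K^{cc}$ duality. The paper verifies $L\subseteq L^c$ and $L^c\subseteq L$ directly by a sign case analysis on the cost; you instead localize the supremum in $T_N\psi(y)=\sup_x\frac{1-xy}{\psi(x)}$ to the half-line $\{xy\le 0\}$ and decouple the two half-problems, which is the same fact in variational clothing. Two remarks. First, the step you single out as the ``main obstacle'' is in fact immediate: for $x,y\ge 0$ one has $1-xy\le 1\le\psi(x)$, so the quotient never exceeds its value $1$ at $x=0$; no linear-growth estimate on $\psi$ is needed (and the $s\ge1$ input comes for free from $(0,1)\in K$). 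Second, your worry about convexity of $L$ across $x=0$ is legitimate but easily resolved: both $K^c$ and $K^{cc}$ are convex members of $\S$ containing the ray $\{(0,t):t\ge 1\}$, so any chord of $L$ crossing $\{x=0\}$ does so at a point of that common ray and each half of the chord lies in the corresponding piece. For the converse, the paper is slicker: since an invariant $L$ satisfies $L=L^c=L^{cc}$ and lies in the upper half-plane, one may simply take $K=L$ in the displayed formula, whereas you reconstruct $K$ as the right half of $L$; your version also works (and has the minor advantage of producing a $K$ literally contained in $\RR^2_+$ as in the statement), at the cost of the extra verification that $K^c$ and $K^{cc}$ of this half-set recover the two halves of $L$, which again reduces to the same sign-splitting argument.
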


\begin{proof}
Let $L$ be  of the form given in the statement of the proposition. It follows from Lemma \ref{lem:invariant_subclass}, and the discussion that follows it, that $L\in \S$.

To show the invariance, we first prove that $L\subseteq L^c$. Fix some $(x,t)\in L$. If $x\ge 0$ this means $(x,t)\in K^{cc}$. Then for any $(y,s) \in L$ either $y<0$ in which case $(y,s) \in K^c$ and so $xy+st \ge 1$, or $y\ge 0$ in which case, as $x,y\ge 0$ and $t,s\ge 1$, we again get $xy+ts\ge ts \ge 1$. If $x<0$ a similar argument works. This shows $L\subseteq L^c$.

For the opposite inclusion, namely that $L^c\subseteq L$, let $(x,t)\in L^c$. Assume that $x\ge 0$ and consider a point $(y,s)\in K^c$. 
Then, either $y<0$ from which it follows that $(y,s)\in  L$ and hence $xy+st\ge 1$ by the definition of the transform, or $y\ge 0$ in which case as $x\ge 0$ and $L^c,K^c\in \S$ we get that $xy+st\ge st\ge 1$. Therefore, we conclude that $(x,t)\in K^cc$, and as we assumed $x\ge 0$, we get that $(x,t)\in L$.
A similar argument shows that any point $(x,t)\in L^c$ with $x< 0$ belongs to $K^c$ and thus to $L$.
 
We conclude that all sets of the form given above are invariant. To show that all invariant sets within the sub-class $\C_{e_2}$ must be of this form, note that they must in fact belong to $\C_{e_2,1}\subset \C_{e_2}$, and that any set satisfying $K=K^c$ must be in the sub-class, i.e. $K=K^{cc}$. Finally, since the sets in $\C_{e_2,1}$ are contained in the upper half plane, we get that ${K=(K^c\cap \{(x,t)\in \RR^2: x<0, \ t\ge 0\} ) \cup (K^{cc} \cap \RR^2_+)}$.
\end{proof}

In view of Lemma \ref{lem:TisJ}, invariant sets for the dual polarity correspond, after application of $\tilde{J}$, to invariant sets for the transform $K\mapsto -K^\circ$, which also have the property that they are included in the slab $\{ (x,t): |t|\le 1\}$ and include the segment $\{(0,t): -1\le t\le 1\}$. Here again, in $\RR^2$ this characterization is quite simple, and in higher dimensions a clean way of expressing such sets is yet to be given.

One other interesting \orqi corresponding to case (3) of Lemma \ref{lem:number-of-inv-sets} whose invariant sets remain quite mysterious is the one associated with the functional transform $\A$, whose importance was established in \cite{before-hidden,ArtsteinMilmanHidden}. %
We delay the discussion of the transform and its invariant sets to the next section (see Remark \ref{rem:AremarkonA}), and only mention here the fact that many self-dual functions exist, as exhibited in \cite{rotem2012characterization}, and that using the methods developed above we can use specially designed subsets $K_0$ which, using Theorem \ref{thm:inv-sets-with-intersection}, allow us to conclude the existence of many invariant sets.

\section{Another set of examples}\label{sec:another-ex}

In this section we gather, in no particular order, more examples that are instructive for the discussion about \orqis. 
We start with examples of a functional nature, similar to the one induced by the Legendre transform presented in Example \ref{exm:Legendre}. 
When considering an \orqi on a class of sets, if these sets are epi-graphs of functions we can view the transform on them as a functional transform. The inclusion order on epigraphs translates to a pointwise inequality of the functions. We say that $T$ is an \orqi on a class $\C$ of functions $f:X\to [-\infty, \infty]$  if for any $f,g\in \C$ we have
\begin{enumerate}[(i)]
	\item $TT f \le  f$ \tabto{5.5cm} (quasi involution)
	\item $f \le g$ implies $T f \ge T g$ \tabto{5.5cm} (order reversion)
\end{enumerate}

In other words, a functional duality $T$ corresponds to an \orqi on $\P(X\times \RR)$, which we also denote by $T$, whose image class is the set of epi-graphs of functions in the image of $T$. 
We define it on sets which are epi-graphs by 
$T(\epi(f)) := \epi(Tf)$.

\begin{exm}[Functional Polarity]\label{exm:A-transf}
	Consider the class $\cvx_0(\RR^n)$ of geometric convex functions on $\RR^n$ (non-negative lower semi-continuous convex functions with minimum 0 at the origin)  and on it the order reversing involution   $\A$, given by
	\[\A\varphi(y)=\sup_{y\in \RR^n}\frac{\iprod{x}{y}-1}{\varphi(y)}.\]
	For discussion of this transform and its many properties, see \cite{ArtsteinMilmanHidden}. The family of epi-graphs of functions in $\cvx_0(\RR^n)$ is closed under intersection. Further, the transform on it can be geometrically described as a composition of standard duality in $\RR^{n+1}$ with a reflection (see again \cite{ArtsteinMilmanHidden}). Therefore, the transform is induced by the following cost 
	\[c((x,t),(y,s)) = st+ 1 -\iprod{x}{y}.  \] 
\end{exm}

\begin{rem}\label{rem:LandAon2hom}
	It is well known that $\L$ and $\A$ agree on the sub-class of $\cvx_0(\RR^n)$ consisting of 2-homogeneous  functions. 
	However, on the whole $\cvx_0(\RR^n)$ the transforms $\L$ and $\A$ differ substantially, and therefore so do their extensions to $\P(\RR^{n+1})$. We thus have an example of an \orqi (on 2-homogeneous convex functions in $\RR^n$) which admits two different extensions (See also Subsection \ref{sec: extension}). Of course, each of these extensions has a different image ($\cvx(\RR^n)$ and $\cvx_0(\RR^n)$, respectively). 
\end{rem}

\begin{rem}[{\bf Fixed points for $\A$}]\label{rem:AremarkonA}
	Let us analyze this important example with regard to its multiple invariant sets.  The rotation-invariant invariant sets for this transform were analyzed by Rotem in \cite{rotem2012characterization}. Let us start with the case of functions on the ray $\RR^+$. In this example,     a point in the epigraph can be written as $\bar{x}  = (x,t)$ with $x\in \RR^+$ and $t\ge 0$.
	The cost is given by $c((x,t), (y,s)) = st+1 -  {x}{y}$  and so $X_0$ is the set $(x,t)$ with $t^2 + 1 - x^2 \ge 0$, or $t\ge \sqrt{x^2 -1}$. 
	If we fix a point $(x_0,t_0)$ on the boundary of $X_0$, then  for points outside $X_0$, say $(y,s)$ with $s^2 < y^2-1$, we have (as clearly $x,y\ge 1$) that
	\[c((x,t), (y,s)) = st+1 -  {x}{y} 
	= s\sqrt{x^2 -1} +1 -  {x}{y} < \sqrt{y^2 -1}\sqrt{x^2 -1} +1 -  {x}{y}\le 0.
	\] 
	In other words, having chosen some $\bar{x}$ on the boundary of $X_0$, and taken $K$ to be an invariant set which includes $\bar{x}$ (which exists by Theorem \ref{thm:inv-sets-with-intersection}), we get that $K^c\subseteq X_0$ automatically and so $K^c = K$. In other words, for this specific example, for any point in the boundary of $X_0$, there exists an invariant set which includes this point. It is not unique, however, see 
	\cite{rotem2012characterization} for a full characterization. 
	
	Moreover,  no two different points on the boundary of $X_0$ can belong to the same invariant set (as the inequality above remains valid, where the strictness of the inequality follows from strictness of the arithmetic geometric inequality when the points are different).  This is a reason for the multitude of invariant sets in this specific case.  %
	
	In higher dimensions a similar argument can be used to construct various non-rotation-invariant sets. For example, consider the set in $\RR^3$ given by  
	\[
	K_0 = \{ (1,y,|y|): y\in \RR\}\cup \{   (-1,y,|y|): y\in \RR
	\}.
	\]
	Note that for any pair of points in this set we have
	\begin{eqnarray*}
		c((1,y,|y| ) , (1,z,|z|)  )  &=& |y||z| + 1 - \iprod{(1,y)}{(1,z)} = |y||z| - \iprod{y}{z} \ge 0\\
		c((1,y,|y| ) , (-1,z,|z|)  )  &=&  |y||z| + 1 - \iprod{(1,y)}{(-1,z)} = 2+ |y||z| - \iprod{y}{z} \ge 0
	\end{eqnarray*}
	which means $K_0 \subseteq TK_0$. 
	Theorem \ref{thm:inv-sets-with-intersection} then implies that there exists some ``almost invariant set'' including it, that is, some $K = TK\cap X_0$ such that $K_0 \subset K$. However, similar to the two dimensional example above, the set $K_0$ includes specially designed points on the boundary of $X_0$ which in fact imply that $TK = K$.
	
	In fact, in this specific example, it is not hard to check that $K = K_1 := \conv(K_0)$ and this set   satisfies $TK_1= K_1$. Indeed, since sets in the class must be convex, any $K$ which includes $K_0$ must include $K_1$. The fact that $TK_1 = K_1$ can be checked manually: $K_1\subset TK_1$ by the same argument as before, since if $x,a,\in (-1,1)$,  $z\ge |y|$ and $d\ge |b|$ then 
		\begin{eqnarray*}
		c((x,y,z ) , (a,b,d)  )  &\ge & |z||d| + 1 - \iprod{(x,y)}{(a,b)} = |z||d| +1 - xa - \iprod{y}{b} \ge 0.
	\end{eqnarray*}
	
	Next take $(a,b,d)\notin K_1$ and apply a simple case analysis for finding $(x,y,z)\in K_1$ such that $c((x,y,z),(a,b,d))< 0$: If $a>1$   take $(x,y,z)=(1,0,0)$ and if $a<-1$ take $(x,y,z)=(-1,0,0)$. If $-1\le a \le 1$ then by assumption and $d<|b|$. Write $d=|b|-\varepsilon$ for some $\varepsilon>0$, and let $x=0$, $y={\rm sign}(b)z$ and $z>1/\varepsilon$.  Then $(x,y,z)\in K_1$  
	since it is the convex combination of $(-1,y,|y|)$ and $(1,y,|y|)$. However, $c((x,y,z),(a,b,d)) = (|b|-\varepsilon)z +1 - |b|z  <0$. We conclude that no point outside $K_1$ can be in $TK_1$, so that $TK_1 = K_1$. 
\end{rem}

The  example of functional polarity $\A$, as well as $\L$, both correspond (up to composition with a monotone one-dimension function) to a 
special class of functional  \orqis~ which are used in optimal transport theory. These are the so-called cost-transforms (see \cite{villani-book}), defined as follows.

\begin{definition}\label{def:func-c-trans}
	Let $c:X\times X \to (-\infty,\infty]$ be some cost function. For $\psi:X\to [-\infty, \infty]$  its {\bfseries $c$-transform} is given by
	\begin{equation*}\label{eq:c-transform} 
	\psi^c(x)=\inf_{y\in X} (c(x,y)-\psi(y)). \end{equation*}
\end{definition}

\begin{exm}[Hypo-graphs]\label{ex:functional-cost-trans}  
	In general, a cost transform
	induces a transform on hypo-graphs of functions, namely on sets $K\subseteq \RR^{n+1}$ such that for any fixed $x\in \RR^n$ the set {$\{t: (x,t)\in K\}$} is of the form $(-\infty, a]$ for some $a\in [-\infty, \infty]$, which is a class  closed under intersection.  Call $\{(x,t): t\le \psi(x)\} = \hypo(\psi)$. Given $T: \C\to \C$ where $\C$ is a class of functions $\psi: \RR^n \to [-\infty, \infty]$, which satisfies $TT\psi \ge \psi$ and $\psi \le \varphi$ implies $T\psi \ge T\varphi$ we let 
	\[ T(\hypo (\psi)) := \hypo (T\psi). \] 
	For example, $\psi \mapsto \psi^c$ satisfies these properties (see, e.g., \cite{paper1}). 
	
	As before, since $\psi^{cc} \ge \psi$ we have that $\hypo (\psi^{cc}) \supseteq \hypo(\psi)$ so that $T$ is an \orqi on the class of hypo-graphs of functions in $\cvx_0(\RR^n)$.  
	
	It is easily checked that the cost function corresponding  to the \orqi on sets is given by 
	\[ c((x,t), (y,s)) = c(x,y)-s-t.\] 
	
\end{exm}

We remark that there exist much more sophisticated functional quasi involutions in various settings, for instance, see \cite{berndtsson2019complex} for various Legendre type transforms on complex functions.

Going back to some 
trivial examples,  we consider \orqis~ whose image consists of a finite number of sets. In this case, determining the various \orqis~ reduces to a question of a combinatorial nature. %

\begin{exm}[Simple \orqis]
	\begin{enumerate} 
		\item When there is a single element in the class, it must be all of $X$ and then the \orqi~  is such that all sets are mapped to $X$.
		\item  
		When the image includes two sets they must be $K\neq X$ and $X$, and the only possible duality takes all subsets of $K$ to $X$, and anything which is not a subset of $K$ to $K$.
		\item %
		An \orqi~ whose image has three elements must have as these three elements the set $X$, and two comparable sets $K_1\subseteq K_2$, where $T$ switches $X$ and $K_1$, while $K_2$ is a fixed point for $T$. The pre-image of $X$ is precisely all $L$ such that $L \subseteq K_1$. 
		The pre-image of $K_2$ consists of subsets of $K_2$ which are not subsets of $K_1$. All other sets are mapped to $K_1$. 
		\item 	A similarly trivial example is when ${\mathcal C} = \{ X, K_0, K_1, K_2\}$ where $K_0 \subseteq K_1\subseteq K_2$, and the mapping switches the two, namely $TK_1 = K_2$ and $TK_2 = K_1$.  
		If $K\subseteq K_0$ then $T(K) = X$, if $K \subseteq K_1$ and $K\not\subseteq K_0$ then $ {T}(K) = K_2$, if  $K  \subseteq K_2$ and $K\not \subseteq K_1$ then $TK = K_1$ and if $K \not\subseteq K_2$ then $TX = K_0$.  
	\end{enumerate}
\end{exm}

One can of course continue in this manner, a path which we will not pursue here.

Moving on, we discuss \orqis~corresponding to some well known classes of bodies. 

\begin{exm}[Unconditional bodies]
	Let $X=\RR^n$ and consider the set 
	\[ S=\{(x,y): \sum_{i=1}^n |x_i||y_i|\le 1\}.\] 
	We claim that the associated class is the class of unconditional convex bodies and that on it, the  associated transform is the usual polarity. Moreover, for a general set, applying this transform twice produces the smallest unconditional body which contains the set (which we call its unconditional convex hull). Finally, the only invariant set is the Euclidean unit ball. 
	
	To show the first claim, note that if $x= (x_i)_{i=1}^n\in K^c$ then also $(\pm x_i)_{i=1}^n\in K^c$ since the cost depends only on the absolute values of the coordinates. In particular, we see that  the  class consists only of unconditional bodies. The fact that on unconditional bodies the transform induces the usual polarity is immediate since $c(x,y) = 1-\sup_{\pm} \iprod{(\pm x_i)_{i=1}^n}{y}$. It follows that all unconditional convex bodies are in the image, therefore in the $c$-class. 
	
	By Proposition \ref{prop:TTKisenvelope} we get that $TTK$ is the smallest unconditional body which includes $K$, and $TK$ is its polar. 
\end{exm}

Our next example has to do with 
the space $X=\RR^n$ and the subset $\C\subseteq \P(X)$ of closed star shaped sets, namely those sets $A\subseteq \R^n$ which satisfy $A=\{\lambda x: x\in A\,\  0\le \lambda\le 1\}$. Given a closed star shaped set $A$, denote by $g_A$ its gauge function which uniquely determines it. We discuss cost dualities whose image is this class. As it turns out, there are various different dualities on the class.

\begin{exm}[Star-shaped]\label{example:star-shaped1}
	Consider the cost 
	\[c(x,y) =\begin{cases}
	-\iprod{x}{y}+1 & y\in \R_+x\\
	+\infty & \text{otherwise}
	\end{cases}.\]
	The image of the $c$-transform is the class of star shaped bodies, and the $c$-duality on it takes the gauge function $g$ to $1/g$.
	
	Indeed, consider a closed star shaped $A$. Then 
	\begin{eqnarray*}
		A^c &=& \{ y\in \RR^n: \forall x\in A,  c(x,y) \ge 0 \}\\
		& = &  \{ y\in \RR^n: \forall x\in [0, y/g_A(y)],  \iprod{x}{y} \le 1 \}\\
		& = &  \{ y\in \RR^n:   \iprod{y/g_A(y)}{y} \le 1 \}\\
		& = &  \{ y\in \RR^n:   |y|^2\le g_A(y)  \}.
	\end{eqnarray*}
	This set is clearly closed, star shaped, and in fact  for $u\in S^{n-1}$, if  $g_A(u) = \lambda$ then
	\[g_{A^c}(u) = \inf \{ r: u/r\in A^c\} = \inf \{ r: |u/r|^2 \le 
	g_A(u/r) \} = \inf \{ r: 1/r  \le 
	\lambda  \}  = 1/\lambda=1/g_A(u). 
	\] 
	In particular, $A\mapsto A^c$ is an involution on the class of closed star shaped domains (not necessarily bounded, and $0$ can be on the boundary). It is also easy to check that $A^c$ is always closed and star shaped.
	
\end{exm}

\begin{rem}
	We may generalize the cost from Example \ref{example:star-shaped1} to get a large family of dualities on star shaped domains as follows. For any $u\in S^{n-1}$ let $f_u: [0, \infty)\to [0, \infty)$ be an increasing bijective function, such that $f_u$ depends continuously and symmetrically on $u$. Let $F:S^{n-1}\to S^{n-1}$ be continuous and satisfy $F\circ F = Id$. Then
	\[c(x,y) =\begin{cases}
	-f_{y/|y|}(\iprod{x}{|y|F(y/|y|)})+1 & y/|y|=F(x/|x|)\\
	+\infty & \text{otherwise}
	\end{cases}\]
	is a cost whose image is, again, closed star shaped sets. 
\end{rem}

\section{Some additional useful facts}\label{sec:some additional}
In this section we gather many more  general facts about \orqis, which we did not want to include in the previous sections so as not to overcrowd them. These include the investigation of necessary and sufficient conditions for the possibility to extend an \orqi defined on a subset of $\P(X)$,  the possibility of composition of maps to induce new \orqis, and the restriction of \orqis~to be defined on subsets of some $Y\subseteq X$.

\subsection{Extensions of order reversing quasi involutions}\label{sec: extension}
In this subsection we will study when and how an order reversing quasi involution $T$ defined on a subfamily $\C\subseteq \P(X)$ can be extended to the whole of $\P(X)$. This serves to  explain the somewhat surprising  phenomenon that while, say, polarity for sets, is naturally defined on convex subsets which include the origin, it can be defined on a general subset of $\RR^n$. Similarly, while the natural domain for Legendre transform is $\cvx(\RR^n)$, one may define the Legendre transform of any function. This turns out to be a feature of \orqis~with a certain additional condition. 

We begin by showing uniqueness of the extension,  namely that an order reversing quasi involution on  $\C\subseteq \P(X)$ cannot have more than one extension to $\P(X)$ if the image is to remain $T(\C)$. 

\begin{prop}\label{prop:image determines all} 
	Let $T_1, T_2: \P(X) \to \P(X)$ be \orqis, and assume $T_1(\P(X)) = T_2(\P(X)) =: {\mathcal C}_0\subseteq \P(X)$, and further assume $T_1|_{{\C}_0} = T_2|_{{\C}_0}$. Then $T_1 = T_2$. 
\end{prop}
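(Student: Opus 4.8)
The plan is to exploit the fact, established in Lemma~\ref{lem:orqi-is-duality-on-image}, that an \orqi~restricts to an honest order reversing involution on its image, together with the ``envelope'' description from Proposition~\ref{prop:TTKisenvelope}. Write $\C_0 = \Im(T_1) = \Im(T_2)$, and observe that for $i=1,2$ the map $T_iT_i$ sends an arbitrary $K\subseteq X$ to the smallest member of $\C_0$ containing $K$: this is exactly the content of Proposition~\ref{prop:TTKisenvelope}, whose statement depends only on the class $\C_0$ (which is the same for both maps) and not on how $T_i$ is defined off $\C_0$. Hence $T_1T_1K = T_2T_2K$ for every $K\subseteq X$; call this common envelope $\widehat K$.

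The key step is then the following. Given $K\subseteq X$, I want to show $T_1 K = T_2 K$. First I would argue that $T_iK = T_i\widehat K$ for $i=1,2$: indeed $K\subseteq \widehat K$ gives $T_iK \supseteq T_i\widehat K$ by order reversion, while $\widehat K = T_iT_iK$ together with the quasi-involution property applied to $T_iK$ (namely $T_iT_iT_iK \subseteq T_iK$, which is half of Lemma~\ref{lem:orqi-is-duality-on-image}) gives $T_i\widehat K = T_iT_iT_iK \supseteq T_iK$; combining yields equality. Now $\widehat K\in \C_0$, so by hypothesis $T_1\widehat K = T_1|_{\C_0}(\widehat K) = T_2|_{\C_0}(\widehat K) = T_2\widehat K$. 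Chaining the three identities gives
\[ T_1 K = T_1\widehat K = T_2\widehat K = T_2 K, \]
and since $K$ was arbitrary, $T_1 = T_2$.

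I do not anticipate a serious obstacle here; the only point requiring a little care is verifying that Proposition~\ref{prop:TTKisenvelope} genuinely yields the \emph{same} envelope for both transforms — this is clear because the right-hand side $\cap\{L : L\supseteq K,\ L = TTL\}$ is phrased in terms of the fixed-point sets of $T_iT_i$, and $\{L : L = T_1T_1L\} = \C_0 = \{L : L = T_2T_2L\}$ since the fixed points of $TT$ are precisely the image of $T$ (as $TTL = L$ for $L\in \Im T$ by the involution property on the image, and conversely any fixed point of $TT$ is trivially in $\Im(TT)\subseteq \Im T$). Alternatively, one can avoid Proposition~\ref{prop:TTKisenvelope} altogether and prove $T_1T_1 = T_2T_2$ directly from the two axioms, but routing through the envelope characterization makes the argument most transparent.
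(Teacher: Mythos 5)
Your proof is correct and follows essentially the same route as the paper's: both reduce $T_iK$ to the value of $T_i$ on the smallest member of $\C_0$ containing $K$ (you identify this envelope as $T_iT_iK$ via Proposition~\ref{prop:TTKisenvelope}, while the paper computes it inline as $\cap\{L\in\C_0: K\subseteq L\}$) and then invoke the agreement of $T_1$ and $T_2$ on $\C_0$. One cosmetic slip: your parenthetical quotes the inclusion $T_iT_iT_iK\subseteq T_iK$, which is the wrong half of the identity $T_iT_iT_iK=T_iK$ for your purpose — the half you actually use, $T_iK\subseteq T_iT_iT_iK$, is precisely the quasi-involution axiom applied to the set $T_iK$, as you correctly say in words.
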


\begin{proof}
	We let $T$ stand for either $T_1$ or $T_2$ and show that $TL = T\left(\cap \{K\in {\mathcal C}_0: L \subseteq K\}\right)$, which in particular, implies that $T_1$ and $T_2$ are equal.
	Note that ${\mathcal C}_0$ is closed under intersections, and includes $X$ as $TTX = X$. Moreover, as $T$ is an involution on ${\mathcal C}_0$, we know that letting $K_0 = \cap_{K\in {\mathcal C}_0} K$ we have $TK_0 = X$ and $TX = K_0$.  
	Clearly all subsets of $K_0$ are mapped to $X$ as well.

	Let $L\in P(X)\setminus {\mathcal C}_0$, and let $L_0 = \cap \{K\in {\mathcal C}_0: L \subseteq K\}$. The intersection is non-empty as $L\subseteq X$, and by the fact that $\C_0$ is closed under intersection,  $L_0\in \mathcal C_0$. By order reversion, as $L\subseteq L_0$, we know $TL\supseteq TL_0$ (and $TTL \subseteq TTL_0 = L_0$). 
	On the other hand, as $L \subseteq TTL$ and $TTL \in {\mathcal C}_0$, we know that $TTL$ participates in the intersection defining $L_0$, namely $L_0 \subseteq TTL$. By order reversion, and the involution property of $T$ on ${\mathcal C}_0$, we get $TL_0 \supseteq TTTL = TL$. That is, $TL = TL_0$. 
\end{proof}

\begin{rem}
	Within the proof  we have shown the useful fact that for an order reversing quasi involution with image ${\mathcal C}_0$, 
	\[  TL = T\left(\cap \{K\in {\mathcal C}_0: L \subseteq K\}\right).\] 
\end{rem}

We proceed to showing that given an order reversing quasi involution on a subset ${\mathcal C}$ of $\P(X)$, which  is closed under intersection, and with  $X\in {\mathcal C}$, we may extend it to the full space $\P(X)$. 

\begin{lem}\label{lem:extending}
	Let ${\mathcal C}\subseteq \P(X)$ be a class of sets which is closed under intersection, namely if $(K_i)_{i\in I} \subseteq \C$ then $\cap_{i\in I} K_i \in \C$, with $X\in \C$.  Given an order reversing quasi involution  $T:{\mathcal C}\to {\mathcal C}$, there exists  an order reversing quasi involution $\hat{T} :\P(X) \to \P(X)$ with $\hat{T}|_{\mathcal C} = T$, and $ {\rm Im}(\hat{T})=\C$. 
\end{lem}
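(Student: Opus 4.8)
\textbf{Proof plan for Lemma \ref{lem:extending}.}

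The plan is to define the extension by the explicit formula that Proposition \ref{prop:image determines all} forces upon us: for an arbitrary $L \subseteq X$, set
\[
\hat{T}L = T\!\left( \bigcap \{ K \in \mathcal{C} : L \subseteq K \} \right).
\]
The intersection is over a non-empty family (it contains $X$), and since $\mathcal{C}$ is closed under intersection the set $\langle L\rangle := \bigcap\{K\in\mathcal{C}: L\subseteq K\}$ is the smallest member of $\mathcal{C}$ containing $L$; in particular $\langle L\rangle = L$ whenever $L\in\mathcal{C}$, so $\hat{T}|_{\mathcal{C}} = T$ is immediate. The map $L \mapsto \langle L\rangle$ is a closure operator: it is monotone ($L\subseteq L'$ implies $\langle L\rangle\subseteq\langle L'\rangle$), extensive ($L\subseteq\langle L\rangle$), and idempotent ($\langle\langle L\rangle\rangle = \langle L\rangle$), all of which follow directly from the definition and the intersection-closedness of $\mathcal{C}$.

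First I would verify order reversion of $\hat{T}$: if $L\subseteq L'$ then $\langle L\rangle \subseteq \langle L'\rangle$ by monotonicity of the closure, and applying the order reversion of $T$ on $\mathcal{C}$ gives $\hat{T}L' = T\langle L'\rangle \subseteq T\langle L\rangle = \hat{T}L$. Next, the quasi-involution property $L \subseteq \hat{T}\hat{T}L$. Here I would argue: $\hat{T}L = T\langle L\rangle \in \mathcal{C}$, so $\langle \hat{T}L\rangle = \hat{T}L = T\langle L\rangle$, and therefore $\hat{T}\hat{T}L = T(\langle \hat{T}L\rangle) = T(T\langle L\rangle) = TT\langle L\rangle \supseteq \langle L\rangle \supseteq L$, using the quasi-involution property of $T$ on $\mathcal{C}$ applied to $\langle L\rangle$ and the extensivity $L\subseteq\langle L\rangle$. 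This establishes that $\hat{T}$ is an \orqi~on $\P(X)$. Finally, $\mathrm{Im}(\hat{T}) = \mathcal{C}$: every $\hat{T}L$ lies in $\mathcal{C}$ by construction, and conversely every $K\in\mathcal{C}$ satisfies $K = T(TK) = \hat{T}(TK) \in \mathrm{Im}(\hat{T})$ since $T$ is an involution on $\mathcal{C}$ (Lemma \ref{lem:orqi-is-duality-on-image}, noting $\mathrm{Im}(T) = \mathcal{C}$ because $T$ is onto $\mathcal{C}$ as an involution there).

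The only genuinely delicate point — and thus the main thing to be careful about rather than a real obstacle — is making sure that $T$ is indeed an involution (not merely a quasi-involution) on all of $\mathcal{C}$, so that identities like $TTK = K$ for $K\in\mathcal{C}$ are available. This is not automatic from Definition \ref{def:set-duality}; one needs that $\mathcal{C} = \mathrm{Im}(T)$, which holds here because an \orqi~on a class that is a bijection issue: $T:\mathcal{C}\to\mathcal{C}$ with $K\subseteq TTK$ need not be surjective a priori. The clean fix is to first observe that on $\mathcal{C}$ closed under intersection with $X\in\mathcal{C}$, the set $TTK$ is again the closure-type envelope and $TTK = K$ for all $K\in\mathcal{C}$ follows from Proposition \ref{prop:TTKisenvelope} applied within $\mathcal{C}$ together with the fact that intersection-closedness makes every $K\in\mathcal{C}$ equal to its own envelope; alternatively one checks surjectivity directly. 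Once that is in place the rest is the routine verification sketched above, and I would also remark that this recovers, as special cases, the extension of polarity from $\mathcal{K}_0^n$ and of the Legendre transform from $\cvx(\R^n)$ to the respective full power sets.
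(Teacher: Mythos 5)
Your construction is exactly the paper's: the same ``$\C$-hull'' formula $\hat{T}L = T\left(\cap\{K\in\C : L\subseteq K\}\right)$, the same verification of order reversion, and the same chain $L\subseteq\langle L\rangle\subseteq TT\langle L\rangle=\hat T\hat T L$ for the quasi-involution property, which --- as your own computation shows --- uses only the inequality $K\subseteq TTK$ and never the identity $TTK=K$. Consequently the ``delicate point'' you isolate at the end is needed only for the single clause $\mathrm{Im}(\hat T)\supseteq\C$, and there your proposed repair does not work: Proposition \ref{prop:TTKisenvelope} describes $TTK$ as the intersection of those $L\supseteq K$ satisfying $L=TTL$, so deducing $TTK=K$ from it presupposes exactly what is at issue, and intersection-closedness of $\C$ does not force $T$ to be onto $\C$ (take $\C=\{A,X\}$ with $A\subsetneq X$ and $TA=TX=X$: this is an \orqi on $\C$, closed under intersection with $X\in\C$, whose image is $\{X\}$, so $\mathrm{Im}(\hat T)=\{X\}\neq\C$). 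To be fair, the paper's own proof is equally terse on this clause --- it really only establishes $\mathrm{Im}(\hat T)\subseteq\C$ --- and the corollary that follows adds $\mathrm{Im}(T)=\C$ as an explicit hypothesis, which is the clean way to close the gap. Apart from this one surjectivity caveat, your argument is correct and coincides with the paper's.
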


\begin{proof}%
	Let $\C_0 = \{TK : K \in \C\}$ and let $L_0\in \C$ denote the intersection of all elements in $\C_0$ (it may be an empty set). Since $X\in \C$, and since for all $K\in \C$, $K\subseteq X$, we know that $TX \subseteq TK$ for all $K\in \C$ so  that $TX\subseteq \cap_{L\in \C_0} L = L_0$, and as $TX$ itself participates in the intersection, we see $TX = L_0$. 
	
	For $K\subseteq X$ we define 
	\[ \hat{T}(K) = T (\cap \{L\in \C: K\subseteq L\}). \] 
	First note that as $\C$ is closed under intersection, the set on which we apply $T$ (which we call the ``$\C$-hull'' of $K$) is in $\C$ as well.  We take into account here the fact that $X\in \C$, so that the intersection is over a non-empty set of elements $L$.   	
	Therefore, $T$ is well-defined. 
	Next, note that for $K\in \C$, this  ``$\C$-hull''  is $K$ itself, and therefore $\hat{T}|_\C = T$. Let us show that $\hat{T}$ is an order reversing quasi involution. For the order reversion, let $K_1\subseteq K_2$, in which case the ``$\C$-hulls'' satisfy the same inclusion $\cap \{L\in \C: K_1\subseteq L\}\subseteq \cap \{L\in \C: K_2\subseteq L\}$, and by order reversion of $T$ on $\C$ we get  
	\[ \hat{T}(K_1) = T (\cap \{L\in \C: K_1\subseteq L\})  \supseteq  T (\cap \{L\in \C: K_2\subseteq L\}) = \hat{T} (K_2).\] 
	For the quasi involution note that a set is always included in its ``$\C$-hull'', $K\subseteq \cap \{L\in \C: K\subseteq L\}$, and since $T$ is a quasi involution we get 
	\[  K\subseteq \cap \{L\in \C: K\subseteq L\} \subseteq TT(\cap \{L\in \C: K\subseteq L\}) =  \hat{T}T(\cap \{L\in \C: K\subseteq L\})  = \hat{T}\hat{T}(K).\]
	where we used (for the second equality from the right) that  $T = \hat{T}$ on $\C$, and the image of $T$ is in $\C$. Finally, due to the fact that $\hat{T}(K)\in \C$ by the definition of $\hat{T}$, we see that ${\rm Im}(\hat{T})=\C$.
\end{proof}

As a corollary we get that the image of an order reversing quasi involution on $\P(X)$ is always closed under intersection (this also follows directly from Proposition \ref{prop:intersection-and-dual}).

Some important classes of sets $\C$ to which one can apply Lemma \ref{lem:extending} are closed sets,  convex sets,  sets which include the origin, bounded sets (together with the whole space), sets which are epi-graphs (resp. hypo-graph)  of functions with values in $(-\infty, \infty]$ (resp. $[-\infty, \infty)$),   any intersection of the above, and many more.

Joining Proposition \ref{prop:image determines all} and Lemma \ref{lem:extending} we get the following corollary.
\begin{cor}
	Let $\C\subseteq \P(X)$ a family of sets which is closed under intersections, and let $T:\C\to \C$ be an \orqi on $\C$ with ${\rm Im}(T)=\C$. Then, $T$ can be \emph{uniquely} extended to an \orqi $\hat{T}:\P (X)\to \P(X)$, such that $\hat{T}|_\C =T$ and ${\rm Im}(\hat{T})=\C$.
\end{cor}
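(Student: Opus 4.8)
The plan is to obtain this corollary by stitching together the two results immediately preceding it, with essentially no new work: Lemma \ref{lem:extending} supplies the existence of an extension, and Proposition \ref{prop:image determines all} supplies its uniqueness. The whole proof amounts to checking that the hypotheses of each of these match what we are given.

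For existence, I would first verify the hypotheses of Lemma \ref{lem:extending} for the pair $(\C,T)$. By assumption $\C$ is closed under intersections, and $X\in\C$ --- this is read as part of ``closed under intersections'' (the empty intersection equals $X$), exactly as in the statement of Lemma \ref{lem:extending}; in any case it is necessary for the conclusion, since by Proposition \ref{prop:intersection-and-dual} every \orqi on $\P(X)$ has $X$ in its image, so any extension with image $\C$ already forces $X\in\C$. Since $T:\C\to\C$ is an \orqi on $\C$, Lemma \ref{lem:extending} then yields an \orqi $\hat{T}:\P(X)\to\P(X)$ with $\hat{T}|_\C=T$ and ${\rm Im}(\hat{T})=\C$, which is the desired extension.

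For uniqueness, suppose $\hat{T}_1,\hat{T}_2:\P(X)\to\P(X)$ are \orqis with $\hat{T}_i|_\C=T$ and ${\rm Im}(\hat{T}_i)=\C$ for $i=1,2$. Then $\hat{T}_1(\P(X))=\C=\hat{T}_2(\P(X))$ and $\hat{T}_1|_\C=T=\hat{T}_2|_\C$. Since ${\rm Im}(T)=\C$, Lemma \ref{lem:orqi-is-duality-on-image} tells us $T$ is an order reversing involution on $\C$, so each $\hat{T}_i|_\C$ is genuinely a self-map of $\C$, as Proposition \ref{prop:image determines all} requires. Applying that proposition with $\C_0=\C$ and $(T_1,T_2)=(\hat{T}_1,\hat{T}_2)$ gives $\hat{T}_1=\hat{T}_2$, proving uniqueness.

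I do not expect any real obstacle: all the substance lives in Lemma \ref{lem:extending} and Proposition \ref{prop:image determines all}, and this corollary is pure bookkeeping in comparison. The only point deserving a moment's attention is the tacit requirement $X\in\C$, which is needed both to invoke Lemma \ref{lem:extending} and, as observed above, for the statement to hold at all.
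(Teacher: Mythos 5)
Your proposal is correct and is exactly the paper's route: the corollary is stated right after the sentence ``Joining Proposition \ref{prop:image determines all} and Lemma \ref{lem:extending} we get the following corollary,'' so existence via Lemma \ref{lem:extending} and uniqueness via Proposition \ref{prop:image determines all} is precisely the intended argument. Your remark about the tacit hypothesis $X\in\C$ is a fair and worthwhile clarification, but it does not change the substance.
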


Recall that as the example of Legendre transform $\L$ and polarity transform $\A$ on, say, 2-homogeneous convex functions shows,  an \orqi may have more than one extension to the whole space if we do not insist on keeping the same image. For details see Remark \ref{rem:LandAon2hom}.

It turns out that if one is given an \orqi defined on a subset $\C\subseteq \P(X)$ which is not closed under intersection, it may be the case that there is no extension to an \orqi on $\P(X)$, as is the case in the following example. 

\begin{exm}\label{exm:no-extension}
	Consider the following sets $A=\{1,2\}, \, B=\{1,3\}, \, C=\{2,3\}$, as subsets of $X=\{1,2,3\}$.  Note that no two of these sets are comparable with respect to inclusion and hence any involution on these three sets will be order reversing (in an empty sense).  Set $TA = B$, $TB = A$ and $TC = C$.  We then have that $A\subseteq B\cup C$ but $TA = B\not\supseteq A \cap C = TB \cap TC$. 
	However, any extension $\hat{T}$ of $T$ would have to satisfy (by Proposition \ref{prop:intersection-and-dual}) that 
	\[  \{1,3\}  = T(A)  = \hat{T} (A) \supseteq \hat{T} (B \cup C) = \hat{T}(B) \cap \hat{T}(C) = TB \cap TC = \{2\} \]
	which is not true, therefore, an extension $\hat{T}$ does not exist. 
\end{exm}

The reason that this example fails to have an extension is quite simple. For an \orqi on $\C\subseteq \P(X)$ to admit an extension to $\P(X)$,  Proposition \ref{prop:intersection-and-dual} implies that it must satisfy the following property: 

\begin{definition}\label{def:respects-inclusions}
	Let $\C\subseteq \P(X)$ and let $T:\C\to \C$ be an order reversing quasi involution. We say that $T$ \textbf{respects inclusions} if
	for any $K,K_i \in \C$, $i\in I$ such that $K\subseteq  \cup_{i\in I} K_i$ one has that $TK \supseteq\cap_{i\in I} TK_i $.
\end{definition}

Note that by Lemma \ref{lem:extending} we know that if an \orqi $T$ is defined on a family $\C$ which is closed under intersections then it can be extended to the whole space, and in particular, Proposition \ref{prop:intersection-and-dual} implies that $T$ respects inclusions. 

It turns out that the condition of respecting inclusions is not only necessary, but sufficient for the existence of an extension, as we show next.

\begin{manualtheorem}{1.6}\label{prop:extending-general-class}
	Let $\C\subseteq \P(X)$ be a family of sets and $T:\C\to \C$ be an \orqi which respects inclusions. Then \, $T$ can be extended to an \orqi $\hat{T}:\P (X)\to \P(X)$, such that $\hat{T}|_\C =T$. 
\end{manualtheorem}  

\begin{proof}
	Given the transform $T$ define $S = \cup \{   TK \times TTK: K \subseteq X\}\subseteq X\times X$ as in Remark \ref{rem: set S}.  We may then define a cost function $c$ such that $c(x,y)=1$ if $(x,y)\in S$ and $-1$ otherwise (this is the same as in the proof of Theorem \ref{thm:duality-induced-by-cost}).  Clearly, the corresponding cost transform is defined on the whole of $\P(X)$ and we must only check that for any $K\in \C$ we have that $TK=K^c$. 
	
	To show that $TK\subseteq K^c$ note that by definition of $c$, for any $(x,y)\in TT K\times T K$ we have $c(x,y)=1$. In particular $c(x,y)=1$ for all $(x,y)\in K\times TK$, as $K\subseteq TTK$ by property (i). Hence,	\[K^c = \{y:\ \forall x\in K\ c(x,y)\ge 0\} \supseteq \{y:\  (x,y)\in K\times TK\}=TK\]	which gives $TK\subseteq K^c$.
	
	For the other direction, note that
	\begin{align*}
	K^c&=\{y: \forall x\in K \, c(x,y)\ge 0\} = \{y:\forall x\in K \,(x,y)\in S \} \\
	&= \{ y: \, \forall x\in K \, \exists L_{(x,y)}\in \C \text{ with } (x,y)\in L_{(x,y)}\times TL_{(x,y)}\}.
	\end{align*} 
	Fix some $y_0\in K^c$,  and note that the above implies that we must have $K\subseteq \cup_{x\in K} L_{(x,y_0)}$ and at the same time, since $y_0 \in TL_{(x,y_0)}$ by construction, we know that $y_0 \in \cap_{x\in K}  TL_{(x,y_0)}$. By the assumption that $T$ respects inclusions we conclude that $TK \supseteq \cap_{x\in K}  TL_{(x,y_0)}$ and hence $y_0\in TK$. Since $y_0 \in K^c$ was arbitrary, it follows that $K^c \subseteq TK$ as required. 
\end{proof}

\subsection{Composition and conjugation}\label{subsec: Com and Con}

There is a close relation between order reversion and order preservation.  We collect a few facts.

\begin{lem}
	Let $S, T: \P(X)\to \P(X)$ be two order reversing quasi involutions with the same image ${\mathcal C}$. Then $T\circ S: \P(X) \to {\mathcal C}$ is order preserving, and restricted to ${\mathcal C}$ it is a bijection. 	
\end{lem}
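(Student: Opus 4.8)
The plan is to verify the two claims — order preservation of $T\circ S$ on all of $\P(X)$, and the bijectivity of its restriction to $\mathcal C$ — using only the structure already established: that each of $S,T$ is an order reversing involution on $\mathcal C = \mathrm{Im}(S) = \mathrm{Im}(T)$ (Lemma \ref{lem:orqi-is-duality-on-image}), and the envelope description $SSK = TTK = \cap\{L \in \mathcal C : K \subseteq L\}$ (Proposition \ref{prop:TTKisenvelope}, together with the fact that $\mathrm{Im}(S)=\mathrm{Im}(T)=\mathcal C$ forces $SSK = TTK$ for every $K$).

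First I would prove order preservation. Suppose $K_1 \subseteq K_2$. By order reversion of $S$ we get $SK_1 \supseteq SK_2$, and then by order reversion of $T$ we get $T(SK_1) \subseteq T(SK_2)$; composing two order reversing maps yields an order preserving one, so $TSK_1 \subseteq TSK_2$. This is immediate and needs no further comment. Note also that $TSK \in \mathcal C$ always, since $T$ maps into $\mathcal C$; hence $T\circ S : \P(X) \to \mathcal C$.

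Next I would show that $T\circ S$ restricted to $\mathcal C$ is a bijection of $\mathcal C$ onto itself. The natural candidate for the inverse is $S \circ T$, also restricted to $\mathcal C$. The key point is that for $K \in \mathcal C$ we have $SSK = K$ and $TTK = K$, because $S$ and $T$ are involutions on $\mathcal C$. Therefore, for $K \in \mathcal C$,
\[
(S\circ T)\circ(T\circ S)(K) = S\big(TT(SK)\big) = S(SK) = K,
\]
where we used $SK \in \mathcal C$ to apply $TT = \mathrm{Id}$ on $\mathcal C$, and $K \in \mathcal C$ to apply $SS = \mathrm{Id}$ on $\mathcal C$; symmetrically $(T\circ S)\circ(S\circ T)(K) = K$ for $K \in \mathcal C$. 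Since $S\circ T$ and $T\circ S$ both map $\mathcal C$ into $\mathcal C$ (each of $S,T$ does), they are mutually inverse bijections of $\mathcal C$, which is exactly the claim.

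There is essentially no hard obstacle here; the only thing to be careful about is the bookkeeping of which sets lie in $\mathcal C$ at each stage, so that the involution identities $SS = TT = \mathrm{Id}$ may legitimately be invoked only on elements of $\mathcal C$. One should also record explicitly, as a preliminary remark, that $\mathrm{Im}(S) = \mathrm{Im}(T) = \mathcal C$ together with Proposition \ref{prop:TTKisenvelope} gives $SSK = TTK$ for all $K \subseteq X$ (both equal the $\mathcal C$-envelope of $K$); this is not strictly needed for the statement as phrased but clarifies why the two quasi involutions with a common image are so tightly linked, and it is the fact underlying the inverse computation above when one wants to extend the discussion beyond $\mathcal C$.
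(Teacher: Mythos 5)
Your proof is correct and follows essentially the same route as the paper: order preservation is the immediate composition of two order reversions, and bijectivity on $\mathcal C$ comes from the computation $TS(STA)=T(SS(TA))=TTA=A$ (and its mirror image), using that $S$ and $T$ are involutions on their common image. The extra remark that $SSK=TTK$ for all $K$ via the envelope description is a correct observation but not needed for the statement.
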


\begin{proof} Order preservation is immediate, the fact that the image is included in ${\mathcal C}$ is immediate, and bijectivity follows since if $A\in {\mathcal C}$ then $TS(STA) = TTA = A$. 
\end{proof}

Conjugating an \orqi $T$ by an order preserving bijection on ${\rm Im}(T)$, one gets an order reversing involution on the image. 

\begin{lem}
	Let $T: \P(X) \to \P(X)$ be an order reversing quasi involution, let $\C = T(\P(X))$ be its image, and let $R: {\mathcal C} \to {\mathcal C}$ be an order preserving bijection. Then $S = R^{-1}\circ T \circ R$ is an order reversing involution on ${\mathcal C}$. 
\end{lem}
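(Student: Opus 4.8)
The plan is to reduce the statement to the behaviour of $T$ on its own image, which is already pinned down by Lemma \ref{lem:orqi-is-duality-on-image}. First I would collect the three ingredients I need: (1) $R:\C\to\C$ is an order preserving bijection, so its inverse $R^{-1}:\C\to\C$ is again an order preserving bijection; (2) by Lemma \ref{lem:orqi-is-duality-on-image}, the restriction of $T$ to $\C = {\rm Im}(T)$ satisfies $T(\C)=\C$, is order reversing on $\C$, and is a genuine involution there, i.e. $T\circ T = Id$ on $\C$; (3) consequently the composition $\C \xrightarrow{\,R\,}\C\xrightarrow{\,T\,}\C\xrightarrow{\,R^{-1}\,}\C$ makes sense, so $S = R^{-1}\circ T\circ R$ indeed maps $\C$ into (in fact onto) $\C$.

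With these in hand, order reversion of $S$ is just the composition pattern ``order preserving $\circ$ order reversing $\circ$ order preserving'': given $A\subseteq B$ with $A,B\in\C$, one gets $RA\subseteq RB$ by (1), then $T(RB)\subseteq T(RA)$ by (2), and finally $R^{-1}T(RB)\subseteq R^{-1}T(RA)$ again by (1), i.e. $SB\subseteq SA$. The involution property is the short computation
\[ S\circ S = R^{-1}\circ T\circ R\circ R^{-1}\circ T\circ R = R^{-1}\circ (T\circ T)\circ R = R^{-1}\circ \mathrm{Id}_{\C}\circ R = \mathrm{Id}_{\C}, \]
where $R\circ R^{-1}=\mathrm{Id}_{\C}$ uses only that $R$ is a bijection and $T\circ T = \mathrm{Id}_{\C}$ is the involution part of Lemma \ref{lem:orqi-is-duality-on-image}.

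There is no real obstacle here; the entire content of the statement sits in Lemma \ref{lem:orqi-is-duality-on-image}, and what remains is bookkeeping about composing order preserving and order reversing maps. The one point I would phrase carefully is ingredient (1), namely that the inverse of the order preserving bijection $R$ of the poset $(\C,\subseteq)$ is again order preserving — this is exactly what ``order preserving bijection'' is taken to mean (an order isomorphism of $\C$ onto itself), and it is the only place the hypothesis on $R$ is used beyond bijectivity.
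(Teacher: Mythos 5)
Your proof is correct and follows essentially the same route as the paper's: order reversion via the ``order preserving $\circ$ order reversing $\circ$ order preserving'' composition pattern, and the involution via $T\circ T=\mathrm{Id}$ on $\C$ (the paper writes $TTRA=RA$ directly, which is exactly the content of Lemma \ref{lem:orqi-is-duality-on-image} that you cite). Your explicit caveat that ``order preserving bijection'' must be read as an order isomorphism so that $R^{-1}$ is also order preserving is a hypothesis the paper uses silently, so flagging it is a small improvement rather than a deviation.
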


\begin{proof}
	Let $A\subseteq B\in {\mathcal C}$. Then $RA\subseteq RB \in {\mathcal C}$ and so $TRA \supseteq TRB$, and using order preservation of $R^{-1}$ we get  
	$SA =  R^{-1}TRA \supseteq R^{-1}TRB = SB$.  The involution property follows from 
	$R^{-1}TRR^{-1}TRA = R^{-1}TTRA = R^{-1}RA = A$. 
\end{proof}

\begin{rem}
	Clearly ${\mathcal C}$ is closed under intersections as it is the image of an order reversing quasi involution.  Using Lemma \ref{lem:extending} we may thus extend $S$ to be an order reversing quasi involution
	on $\P(X)$. \end{rem}

Let us mention that the direction of the inclusion in the quasi involution property $(i)$ can be reversed, and there is a one-to-one correspondence between order reversing quasi involutions as defined in Definition \ref{def:set-duality}, and those that one could define with the reverse inclusion,  as is apparent from the following lemma. 
\begin{lem}\label{lemma:switch-direction}
	Let $T:\P(X)\to \P(X)$ be an order reversing quasi-involution. Then the mapping $S:\P(X) \to \P(X)$ given by $SK = X\setminus (T(X\setminus K))$ satisfies \begin{enumerate}[(i)]
		\item $K\supseteq SSK$, \tabto{5.5cm} (complemented quasi involution)
		\item if $L\subseteq K$ then $SK \subseteq SL$.  \tabto{5.5cm} (order reversion)
	\end{enumerate}
\end{lem}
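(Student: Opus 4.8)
The plan is to verify the two properties of $S$ directly from the corresponding properties of $T$, using nothing more than the fact that complementation $K\mapsto X\setminus K$ is itself an order-reversing involution on $\P(X)$. Throughout, write $K^{\mathsf c}=X\setminus K$ for brevity, so that by definition $SK=T(K^{\mathsf c})^{\mathsf c}$, and note $(SK)^{\mathsf c}=T(K^{\mathsf c})$.

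For the order reversion property (ii), suppose $L\subseteq K$. Then $K^{\mathsf c}\subseteq L^{\mathsf c}$, and since $T$ is order reversing, $T(L^{\mathsf c})\subseteq T(K^{\mathsf c})$. Complementing reverses this inclusion once more, giving $T(K^{\mathsf c})^{\mathsf c}\subseteq T(L^{\mathsf c})^{\mathsf c}$, i.e. $SK\subseteq SL$, as desired.

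For the complemented quasi involution property (i), I would compute $SSK$ explicitly. Using $(SK)^{\mathsf c}=T(K^{\mathsf c})$ we get
\[
SSK=T\big((SK)^{\mathsf c}\big)^{\mathsf c}=T\big(T(K^{\mathsf c})\big)^{\mathsf c}=\big(TT(K^{\mathsf c})\big)^{\mathsf c}.
\]
Now apply the quasi involution property of $T$ to the set $K^{\mathsf c}$, namely $K^{\mathsf c}\subseteq TT(K^{\mathsf c})$. Complementing reverses the inclusion, so $\big(TT(K^{\mathsf c})\big)^{\mathsf c}\subseteq (K^{\mathsf c})^{\mathsf c}=K$, that is, $SSK\subseteq K$.

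There is essentially no obstacle here: the statement is a formal consequence of conjugating $T$ by the complementation involution, and the only thing to be careful about is tracking how many times an inclusion gets reversed in each computation. One could additionally remark that the correspondence $T\leftrightarrow S$ is a bijection, since applying the same construction to $S$ returns $T$; indeed $X\setminus S(X\setminus K)=\big(TT(K^{\mathsf c})^{\mathsf c}\big)$-type manipulation as above gives $X\setminus S(X\setminus K)=TK$. This justifies the assertion preceding the lemma that the two notions of order reversing quasi involution (with inclusion in (i) pointing either way) are equivalent.
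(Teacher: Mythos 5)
Your proof is correct and follows essentially the same route as the paper: both arguments are direct formal manipulations using that complementation is an order-reversing involution, deriving (ii) from order reversion of $T$ and (i) from the identity $SSK=\bigl(TT(X\setminus K)\bigr)^{\mathsf c}$ together with the quasi involution property applied to $X\setminus K$. Your closing remark that the construction is its own inverse is a correct (and welcome) justification of the bijection claim preceding the lemma.
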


\begin{proof}
	Indeed, for the order reversion property,  if $L\subseteq K$ then $X\setminus L \supseteq X\setminus K$ and so $T( X\setminus L)\subseteq T(X\setminus L)$, from which it follows that \[SL = X\setminus T( X\setminus L) \supseteq   X\setminus T( X\setminus K)  = SK.  \] 
	For the quasi involution property, we want to show $K\supseteq SSK$ which can be rewritten as $X\setminus K\subseteq X\setminus SSK$, which in turn means $X\setminus K\subseteq T(X\setminus SK) = TT(X\setminus K)$,
	which follows from the order reversion property given for $T$ on the subset $X\setminus K$. 	
\end{proof}

\begin{rem}
	Let us remark that if we change the quasi involution property $(i)$ in Definition \ref{def:set-duality} to be in the opposite direction, $TTK \subseteq K$, then by the above lemma the condition of being closed under intersection becomes a condition on being closed under unions, the condition of containing $X$ becomes the condition of containing $\emptyset$, and all of the claims we have proved have their obvious counterparts for this case. 
\end{rem}

As the reader will readily notice, the transform $S$ amounts to the composition of the given transform $T$ on both sides with the order reversing  involution given by $K\mapsto X\setminus K$.
Hence, we obtained a complemented \orqi $S$, namely an order reversing map satisfying $K\supseteq SSK$.  Whereas it may seem that in the specific choice of $RK = X\setminus K$ we only used that it is an order reversing involution, one quite easily checks that it is the {\em only} order reversing bijection on $\P(X)$.  However,  this construction can be generalized as follows.

\begin{lem}
	Let $T$ be an \orqi and $R$ be a complemented order reversing quasi involution. Then the composition $TRT$ is an \orqi while $RTR$ is a complemented order reversing quasi involution.
\end{lem}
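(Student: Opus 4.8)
The statement has two symmetric halves, so the plan is to prove one half carefully and then observe the other follows by the obvious dualization (passing from $\P(X)$ to itself via complementation, or simply repeating the argument with inclusions reversed). I focus on the first assertion: if $T$ is an \orqi and $R$ is a complemented order reversing quasi involution (meaning $RRK \subseteq K$ and $R$ reverses order), then $S := TRT$ is an \orqi. There are exactly two things to check: order reversion and the quasi involution inequality $K \subseteq SSK$.

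First I would dispose of order reversion, which is routine. If $L \subseteq K$, then applying $T$ gives $TK \subseteq TL$ (order reversion of $T$); applying $R$ gives $RTL \subseteq RTK$ (order reversion of $R$); applying $T$ once more gives $TRTK \subseteq TRTL$, i.e. $SK \subseteq SL$. So $S$ reverses order. The heart of the matter is the quasi involution inequality, and this is the step I expect to be the main obstacle, because one must carefully thread together three inequalities going in the ``wrong'' direction for $R$ with the ones for $T$. Write $SSK = TRT(TRTK) = TRTTRTK$. The plan is to bound this from below by $K$ using the chain: start from $R T T R T K$; since $TTL \supseteq L$ for $L = RTK$, we have $TTRTK \supseteq RTK$, and applying the order-reversing $R$ flips this to $R(TTRTK) \subseteq R R T K$; now use the complemented quasi involution $RRM \subseteq M$ with $M = TK$ to get $RRTK \subseteq TK$; chaining, $R(TTRTK) \subseteq TK$. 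Finally apply the order-reversing $T$ to this inclusion: $T(TK) \subseteq T(RTTRTK) = SSK$, and since $TTK \supseteq K$ by the quasi involution property of $T$, we conclude $K \subseteq TTK \subseteq SSK$, as desired.

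For the second assertion, that $RTR$ is a complemented order reversing quasi involution, order reversion is again immediate by composing three order-reversing maps. For the complemented quasi involution inequality $RTRRTRK \subseteq K$: apply $TRRM \subseteq$ something —- more precisely, from $RRM \subseteq M$ with $M = TRK$ we get $RR(TRK) \subseteq TRK$; apply the order-reversing $T$ to obtain $T(TRK) \subseteq T(RRTRK)$; use $TTN \supseteq N$ with $N = RK$, giving $RK \subseteq TTRK \subseteq T(RRTRK)$; now apply the order-reversing $R$ to flip this: $R(T(RRTRK)) \subseteq R(RK) = RRK \subseteq K$, where the last step is the complemented property of $R$ once more. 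Reading off the left-hand side, $RTRRTRK \subseteq K$, which is exactly what we wanted. The only real care needed throughout is bookkeeping the directions of the inclusions as each map is applied; no additional hypotheses (such as $R$ being defined on all of $\P(X)$, which is anyway given) are required.
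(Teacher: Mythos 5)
Your proof is correct and follows essentially the same route as the paper: the same chain of implications $TT(RTK)\supseteq RTK \implies RTTRTK\subseteq RRTK\subseteq TK \implies TRTTRTK\supseteq TTK\supseteq K$ for the first half, and the mirrored chain for $RTR$. The direction of each inclusion is tracked correctly throughout, so there is nothing to add.
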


\begin{proof}
	Assume that $T,R$ are order reversing and that for any $K\subseteq X$ we have $K\subseteq TTK$ and $RRK\subseteq K$.  It can be easily checked that a composition of an odd number of order reversing transforms remains order reversing. 
	To see that $TRT$ is a quasi involution note that
	\[TT(RTK) \supseteq RTK  \implies     RTTRTK \subseteq RRTK\subseteq TK   \implies    TRTTRTK\supseteq TTK \supseteq K.  \ \]
	Similarly, we get that the composition $RTR$ is a complemented quasi involution
	\[ RR (TR K) \subseteq TRK  \implies   
	TRRTRK \supseteq TT RK \supseteq RK  \implies   
	RTRRTRK \subseteq RRK \subseteq K.
	\]\end{proof}

\subsection{Restrictions}

If an \orqi is defined on a class $\C\subseteq\P(X)$ which is closed under intersections, we can define a new \orqi by restricting the original transform to some fixed subset. More precisely, we have the following proposition. 

\begin{prop}\label{prop:intersecting}
	Let $T:\C\to \C$ be an order reversing quasi involution on $\C$,  where $\C\subseteq \P(X)$ is closed under intersections.  Then for any fixed $M_0\in \C$, denoting $\C_{0}=\{K\cap M_0: K\in \C\}\subseteq \C$,  we have that $S:\C_{0}\to \C_{0}$ defined by $SK=TK\cap M_0$ is an order reversing quasi involution.
\end{prop}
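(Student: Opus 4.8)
The plan is to verify the two defining properties of an order reversing quasi involution for $S$ on $\C_0$, using only that $T$ is an \orqi on $\C$, that $\C$ is closed under intersections, and that $M_0\in\C$. First I would record that $\C_0\subseteq\C$: for any $K\in\C$, $K\cap M_0\in\C$ since $\C$ is closed under intersections. In particular $S$ is well-defined as a map into $\C$, and its image lands in $\C_0$ because $SK = TK\cap M_0$ with $TK\in\C$, so $SK$ is of the form ``element of $\C$ intersected with $M_0$''; and conversely every element of $\C_0$ arises as $SL$ for a suitable $L$ (one may take $L = T(K\cap M_0)$, so that $SL = TT(K\cap M_0)\cap M_0 \supseteq (K\cap M_0)$, giving an element of $\C_0$ containing the given one — but for the proposition we only need that $S$ maps $\C_0$ to $\C_0$, which is clear).

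Order reversion is the easy step. Suppose $K, L\in\C_0$ with $L\subseteq K$. Then $TK\subseteq TL$ by order reversion of $T$ on $\C$, and intersecting both sides with $M_0$ preserves the inclusion, so $SK = TK\cap M_0\subseteq TL\cap M_0 = SL$, as required.

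For the quasi involution property I need $K\subseteq SSK$ for $K\in\C_0$. Write $K = K'\cap M_0$ with $K'\in\C$; note $K\subseteq M_0$. I would compute $SSK = S(TK\cap M_0) = T(TK\cap M_0)\cap M_0$. Since $TK\cap M_0\subseteq TK$, order reversion of $T$ gives $T(TK\cap M_0)\supseteq TTK\supseteq K$, where the last inclusion is the quasi involution property of $T$ applied to $K$ (valid since $K\in\C$). Hence $T(TK\cap M_0)\cap M_0\supseteq K\cap M_0 = K$, i.e. $K\subseteq SSK$. This completes the verification.

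The only mild subtlety — and the step I would be most careful about — is making sure every object to which I apply $T$ genuinely lies in $\C$: this is where closure of $\C$ under intersections and the hypothesis $M_0\in\C$ are both used (to know $TK\cap M_0\in\C$ and $K\cap M_0\in\C$), so that the \orqi axioms for $T$ are legitimately available at each invocation. There is no real obstacle beyond this bookkeeping; the argument is a direct unwinding of the two axioms together with the monotonicity of $A\mapsto A\cap M_0$.
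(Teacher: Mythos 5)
Your proof is correct and follows essentially the same route as the paper's: order reversion via monotonicity of $A\mapsto A\cap M_0$, and the quasi involution property from $TK\cap M_0\subseteq TK$ together with $T(TK\cap M_0)\supseteq TTK\supseteq K$ and $K\subseteq M_0$. The extra bookkeeping you include (checking $\C_0\subseteq\C$ and that $S$ maps $\C_0$ into $\C_0$) is a sensible addition but does not change the argument.
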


\begin{proof}
	To see that $S$ is order reversing, consider $K,L \in \C_0$ such that $K\subseteq L$. Then since $T$ is order reversing we get that $TK\supseteq TL$.  Since intersecting with $M_0$ preserves that inclusion, we get that $S$ is indeed order reversing. 
	To see that for any $K\in \C_0$ we have $K\subseteq SSK$ note that $SK=TK\cap M_0\subseteq TK$. Since $\C$ is closed under intersections we may apply $T$ to both sides again and by the order reversing property of $T$ we get $T(TK\cap M_0) \supseteq TTK\supseteq K$.  Finally note that since $K\in \C_0$ it follows that $K\subseteq T(TK\cap M_0)\cap M_0=SSK$.
\end{proof}

\begin{rem}
	Instead of a direct proof one can use the cost associated with $T$ and restrict it to  $M_0\times M_0$. Such a cost exists by Lemma \ref{lem:extending} and Theorem \ref{thm:duality-induced-by-cost}.
\end{rem}

One can use Proposition \ref{prop:intersecting} to construct \orqis.   For example, considering the polarity transform on subsets of $\RR^n$, and letting $M_0 = RB_2^n$, a multiple of the Euclidean ball, the new transform, which is given by 
\[ SK = K^\circ \cap RB_2^n,\]
is an \orqi~ on subsets of $RB_2^n$. Its image, namely the class on which it is an involution, is the class of all convex subsets of $RB_2^n$ which include $\frac{1}{R}B_2^n$. If we instead chose $M_0$ to be a linear subspace, we would simply get polarity within the subspace. Taking $M_0$ to be some convex set gives the setting of Cordero-Erausquin's conjecture \cite{Cordero}.

Another natural way to restrict an \orqi on $X$ to a subset $Y$ it to look at a cost function on $X\times X$ inducing the \orqi and restrict it to $Y\times Y$. More precisely, given a space $X$ and a cost function $c:X\times X \to (-\infty , \infty] $ it generates a $c$-class $\mathcal{C}_X$ (image of the transform). Fix $Y\subseteq X$ and consider a cost function $\tilde{c}:Y\times Y \to (-\infty , \infty ]$ such that $\tilde{c}(x,y)=c(x,y)$ for all $x,y\in Y$. We want to study the $\tilde{c}$-class $\mathcal{C}_Y$ and it relation to $\mathcal{C}_X$.

\begin{lem}\label{lem:1}
	Let $X,Y,c, \tilde{c}$ as above. Assume that $Y\in \mathcal{C}_X$. It follows that 
	\[ \mathcal{C}_Y=\{ B\cap Y: \, Y^c \subseteq B \in \mathcal{C}_X\}.\]
\end{lem}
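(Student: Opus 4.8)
The plan is to reduce everything to one explicit description of the $\tilde c$-transform and then exploit the involution property of cost transforms on their image (Lemma \ref{lem:orqi-is-duality-on-image}) together with the hypothesis $Y\in \mathcal{C}_X$.

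First I would record the basic observation that, since $\tilde c$ agrees with $c$ on $Y\times Y$, the $\tilde c$-dual of a set $K\subseteq Y$ is simply obtained by intersecting the $c$-dual with $Y$: indeed
\[
K^{\tilde c}=\{y\in Y:\ \inf_{x\in K}\tilde c(x,y)\ge 0\}=\{y\in Y:\ \inf_{x\in K}c(x,y)\ge 0\}=K^c\cap Y .
\]
Consequently $\mathcal{C}_Y=\{K^c\cap Y:\ K\subseteq Y\}$, and it remains to match this with the right-hand side.

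For the inclusion $\mathcal{C}_Y\subseteq \{B\cap Y:\ Y^c\subseteq B\in \mathcal{C}_X\}$, take any $K\subseteq Y$ and put $B=K^c$. Then $B\in \mathcal{C}_X$ since it lies in the image of the $c$-transform, and order reversion applied to $K\subseteq Y$ gives $Y^c\subseteq K^c=B$; hence $K^{\tilde c}=B\cap Y$ is of the required form. For the reverse inclusion, let $B\in \mathcal{C}_X$ with $Y^c\subseteq B$. Because $B$ is in the image of the $c$-transform, Lemma \ref{lem:orqi-is-duality-on-image} yields $B^{cc}=B$. Applying order reversion to $Y^c\subseteq B$ gives $B^c\subseteq Y^{cc}$, and since $Y\in \mathcal{C}_X$ we have $Y^{cc}=Y$, so $B^c\subseteq Y$. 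Thus $K:=B^c$ is a genuine subset of $Y$, and
\[
K^{\tilde c}=K^c\cap Y=B^{cc}\cap Y=B\cap Y,
\]
so $B\cap Y\in \mathcal{C}_Y$. This proves both inclusions.

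I do not expect a real obstacle here; the only delicate point is that the step $B^c\subseteq Y$ — which is what allows $B^c$ to serve as a preimage inside $\P(Y)$ — depends essentially on the assumption $Y\in \mathcal{C}_X$ (equivalently $Y^{cc}=Y$), and I would make sure to flag that this hypothesis is exactly what is being used and cannot be dropped.
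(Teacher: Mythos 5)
Your proof is correct and follows essentially the same route as the paper: both directions hinge on the identity $K^{\tilde c}=K^c\cap Y$ for $K\subseteq Y$, on order reversion to get $Y^c\subseteq K^c$ (resp.\ $B^c\subseteq Y^{cc}=Y$), and on the involution property $B^{cc}=B$ for $B\in\mathcal{C}_X$. Your remark that $Y^{cc}=Y$ is exactly where the hypothesis $Y\in\mathcal{C}_X$ enters matches the paper's own use of that assumption.
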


\begin{proof}%
	We will first show the inclusion ``$\subseteq$". 
	Let $A\in \mathcal{C}_Y$. This means that $A^{\tilde{c}\tilde{c}}=A\subseteq Y$, but
	\[ A^{\tilde{c}} = \cap_{x\in A} \{y\in Y: \, \tilde{c}(x,y)\ge 0\} = \cap_{x\in A} \{y\in Y: \, c(x,y)\ge 0\} = A^c\cap Y\]
	and further
	\[ A^{\tilde{c}\tilde{c}}= \cap_{z\in A^{\tilde{c}}} \{w\in Y: \, \tilde{c}(z,w)\ge 0\} =\cap_{z\in A^{\tilde{c}}\cap Y} \{w\in Y: \, c(z,w)\ge 0\} = (A^c \cap Y)^c\cap Y
	\]
	But the set $B=(A^c \cap Y)^c\in \mathcal{C}_X$. Moreover, if $y\in Y^c$ then $y\in (A^c \cap Y)^c$. Indeed, we have that $Y \supseteq A^c\cap Y$ and hence by order-reversion $Y^c \subseteq (A^c\cap Y)^c=B$.
	
	In the other direction, consider $B\in \mathcal{C}_X$ such that $Y^c\subseteq B$. We want to show that $B\cap Y \in \mathcal{C}_Y$. In fact, it turns out that $B\cap Y=(B^c)^{\tilde{c}}$ since $Y^c\subseteq B$ we have by order-reversion that $B^c \subseteq Y^{cc}=Y$, as $Y\in \mathcal{C}_X$ it is well defined. It follows that 
	\begin{align*}
	( B^c)^{\tilde{c}} &=\bigcap_{x\in B^c} \{y\in Y: \, \tilde{c}(x,y)\ge 0\} = \bigcap _{x\in B^c\cap Y} \{ y\in Y:\, c(x,y)\ge 0\} = (B^c\cap Y)^c\cap Y \\ &= B^{cc}\cap Y = B\cap Y.
	\end{align*}
	This completes the proof.
\end{proof}

We remark that in the case when $Y\notin \mathcal{C}_X$, i.e. when $Y^{cc}\neq Y$, it is not known if the statement still holds.

\begin{fact}
	$A\in \mathcal{C}_Y$ does not imply that $A\in \mathcal{C}_X$. Take for example unconditional bodies on $X=\R^n$ and $Y=(\R_+)^n$.
\end{fact}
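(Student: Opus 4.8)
The statement to prove is the final Fact: membership in $\mathcal{C}_Y$ (the $\tilde c$-class on a subset $Y\subseteq X$) does not imply membership in $\mathcal{C}_X$, with the suggested witness being $X=\R^n$ with the unconditional cost $c(x,y)=1-\sum_i|x_i||y_i|$ and $Y=(\R_+)^n$. So this is a counterexample, not a theorem, and the ``proof'' is really the verification that the example works. The plan is to (1) identify $\mathcal C_X$ and the restricted cost $\tilde c$ on $Y\times Y$, (2) compute $\mathcal C_Y$ explicitly, (3) exhibit a concrete $A\in\mathcal C_Y$ that is visibly not in $\mathcal C_X$.

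**Step 1: identify the ambient class.** From the Unconditional-bodies example, $\mathcal C_X$ is exactly the class of unconditional closed convex bodies in $\R^n$ (sets invariant under all sign changes of coordinates, closed, convex, containing $0$), plus the degenerate members $\{0\}$ and $\R^n$. Now restrict the cost to $Y\times Y=(\R_+)^n\times(\R_+)^n$: since on the positive orthant $|x_i|=x_i$, we simply have $\tilde c(x,y)=1-\sum_i x_iy_i = 1-\iprod{x}{y}$, i.e. $\tilde c$ is (the restriction of) the classical polarity cost. So for $A\subseteq (\R_+)^n$,
\[
A^{\tilde c} = \{ y\in(\R_+)^n : \iprod{x}{y}\le 1\ \forall x\in A\} = A^\circ \cap (\R_+)^n,
\]
the polar dual intersected with the positive orthant. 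Applying Lemma \ref{lem:1} (noting $Y=(\R_+)^n$ is itself unconditional? — no, it is not; so I would instead argue directly rather than invoking Lemma \ref{lem:1}, whose hypothesis $Y\in\mathcal C_X$ fails here). Direct route: $\mathcal C_Y$ is the image of $A\mapsto A^{\tilde c}$, and by the same reasoning as in the polarity example, $A^{\tilde c\tilde c}$ is the smallest closed convex subset of $(\R_+)^n$ containing $A\cup\{0\}$ that is ``downward-and-relatively closed'' in the appropriate sense; in any case $\mathcal C_Y$ contains every closed convex subset of $(\R_+)^n$ containing the origin.

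**Step 2: produce the witness.** Take $n=2$ for concreteness and let $A = \mathrm{conv}\{(0,0),(1,0),(0,1)\}$, the standard simplex in $(\R_+)^2$. It is a closed convex subset of $(\R_+)^2$ containing $0$, hence $A\in\mathcal C_Y$ (one checks $A^{\tilde c\tilde c}=A$: $A^{\tilde c}=A^\circ\cap(\R_+)^2 = \{y\ge 0: y_1\le 1, y_2\le1\}=[0,1]^2$, and $([0,1]^2)^{\tilde c} = [0,1]^{2\,\circ}\cap(\R_+)^2$, which is again the simplex $A$). But $A\notin\mathcal C_X$: $A$ is not unconditional — e.g. $(1,0)\in A$ while $(-1,0)\notin A$ — and it is not $\{0\}$ nor $\R^2$, so it is not in the unconditional class. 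This contradicts $A\in\mathcal C_X$, completing the argument. The general $n$ works identically with the standard simplex $\mathrm{conv}\{0,e_1,\dots,e_n\}$.

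**Main obstacle.** There is no deep obstacle here; the only thing to be careful about is that Lemma \ref{lem:1} does \emph{not} apply directly since its hypothesis $Y\in\mathcal C_X$ fails (the positive orthant is not unconditional), so the computation of $\mathcal C_Y$ must be done by hand from the definition rather than quoted. The bookkeeping — confirming $A^{\tilde c\tilde c}=A$ so that $A$ genuinely lies in the image class $\mathcal C_Y$, and confirming $A$ violates unconditionality — is routine. I would present the $n=2$ simplex as the clean witness and remark that it generalizes.
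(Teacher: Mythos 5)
Your verification is correct and follows exactly the example the paper points to (the paper itself offers no written proof beyond naming the unconditional-cost class on $X=\R^n$ and $Y=(\R_+)^n$): restricting $c(x,y)=1-\sum_i|x_i||y_i|$ to the positive orthant gives the polarity cost, the simplex $\conv\{0,e_1,\dots,e_n\}$ is checked to satisfy $A^{\tilde c\tilde c}=A$, and it is visibly not unconditional. You are also right that Lemma \ref{lem:1} is inapplicable here since $(\R_+)^n\notin\mathcal C_X$, so the direct computation you give is the correct route.
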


\begin{fact}
	Given $\tilde{c}:Y\times Y \to (-\infty ,\infty]$ which generates $\mathcal{C}_Y$, there exists $X \supseteq Y$ and $c:X\times X \to (-\infty ,\infty]$ such that $\mathcal{C}_Y=\mathcal{C}_X$.
\end{fact}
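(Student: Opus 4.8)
The statement is: given $\tilde c : Y \times Y \to (-\infty,\infty]$ generating a $c$-class $\mathcal C_Y$, there is a larger space $X \supseteq Y$ and a cost $c : X\times X \to (-\infty,\infty]$ such that $\mathcal C_Y = \mathcal C_X$. The natural strategy is to build $X$ by adjoining to $Y$ exactly one new point $z$, placed so that it becomes the ``$c$-dual of the whole space'' — i.e.\ so that $\{z\} = X^c$ and more to the point so that $z$ lies in $A^c$ (as a set in $X$) for \emph{every} $A \subseteq Y$. Concretely, set $X = Y \cup \{z\}$ with $z\notin Y$, and define $c$ on $X\times X$ by $c|_{Y\times Y} = \tilde c$, $c(z,y) = c(y,z) = +\infty$ for all $y\in Y$ (equivalently $\ge 0$, using the two-value normalization of Theorem~\ref{thm:duality-induced-by-cost}), and $c(z,z) = +\infty$ as well. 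Since adding $z$ to a set $A\subseteq Y$ only adds the constraints $c(z,y)\ge 0$, which are automatically satisfied, the presence or absence of $z$ in a subset of $Y$ does not affect which points of $Y$ land in its dual.

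First I would record the basic computation of the transform $K\mapsto K^c$ on $X$. For $A\subseteq Y$ we get $A^c = \{w \in X : c(a,w)\ge 0 \ \forall a\in A\}$; since every $a\in A\subseteq Y$ has $c(a,z) = +\infty \ge 0$, we always have $z\in A^c$, and the remaining elements of $A^c\cap Y$ are exactly $A^{\tilde c}$ (the dual computed inside $Y$). Hence $A^c = A^{\tilde c}\cup\{z\}$ for every $A\subseteq Y$. For a set $B\subseteq X$ containing $z$, write $B = A\cup\{z\}$ with $A = B\cap Y$; then $B^c = A^c = A^{\tilde c}\cup\{z\}$ as well, because the constraint from $z$ (namely $c(z,w)\ge 0$) is vacuous. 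So on subsets of $X$ the transform factors through the map $B\mapsto B\cap Y$ followed by $\tilde c$-duality in $Y$ followed by re-adjoining $z$.

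Next I would identify the image class $\mathcal C_X = \{B^c : B\subseteq X\}$. By the previous paragraph every value $B^c$ has the form $C\cup\{z\}$ where $C = A^{\tilde c}$ for some $A\subseteq Y$, i.e.\ $C\in \mathcal C_Y$; conversely every $C\in\mathcal C_Y$ arises this way (take $B = A^{\tilde c}$ with $C = A^{\tilde c\tilde c} = C$, or simply note $C\cup\{z\} = C^c$ since $C\subseteq Y$). Thus $\mathcal C_X = \{C\cup\{z\} : C\in\mathcal C_Y\}$, which is in obvious order-preserving bijection with $\mathcal C_Y$ via $C\mapsto C\cup\{z\}$, $D\mapsto D\cap Y$. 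If the intended reading of ``$\mathcal C_Y = \mathcal C_X$'' is literal equality of the two families (not merely canonical isomorphism), one removes the harmless extra point: instead of adjoining a new point, one may check whether $Y$ already has a point playing the role of $z$, or, cleaner, replace the disjoint-union construction by a quotient/relabeling so that the adjoined point is absorbed — but I expect the cleanest and intended statement is up to this trivial identification, and I would state the conclusion as ``$\mathcal C_X = \{C\cup\{z\}: C\in\mathcal C_Y\}$, so $\mathcal C_X$ and $\mathcal C_Y$ coincide after deleting the single point $z$ common to all members of $\mathcal C_X$.''

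The only real subtlety — and thus the main obstacle — is this bookkeeping about whether one wants literal equality or equality up to the ubiquitous point $z$; the order-theoretic content is entirely trivial once the cost is written down. A secondary point to verify is that $c$ is a legitimate symmetric cost into $(-\infty,\infty]$ (it is, by construction) and that $Y\in\mathcal C_X$ so that, if desired, Lemma~\ref{lem:1} applies and gives $\mathcal C_Y = \{B\cap Y : Y^c\subseteq B\in\mathcal C_X\}$ as a consistency check: here $Y^c = \{z\}$, and $\{B\cap Y : z\in B\in\mathcal C_X\} = \{C : C\in\mathcal C_Y\} = \mathcal C_Y$, confirming everything fits together.
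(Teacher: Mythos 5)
Your construction is the mirror image of the paper's, and the difference in sign is exactly what separates the literal equality $\mathcal{C}_Y=\mathcal{C}_X$ asserted in the Fact from the weaker ``equality after deleting $z$'' that you actually obtain. The paper adjoins new points and declares them \emph{incompatible} with everything: it sets $c=\tilde c$ on $Y\times Y$ and $c\equiv -1$ otherwise. Then no new point ever enters a dual set, and for nonempty $K\subseteq Y$ one gets $K^c=K^{\tilde c}$ as literally the same subset of $Y$, so the two image classes coincide on the nose (apart from the extremal elements $\emptyset^c=X$ and the empty duals of sets meeting $X\setminus Y$, a caveat the paper itself glosses over). You instead make the new point $z$ \emph{compatible} with everything, $c(z,\cdot)=+\infty$, which forces $z$ into every dual set; as you correctly compute, $\mathcal{C}_X=\{C\cup\{z\}:C\in\mathcal{C}_Y\}$. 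This is order-isomorphic to $\mathcal{C}_Y$ but disjoint from it as a family of sets (every member of $\mathcal{C}_X$ contains $z\notin Y$, every member of $\mathcal{C}_Y$ lies in $Y$), so the statement as written is not proved. You flag this as ``the only real subtlety'' but leave it unresolved; the resolution is the one-line sign flip above, not a relabeling or quotient.

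Two smaller points. Your closing consistency check is incorrect within your own construction: $Y^c=Y^{\tilde c}\cup\{z\}$, which equals $\{z\}$ only when $Y^{\tilde c}=\emptyset$, and $Y\notin\mathcal{C}_X$ since every member of $\mathcal{C}_X$ contains $z$, so Lemma \ref{lem:1} does not apply there. On the other hand, the core computation --- that the transform on $X$ factors through $B\mapsto B\cap Y$ followed by $\tilde c$-duality inside $Y$ --- is correct and is the same mechanism that drives the paper's proof.
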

\begin{proof}
	Define $c(x,y)=\tilde{c}(x,y)$ if $x,y\in Y$ and $-1$ otherwise. Then, for any $K\subseteq X$ we have
	\[ K^c = \cap _{x\in K} \{y \in X: \, c(x,y)\ge 0\} =\cap_{x\in K\cap Y} \{y \in Y: \, c(x,y)\ge 0\} = (K\cap Y)^{\tilde{c}}.\]
\end{proof}

The next fact is closely related to Lemma \ref{lem:1}, however for this direction one does not need to assume that $Y\in \mathcal{C}_X$.

\begin{fact}
	For any cost $c:X\times X\to (-\infty, \infty]$ and for any $Y\subseteq X$ we have that every set $A\in \mathcal{C}_Y$ can be written as $B\cap Y$, where $B\in \mathcal{C}_X$.
\end{fact}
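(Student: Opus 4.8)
The statement is almost immediate once we recall what $\mathcal{C}_Y$ is: it is the image of the $\tilde c$-transform on $\P(Y)$, where $\tilde c = c|_{Y\times Y}$. So fix $A\in\mathcal{C}_Y$; since $A$ lies in the image, $A = D^{\tilde c}$ for some $D\subseteq Y$ (for instance $D = A^{\tilde c}$ works by the quasi-involution/involution-on-image property, but any preimage will do). The plan is to exhibit the desired $B$ as the $c$-dual, computed in the ambient space $X$, of this same set $D$.

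First I would set $B = D^c = \{y\in X:\ \forall x\in D,\ c(x,y)\ge 0\}$. Since $D\subseteq X$, this $B$ is by definition an element of $\mathcal{C}_X$. Next I would compute $B\cap Y$ directly: for $y\in Y$ we have $y\in B$ iff $c(x,y)\ge 0$ for all $x\in D$, and here the crucial (and only) observation is that every such pair $(x,y)$ has $x\in D\subseteq Y$ and $y\in Y$, hence lies in $Y\times Y$, where $c$ and $\tilde c$ coincide. Therefore $y\in B\cap Y$ iff $\tilde c(x,y)\ge 0$ for all $x\in D$, i.e.\ iff $y\in D^{\tilde c} = A$. This gives $B\cap Y = A$ with $B\in\mathcal{C}_X$, as required.

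There is no real obstacle here; the one point to state carefully is that the preimage $D$ of $A$ under the $\tilde c$-transform may be taken inside $Y$ (which is automatic, as the $\tilde c$-transform acts on $\P(Y)$), so that the costs involved are genuinely the restricted ones. Note also that, unlike Lemma~\ref{lem:1}, this direction needs no hypothesis on $Y$ (in particular not $Y\in\mathcal{C}_X$), precisely because we never need $Y$ itself to be $c$-stable — we only intersect an element of $\mathcal{C}_X$ with $Y$.
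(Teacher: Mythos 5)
Your proof is correct, and it takes a slightly different (and more direct) route than the paper's. The paper chooses the witness $B=A^{cc}$, gets $A\subseteq B\cap Y$ from $A\subseteq A^{cc}$, and proves the reverse inclusion by contradiction, using the auxiliary inclusion $A^{\tilde c}\subseteq A^c$. You instead write $A=D^{\tilde c}$ with $D=A^{\tilde c}\subseteq Y$ and take $B=D^c=(A^{\tilde c})^c$, after which $B\cap Y=A$ follows from a one-line computation: for $x\in D\subseteq Y$ and $y\in Y$ the costs $c$ and $\tilde c$ agree, so $\{y\in Y:\ \forall x\in D,\ c(x,y)\ge 0\}=D^{\tilde c}$. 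Note that your $B$ and the paper's need not coincide: since $A^{\tilde c}=A^c\cap Y\subseteq A^c$, one has $(A^{\tilde c})^c\supseteq A^{cc}$, possibly strictly; both are legitimate elements of $\mathcal{C}_X$ whose intersection with $Y$ is $A$. What your version buys is the elimination of the contradiction argument and a cleaner identification of exactly where the hypothesis $c|_{Y\times Y}=\tilde c$ enters; what the paper's version buys is the canonical choice $B=A^{cc}$, i.e.\ the $\mathcal{C}_X$-envelope of $A$, which dovetails with the formula $\mathcal{C}_Y=\{B\cap Y:\ Y^c\subseteq B\in\mathcal{C}_X\}$ of Lemma \ref{lem:1}. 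Your closing remark that no hypothesis on $Y$ (such as $Y\in\mathcal{C}_X$) is needed is also accurate and matches the paper's framing of this fact.
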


\begin{proof}
	Let $A\in \mathcal{C}_Y$, i.e. $A=A^{\tilde{c}\tilde{c}}$. Then let $B=A^{cc}$ and we will show that $B\cap Y=A$.
	For the first inclusion, note that $A\subseteq A^{cc}$ and hence, since $A\subseteq Y$ and intersecting both sides with $Y$ we are done.
	
	In the other direction, let $y\in B\cap Y$. We will show that $y \in A\subseteq Y$.
	
	Assume towards a contradiction that $y\notin A$. Then, $y\notin A^{\tilde{c}\tilde{c}}$ and hence there exists some $a \in A^{\tilde{c}}$ such that $c(a,y)<0$. But, we have that $A^{\tilde{c}} \subseteq A^c$ as if $z\in A^{\tilde{c}}$ then for all $w\in A$ we have $\tilde{c}(z,w)\ge 0$ and $c|_Y=\tilde{c}$ hence $c(z,w)\ge0$ for all $w\in A$. Hence, we get the contradiction with $y\in A^{cc}\cap Y$ as we showed that there is some $a\in A^c$ with $c(a,y)<0$ and therefore $t\notin A^{cc}$.
\end{proof}

\bibliographystyle{plain}
\bibliography{ref_zoo}

\smallskip \noindent
School of Mathematical Sciences, Tel Aviv University, Tel Aviv 69978, Israel  
\smallskip \noindent

{\it e-mail}: shiri@tauex.tau.ac.il\\
{\it e-mail}: shaysadovsky@mail.tau.ac.il\\
{\it e-mail}: kasiawycz@outlook.com\\

\end{document}